\documentclass{amsart}
\usepackage{graphicx,pstricks}
\usepackage{amsmath, amsthm, amssymb}
\usepackage{wasysym}
\usepackage[x11names]{xcolor}
\usepackage[shortlabels]{enumitem}
\usepackage{tikz}
\usetikzlibrary{calc}
\usepackage{dsfont}
\usepackage{caption}
\usepackage[hidelinks]{hyperref}

\newcommand{\R}{\mathbb{R}}
\newcommand{\Z}{\mathbb{Z}}

\newcommand{\bP}{\mathbb{P}}

\definecolor{cb_blue}{RGB}{0,119,187}
\definecolor{cb_teal}{RGB}{0,153,136}
\definecolor{cb_magenta}{RGB}{238,51,119}
\definecolor{cb_orange}{RGB}{238, 119, 51}

\newtheorem{theorem}{Theorem}[section]
\newtheorem{lemma}{Lemma}[section]
\newtheorem{cor}{Corollary}[section]

\theoremstyle{definition}
\newtheorem{example}{Example}[section]
\newtheorem{definition}{Definition}[section]
\newtheorem{remark}{Remark}[section]

\title{Heat kernel estimates on book-like graphs}
\author{Emily Dautenhahn}
\thanks{Partially supported by NSF grant DMS-2054593.}
\address{Department of Mathematics and Statistics, Murray State University}
\author{Laurent Saloff-Coste}
\thanks{Partially supported by NSF grants DMS-2054593 and DMS-2343868.}
\address{Department of Mathematics, Cornell University}
\subjclass[2020]{Primary 60J10, 60G50, 35K08}
\keywords{heat kernel, random walks on lattices, Markov chains, glued graphs}

\begin{document}

\begin{abstract}
In this paper, we prove two-sided heat kernel estimates on what we call ``book-like'' graphs. These are graphs consisting of pieces that satisfy the parabolic Harnack inequality that are glued together in a sufficiently nice way over a possibly infinite set of vertices. The prototypical example is gluing a copy of the square four-dimensional lattice $\Z^4,$ a copy of $\Z^5$, and a copy of $\Z^6$ by identifying their $x_1$-axes and taking the lazy simple random walk on this glued graph. Our results are flexible enough to handle perturbations of this example, for instance by adding diagonals to one of the lattices  or a few extra vertices/edges. 
\end{abstract}

\maketitle

\section{introduction}

The goal of this article is to obtain heat kernel estimates for a certain class of graphs which can be thought of as sufficiently nice pieces (``pages'') glued together in a sufficiently nice way via a gluing set (a ``spine''). The results we obtain can handle the case of gluing pieces satisfying the parabolic Harnack inequality via a finite set of vertices (the discrete version of some of the results of Grigor'yan and Saloff-Coste in \cite{lsc_ag_ends}), as well as gluing such graphs via an infinite set of vertices--under a specific set of hypotheses. The only existing work in a similar vein to this latter situation is work of Grigor'yan and Ishiwata \cite{ag_ishiwata}, which considers gluing copies of $\R^n$ via a paraboloid of revolution. While the hypotheses we make about the gluing set of vertices in this paper are fairly restrictive, they are different in flavor from those of \cite{ag_ishiwata}, and our results do not require precise symmetry. For example, results which apply to gluing lattices $\Z^d$ equally apply to gluing graphs that are lattice-like (e.g. quasi-isometric to lattices). The present paper can be seen as a culmination of our previous papers \cite{ed_lsc_trans, FK_glue}.

The prototypical example the reader should keep in mind throughout this paper is that of gluing lattices via a lower-dimensional lattice. For instance, as in Figure \ref{fig:lattice_ex} consider gluing a copy of $\Z^4$ (the four-dimensional square lattice), a copy of $\Z^5$, and a copy of $\Z^6$ by identifying their $x_1$-axes.

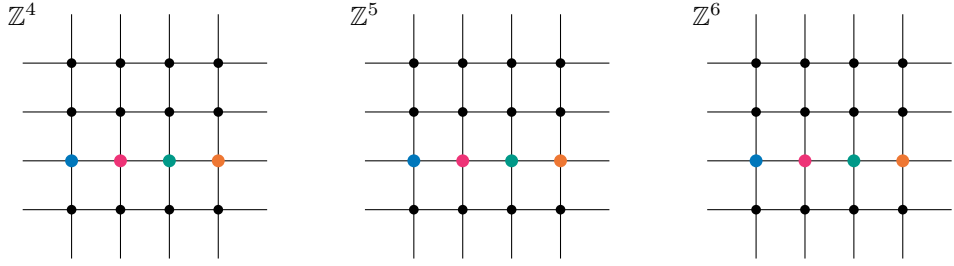
\begin{figure}[h]
\centering
\begin{tikzpicture}[scale=.65]
\foreach \x in {1,...,4} {
    \foreach \y in {1,...,4} {
         \node at (\x,\y) [circle, fill=black, inner sep=0.3ex] {};
        }
    }
    
    \foreach \x in {1,...,4} {
        \draw (\x,0)--(\x,5);
    }

        \foreach \y in {1,...,4} {
        \draw (0,\y)--(5,\y);
    }

 \foreach \x in {8,...,11} {
    \foreach \y in {1,...,4} {
         \node at (\x,\y) [circle, fill=black, inner sep=0.3ex] {};
        }
    }
    
        \foreach \x in {8,...,11} {
        \draw (\x,0)--(\x,5);
    }

        \foreach \y in {1,...,4} {
        \draw (7,\y)--(12,\y);
    }
    
     \foreach \x in {15,...,18} {
    \foreach \y in {1,...,4} {
         \node at (\x,\y) [circle, fill=black, inner sep=0.3ex] {};
        }
    }
    
        \foreach \x in {15,...,18} {
        \draw (\x,0)--(\x,5);
    }
    
        \foreach \y in {1,...,4} {
        \draw (14,\y)--(19,\y);
    }
    
      \node at (1,2) [circle, fill=cb_blue, inner sep = 0.4ex] {};
       \node at (8,2) [circle, fill=cb_blue, inner sep = 0.4ex] {};
        \node at (15,2) [circle, fill=cb_blue, inner sep = 0.4ex] {}; 
       \node at (2,2) [circle, fill=cb_magenta, inner sep = 0.4ex] {};
       \node at (9,2) [circle, fill=cb_magenta, inner sep = 0.4ex] {};
        \node at (16,2) [circle, fill=cb_magenta, inner sep = 0.4ex] {}; 
           \node at (3,2) [circle, fill=cb_teal, inner sep = 0.4ex] {};
       \node at (10,2) [circle, fill=cb_teal, inner sep = 0.4ex] {};
        \node at (17,2) [circle, fill=cb_teal, inner sep = 0.4ex] {}; 
        \node at (4,2) [circle, fill=cb_orange, inner sep = 0.4ex] {};
       \node at (11,2) [circle, fill=cb_orange, inner sep = 0.4ex] {};
        \node at (18,2) [circle, fill=cb_orange, inner sep = 0.4ex] {}; 
        \node at (0,5) {$\Z^4$};
        \node at (7,5) {$\Z^5$};
        \node at (14,5) {$\Z^6$};
\end{tikzpicture}
\captionsetup{justification=centering}
\caption{Adding extra dimensions to the figure as necessary, glue copies of $\Z^4, \Z^5, \Z^6$ by identifying their $x_1$-axes: all three vertices of each color shown are the same vertex.}\label{fig:lattice_ex}
\end{figure}

On this glued graph, take the lazy simple random walk (lazy SRW), which at each discrete time step stays in place with probability $1/2$ or moves to a neighboring vertex with equal probability. In any of the three lattices, away from the $x_1$-axis, the walk behaves like the lazy SRW on that lattice. At vertices on the $x_1$-axis, the walk has the chance to jump to neighboring vertices in any of the three lattices. The heat kernel $p(n,x,y)$ is the transition density of this random walk (with respect to the natural reference measure given at each vertex by the degree of this vertex). Let $D_x$ denote the dimension of the lattice to which vertex $x$ belongs, with $D_x = D_{\min} =\min\{4,5,6\} = 4$ if $x$ is on the $x_1$-axis. Let $|x|$ be the maximum of $1$ and the distance from $x$ to the $x_1$-axis, $d_{i_x}(x,y)$ be the distance between $x$ and $y$ staying in the lattice containing $x$ without visiting the $x_1$-axis with $d_{i_x}(x,y) = + \infty$ if $x, \ y$ are not in the same lattice, and $d_+(x,y)$ denote the distance between $x$ and $y$ where the $x_1$-axis must be visited. Then we show in Corollary \ref{lattice_lattice_hkest} that
\begin{align}
\begin{split}
p(n,x,y) \approx &\frac{C}{n^{\frac{D_x}{2}}}\exp \Big(-\frac{d_{i_x}^2(x,y)}{cn}\Big) 
+ \Bigg[ \frac{C}{n^{2} \, |x|^{D_x-3} \, |y|^{D_y-3}} \\[1ex]
&+ \frac{C}{n^{\frac{D_y}{2}} \, |x|^{D_x-3}}
+  \frac{C}{n^{\frac{D_x}{2}} \, |y|^{D_y-3}}  \Bigg] \exp\Big(-\frac{d_+^2(x,y)}{cn}\Big).\label{example_est}
\end{split}
\end{align}
Here the notation $\approx$ means we have matching upper and lower bounds of this form, where the constants $C, c$ differ in the upper and lower bound. These constants may depends on the dimensions of the lattices and how they are glued, but are independent of $n, x, y$. 

For instance, if both $x$ and $y$ are on the $x_1$-axis, then
\[ p(n,x,y) \approx \frac{C}{n^2} \exp\Big(-\frac{d^2(x,y)}{cn}\Big).\]
In fact, we already obtained such heat kernel estimates where both points belong to the gluing set in our preceding paper \cite{FK_glue}. However, in this paper, we are now able to give heat kernel estimates for any two points. 

If instead $x \in \Z^5, \ y \in \Z^6,$ then
\[ p(n,x,y) \approx \bigg[\frac{C}{n^2 |x|^2 |y|^3} + \frac{C}{n^3|x|^2} + \frac{C}{n^{5/2}|y|^3}\bigg]\exp\Big(-\frac{d^2(x,y)}{cn}\Big). \]
Note the smallest power of $n$, the $n^2$ appearing in the first term, is an effect from the smallest lattice (in terms of volume), the copy of $\Z^4$.

If $x, y \in \Z^5$, then
\begin{align*}
  p(n,x,y) &\approx \frac{C}{n^{5/2}}\exp\Big(-\frac{d^2(x,y)}{cn}\Big) + \frac{C}{n^2|x|^2|y|^2}\exp\Big(-\frac{d_+^2(x,y)}{cn}\Big).  
\end{align*}
Here the first term accounts for going between $x$ and $y$ while remaining in the copy of $\Z^5,$ and, indeed, this term is simply (comparable to) the Gaussian heat kernel estimate on $\Z^5$. The second term accounts for going between $x$ and $y$ while visiting the other lattices, in particular, while visiting the smallest lattice. Note that $d_+(x,y)$ is not necessarily equal to $|x|+|y|$ since it may be that the closest points on the $x_1$-axis to $x$ and $y$, respectively, are far away from each other. This is an effect that only appears when the gluing set is infinite. 

The reader can similarly use (\ref{example_est}) to see what happens for other locations of $x$ and $y$. Corollary \ref{lattice_lattice_hkest} gives a general formula that works for any number of lattices glued via a common lower-dimensional lattice. Constraints on dimension are due to requiring a notion of transience; note in our example with gluing over a line, the smallest possible lattice dimensions was $4$. One way this constraint appears in the formula is captured by $|x|^{D_x-3}$, whose exponent should be greater than $0$ because $p(n,x,y)$ cannot take values greater than $1$.

The approach of considering in a systematic way heat kernel estimates on spaces formed by gluing together (or cutting apart) other spaces first emerges in work of Grigor'yan and Saloff-Coste, who consider the setting of manifolds with ends in \cite{GS1, ag_lsc_extcptset,  ag_lsc_hittingprob, ag_lsc_harnackstability, lsc_ag_ends, ag_lsc_FKsurgery}. The main paper \cite{lsc_ag_ends} gives heat kernel estimates for the case that a finite number of manifolds satisfying the parabolic Harnack inequality are glued together over a compact set, provided at least one of the manifolds is non-parabolic (transient). Some particular special cases where all manifolds are parabolic (recurrent) are presented, but in general, this situation is complicated; see work of Grigor'yan, Ishiwata, and Saloff-Coste for further work in relaxing this hypothesis \cite{parabolic_ends, G_I_LSC_survey, Poincare_const}. The previous papers all consider the case of no boundary condition; the case of manifolds with ends with Dirichlet boundary condition or mixed Dirichlet and Neumann boundary condition was handled by the authors in \cite{ed_lsc_mixbdry}.

Another approach to this sort of gluing problem describes gluings that do not remain in the class of manifolds. For instance, consider gluing $\R^{d_1}$ and $\R^{d_2}$ (or $\R^{d_2}_+$) via a point, or via a compact set that is identified with a point. Since this gluing is not smooth and the spaces do not have the same topological dimension, such a space is not a manifold. The heat kernel on such spaces is defined using the theory of Dirichlet forms. This approach was initiated by Chen and Lou in \cite{chen_lou_BMvary}, where they considered a ``flag,'' meaning a copy of $\R_+$ glued to the origin of a copy of $\R^2$. In \cite{ooi_bmvd}, Ooi generalized this work to gluing $\R^{D_1}$ and $\R^{D_2}$ at the origin for any $D_1, D_2$. Further work of Lou obtains heat kernel estimates for Brownian motion with drift on the ``flag'' described above in \cite{lou_BMVD_drift}, and, in \cite{lou_li_distorted, lou_distorted}, Lou and Li and Lou give heat kernel estimates for distorted Brownian motion on a copy of $\R^3$ glued to a copy of $\R_+$ at the origin.

The above approaches all involved continuous time and continuous space. The present paper is concerned with such gluing problems in discrete time and discrete space, in particular, countably infinite graphs. Our previous papers \cite{ed_lsc_trans, FK_glue} build up the machinery needed to obtain the results in this paper, some of which may also be found in the first author's PhD thesis \cite{Emily_thesis}. In this paper, we give full two-sided matching heat kernel estimates in the setting of gluing a finite number of countably infinite graphs satisfying the parabolic Harnack inequality in such a way that the resulting graph is ``book-like''. We require the graphs we glue to be uniformly $S$-transient, a notion we introduced in \cite{ed_lsc_trans}. That paper also proves hitting probability estimates we make use of here. The results of \cite{FK_glue} give gluing set to gluing set heat kernel estimates, which are expanded upon here to get the full estimates. Our main result, Theorem \ref{main_general_thm} provides the most general heat kernel estimates on book-like graphs. Specific examples of this theorem show how to use it in practice, as well as its limitations. Corollary \ref{graphs_finite_case} gives the significantly simpler estimates that occur in the case the gluing set is finite; this is a discrete time and space version of Theorems 4.9 and 5.10 in \cite{lsc_ag_ends}. While we do not state a general theorem that can handle the case where only one piece need be uniformly $S$-transient as in Theorem 6.6 of \cite{lsc_ag_ends}, Examples \ref{Z_D_tail} and \ref{Z_D_plane} are of mixed transience and recurrent type. 

Our key assumption on the gluing set is that the graph be ``book-like''. A graph is ``book-like'' if all points in the gluing spine always see all pages, like the spine of a book. For instance, in the example considered earlier in this introduction, all points on the $x_1$-axis can always see all three of the other lattices. The simplest sort of example of a non-book-like graph is as follows: Again consider a copy $\Z^4$, one of $\Z^5,$ and one of $\Z^6$. However, now identify the $x_1$-axes in the $\Z^4$ and the $\Z^5$ and the $x_2$-axes in the $\Z^5$ and the $\Z^6$. The gluing spine is now a ``cross'', and points on the spine can only easily see two of the three pages. For instance, points on the $x_2$-axis increasingly far away from the origin are increasingly far away from the $\Z^4$ page. Example 7.5 of \cite{FK_glue} discussed some of the difficulty with this cross example. The authors hope to be able to use similar ideas and methods to handle that example in a future paper; however, we also believe this is the most complicated type of example we will be able to compute. 

We remark here upon the relationship between results in the discrete and continuous cases. Ideally, the authors would like to use the discrete approach presented here as a blueprint for obtaining results complementary to those of Grigor'yan and Ishiwata \cite{ag_ishiwata} that allow for gluing over non-compact sets that do not have strict symmetry in the continuous setting of manifolds. Some obstacles to this approach include an appropriate continuous setting analog of the hitting probability results of \cite{ed_lsc_trans} as well as the difficulty in describing gluings in the manifolds setting. It is worth recalling that there is no general theorem that says heat kernel estimates automatically translate between any related discrete and continuous spaces. (For instance, consider a manifold and an associated $\varepsilon$-net as continuous and discrete versions of the same space.) However, certain functional inequalities may transfer over quasi-isometric spaces, and some combinations of functional inequalities are equivalent to particular heat kernel estimates. For instance, Coulhon and Saloff-Coste show the parabolic Harnack inequality (and therefore two-sided Gaussian heat kernel estimates) transfers over quasi-isometry in \cite{TC_LSC_IsoInfini}. Indeed, we made use of this idea of transferring functional inequalities (in particular, relative Faber-Krahn inequalities) between quasi-isometric spaces in \cite{FK_glue}. However, in the present setting, the heat kernel estimates we obtain are sufficiently complicated that trying to equate them to functional inequalities or prove a theorem enabling their transfer to other settings seems quite challenging. Therefore, we instead hope that the overall method of this paper (and those it builds upon) can still be applied in the continuous setting, with care taken to address specific difficulties that arise. 

The rest of this paper is organized as follows. Section \ref{notation} sets out the necessary notation, hypotheses, and particular construction of cutting and gluing graphs we consider. Section \ref{glue_setup} describes the general framework for estimating the heat kernel in terms of a gluing formula. Terms appearing in these gluing formulas can be estimated using results from our papers \cite{ed_lsc_trans, FK_glue}. Section \ref{abstract_est} applies the gluing formulas to obtain general heat kernel estimates for the kind of book-like graph we consider here, with the main result being Theorem \ref{main_general_thm}. The last three sections of this chapter apply Theorem \ref{main_general_thm} to several examples where it is possible to obtain more concrete estimates. Section \ref{finiteglue} addresses the case of a finite gluing set, and Section \ref{proto_ex} covers the main motivating example of gluing lattices of varying dimensions along a smaller dimensional lattice. Section \ref{morebooklike_ex} gives additional examples of gluing lattices along a half-space or along a cone. It also contains two examples where one lattice is recurrent; these examples use the $h$-transform technique. 

\section{Notation and Set-up}\label{notation}

The notation and constructions used in this paper match those found in \cite{ed_lsc_trans, FK_glue}. For the sake of space,  here we make our definitions fairly brief. Additional context for random walk structures on graphs and $S$-transience may be found in Sections 1 and 2 of \cite{ed_lsc_trans}, and a more in-depth description of our gluing/cutting construction is given in Section 3 of \cite{FK_glue}. The overall goal of this section is to introduce the concept of a book-like graph and to state the main hypotheses we will make in the rest of the paper. 

\subsection{Random Walk Structure on Graphs}

A graph $\Gamma$ is a set of vertices $V$ with a set of edges $E,$ where $E$ is a subset of the set of all pairs of elements of $V$. (Such graphs are undirected.) In general, we consider all graphs to be simple and connected unless stated otherwise. 

Any such graph $\Gamma$ has a distance metric $d=d_\Gamma$ defined by taking the shortest path of edges between any two vertices. The notation $x \sim y$ means $\{x,y\} \in E$; we may also say that $x,y$ are neighbors. The notation $x \simeq y$ means $x \sim y$  or $x=y.$ 

We are interested in studying graphs with random walk structures. We will define such random walk structures as triples $(\Gamma, \pi, \mu)$ or $(\Gamma, \mathcal{K}, \pi),$ where $\pi$ is a vertex weight function (measure), $\mu$ is an edge weight function (conductance), and $\mathcal{K}$ is a Markov kernel. 

\begin{definition}[Edge and Vertex Weights]
    Any function $\pi : V \to \R_{>0}$ is a vertex weight. Given such a weight $\pi,$ a function $\mu: E \to \R_{\geq 0}$ is an edge weight if it satisfies the following three properties:
    \begin{itemize}
        \item Adapted to the edges: $\mu_{xy} \not= 0 \iff {x,y} \in E$
        \item Symmetric: $\mu_{xy} = \mu_{yx}$
        \item Subordinate to the vertex measure: $\sum_{y\sim x} \mu_{xy} \leq \pi(x)$ for all $x \in V$.
    \end{itemize}
\end{definition}

We use $B(x,r) = \{ y \in \Gamma : d_\Gamma(x,y) \leq r\}$ to denote the closed ball of radius $r$ centered at $x$ in the graph. The notation $V(x,r)$ denotes the volume of $B(x,r)$ with respect to $\pi,$ that is, $V(x,r) = \sum_{y \in B(x,r)} \pi (y).$ As seen in this paragraph, we often abuse notation by writing $\Gamma$, the graph, and $V$, the set of vertices of $\Gamma$, interchangeably.

\begin{definition}[Markov kernel on a graph]\label{MarkovKernel} 
    Given a vertex weight $\pi$ and an edge weight $\mu$, we define a Markov kernel $\mathcal{K}$ on $\Gamma$ via 
    \begin{equation}\label{MK_formula}
\mathcal{K}(x,y) =\begin{cases} \frac{\mu_{xy}}{\pi(x)}, & x \not = y \\ 1 - \sum_{z \sim x} \frac{\mu_{xz}}{\pi(x)} , & x=y. \end{cases}
\end{equation}
\end{definition}

An important distinction is that while $\Gamma$ is not allowed to have loops as a graph, the Markov kernel $\mathcal{K}$ may nonetheless stay in place.

In Definition \ref{MarkovKernel}, $\mathcal{K}$ was defined given $\pi$ and $\mu$; however, given $\mathcal{K}$ and $\pi$, equation (\ref{MK_formula}) uniquely defines $\mu$. 

The Markov kernel $\mathcal{K}$ is \emph{reversible} with respect to the vertex measure $\pi$:
\[ \mathcal{K}(x,y) \pi(x) = \mathcal{K}(y,x) \pi(y) \quad \forall x, y \in \Gamma.\]
One reason the above line is important is that while the graph $\Gamma$ is undirected, $\mathcal{K}(x,y)$ need not be equal to $\mathcal{K}(y,x)$ (refer back to (\ref{MK_formula})). 

\begin{definition}[Heat kernel on a graph]
Let $\mathcal{K}^n(x,y)$ denote the $n$-th convolution power of $\mathcal{K}(x,y)$ and $(X_n)_{n \geq 0}$ denote a random walk on $\Gamma$ driven by $\mathcal{K}$. Then $\mathcal{K}^n$ satisfies $\mathcal{K}^n(x,y) = \bP^x(X_n = y).$ The quantity $\mathcal{K}^n(x,y)$ need not be symmetric. However, the \emph{transition density} of $\mathcal{K}$,
\[ p(n,x,y) = p_n(x,y) = \frac{\mathcal{K}^n(x,y)}{\pi(y)}, \]
is always symmetric. The transition density $p(n,x,y)$ is also the \emph{heat kernel} of $(\Gamma, \pi, \mu)$.
\end{definition}

In this paper, we make the following two assumptions regarding the weights on the graph. 

\begin{definition}[Controlled weights]\label{cont_weights}
A graph $\Gamma$ has \emph{controlled weights} if there exists a constant $C_c > 1$ such that 
\begin{equation}\label{controlled} \frac{\mu_{xy}}{\pi(x)} \geq \frac{1}{C_c} \quad \forall x \in \Gamma, \ y \sim x .\end{equation}
\end{definition}

This assumption implies that $\Gamma$ is locally uniformly finite (that is, there is a uniform bound on the degree of any vertex) and that for $x \sim y,$ we have $\mu_{xy} \approx \pi(x) \approx \pi(y)$, where the symbol $\approx$ means comparable up to constants that do not depend on $x,y$, as an abuse of Definition \ref{approx_def} below. 

\begin{definition}[The notation $\approx$]\label{approx_def}
For two functions of a variable $x,$ the notation $f \approx g$ means there exist constants $c_1, c_2$ (independent of $x$) such that 
\[ c_1 f(x) \leq g(x) \leq c_2 f(x).\]
\end{definition}

\begin{definition}[Uniformly lazy]\label{unif_lazy}
A weight pair $(\mu, \pi)$ is \emph{uniformly lazy} if there exists $C_e \in (0,1)$ such that 
\[ \sum_{y \sim x} \mu_{xy} \leq (1-C_e) \pi(x) \quad \forall x \in V, \ y \sim x.\]
A Markov kernel $\mathcal{K}$ is \emph{uniformly lazy} if there exists $C_e \in (0,1)$ such that 
\[ \mathcal{K}(x,x) = 1 - \sum_{z \sim x} \frac{\mu_{xz}}{\pi(x)} \geq  C_e \quad \forall x \in \Gamma. \]
The two conditions above are equivalent. In this case, the Markov chain is \emph{aperiodic}. 
\end{definition}

\emph{Unless stated otherwise, we will consider all random walk structures appearing to be uniformly lazy and have controlled weights.}

\begin{definition}[Lazy simple random walk]\label{lazy_SRW}
Let $\Gamma$ be a locally uniformly finite graph. Define weights $\pi(x) = 2\text{deg}(x)$ for all $x \in \Gamma$ and  $\mu_{xy} = 1$ for all ${x,y} \in E$, where $\text{deg}(x)$ returns the degree of vertex $x$. At each time step, the lazy simple random walk (lazy SRW) stays in place with probability $1/2$ and otherwise moves to a neighbor of the current vertex with equal probability.  
\end{definition}

The lazy simple random walk has controlled weights and is uniformly lazy, so it is always possible to impose a random walk structure of the type we consider on any graph with uniformly bounded vertex degree.

\begin{remark}
The controlled weights hypothesis is very important to our work. The uniformly lazy hypothesis is not strictly necessary: We have made it out of convenience to avoid parity problems. 
\end{remark}

\subsection{Random walks on subgraphs}\label{subgraph}

Given a graph $\widehat{\Gamma} =(\widehat{V}, \widehat{E}),$ for any subset $V$ of $\widehat{V}$ we can construct a graph $\Gamma$ with vertex set $V$ and edge set $E$ where $\{ x , y \} \in E$ if and only if $x,y \in V$ and $\{x,y\} \in \widehat{E}.$ As usual, we abuse notation and use the same symbol $\Gamma$ to denote both a subset of $\widehat{V}$ and its associated subgraph. 

Any such subgraph has a distance function $d_\Gamma$ that satisfies $d_{\widehat{\Gamma}}(x,y) \leq d_\Gamma(x,y)$ for all $x, y \in \Gamma.$ A subgraph $\Gamma$ also inherits a random walk structure from $(\widehat{\Gamma}, \pi, \mu)$ by setting $\pi_{\Gamma}(x) = \pi_{\widehat{\Gamma}}(x)$ and $\mu^{\Gamma}_{xy} = \mu^{\widehat{\Gamma}}_{xy}$ for all $x,y \in V, \ \{x,y\} \in E.$ (Hence we may simply use $\pi, \mu$ without indicating the precise graph, provided $x,y \in \Gamma.$) 

There are two natural (sub)Markov kernels on $\Gamma.$

\begin{definition}[Neumann Markov kernel on a subgraph]
Defining a Markov kernel on $\Gamma$ as in Definition \ref{MarkovKernel} yields the Neumann Markov kernel of $\Gamma$ (with respect to $\widehat{\Gamma}$), which we denote $\mathcal{K}_{\Gamma, N}:$
\begin{equation}
\mathcal{K}_{\Gamma,N}(x,y) = \begin{cases} \frac{\mu^\Gamma_{xy}}{\pi(x)} = \frac{\mu_{xy}^{\widehat{\Gamma}}}{\pi(x)}, & x \not = y, \ x,y \in V \\ 1 - \sum_{z \sim x} \frac{\mu^{\Gamma}_{xz}}{\pi(x)} = 1 - \sum_{z \sim x, \, z \in V} \frac{\mu^{\widehat{\Gamma}}_{xz}}{\pi(x)}, & x=y \in V.\end{cases}
\end{equation}

\end{definition}

\begin{definition}[Dirichlet Markov kernel on a subgraph]
The Dirichlet Markov kernel of $\Gamma$ (with respect to $\widehat{\Gamma}$), denoted $\mathcal{K}_{\Gamma, D}(x,y)$, is 
\begin{equation}
\mathcal{K}_{\Gamma,D}(x,y) = \mathcal{K}_{\widehat{\Gamma}}(x,y) \mathds{1}_{V}(x) \mathds{1}_{V}(y) = \begin{cases} \frac{\mu^\Gamma_{xy}}{\pi(x)}, & x \not = y, \ x, y \in V \\ 1 - \sum_{z \sim x, z \in \widehat{V}} \frac{\mu^{\widehat{\Gamma}}_{xz}}{\pi(x)} , & x=y \in V,\end{cases}
\end{equation}
where $\mathds{1}_V(x) = 1$ if $x \in V$ and zero otherwise. When $V \not = \widehat{V}, \ \mathcal{K}_{\Gamma, D}$ is only a sub-Markovian kernel. 
\end{definition}

There are also two natural notions for the boundary of $\Gamma.$ 

\begin{definition}[Boundary of a subgraph]
The \emph{(exterior) boundary} of $\Gamma$ is the set of points that do not belong to $\Gamma$ with neighbors in $\Gamma,$
\[ \partial \Gamma = \{ y \in \widehat{\Gamma} \setminus \Gamma: \exists x \in \Gamma \text{ s.t. } d_{\widehat{\Gamma}}(x,y) = 1\}.\]
By contrast, the \emph{inner boundary} of $\Gamma$ is the set of points inside $\Gamma$ with neighbors outside of $\Gamma,$ 
\[ \partial_I \Gamma = \{ x \in \Gamma: \exists y \not \in \Gamma \text{ s.t. } d_{\widehat{\Gamma}}(x,y) = 1\}.\]
\end{definition}

\subsection{Properties of Graphs}

These first two definitions are geometric properties of subgraphs. The only difference in these definitions comes in which distance function is used in property (a).

\begin{definition}[Uniform]\label{unif}
A subgraph $\Gamma$ of a graph $\widehat{\Gamma}$ is \emph{uniform} in $\widehat{\Gamma}$  if there exist constants $0<c_u, \, C_U < +\infty$ such that for any $x,y \in \Gamma$ there is a path $\gamma_{xy} = (x_0 = x, x_1, \dots, x_k =y)$ between $x$ and $y$ in $\Gamma$ such that 
\begin{enumerate}[(a)]
\item $k \leq C_U d_{\widehat{\Gamma}}(x,y) $
\item For any $j \in \{0, \dots, k\},$
\[ d_{\widehat{\Gamma}}(x_j, \partial \Gamma) = d_{\widehat{\Gamma}}(x_j, \widehat{\Gamma} \setminus \Gamma) \geq c_u (1 + \min \{ j, k-j\}).\]
\end{enumerate}
\end{definition}

\begin{definition}[Inner uniform]\label{innerunif}
A subgraph $\Gamma$ of $\widehat{\Gamma}$ is \emph{inner uniform} in $\widehat{\Gamma}$  if there exist constants $0<c_u, \, C_U < +\infty$ such that for any $x,y \in \Gamma$ there is a path $\gamma_{xy} = (x_0 = x, x_1, \dots, x_k =y)$ between $x$ and $y$ in $\Gamma$ such that 
\begin{enumerate}[(a)]
\item $k \leq C_U d_{\Gamma}(x,y) $
\item For any $j \in \{0, \dots, k\},$
\[ d_{\widehat{\Gamma}}(x_j, \partial \Gamma) = d_{\widehat{\Gamma}}(x_j, \widehat{\Gamma} \setminus \Gamma) \geq c_u (1 + \min \{ j, k-j\}).\]
\end{enumerate}
\end{definition}

For more details on (inner) uniform graphs in the relevant setting and for further literature references, please see Section 4 of \cite{FK_glue}.

We now introduce several definitions and theorems needed to eventually define a Harnack graph. 

\begin{definition}[Doubling]
A graph is said to be \emph{doubling} if there exists a constant $D$ such that for all $r>0, \ x \in \Gamma,$ 
\begin{equation}
V(x,2r) \leq D V(x,r).
\end{equation}
\end{definition}

\begin{definition}[Poincar\'{e} inequality]
We say that $\Gamma$ satisfies the \emph{Poincar\'{e} inequality} if there exist constants $C_p >0, \ \kappa \geq 1$ such that for all $r > 0,\ x \in \Gamma,$ and functions $f$ supported in $B(x,\kappa r),$ 
\begin{equation}
\sum_{y \in B(x,r)} |f(y) - f_B|^2 \, \pi(y) \leq C_p \ r^2 \sum_{y,z \in B(x, \kappa r)} |f(y)-f(z)|^2 \, \mu_{yz},
\end{equation}
where $f_B$ is the (weighted) average of $f$ over the ball $B=B(x,r),$ that is,
\[ f_B = \frac{1}{V(x,r)} \sum_{y \in B(x,r)} f(y)\, \pi(y).\]
\end{definition}

Under doubling, the Poincar\'{e} inequality with constant $\kappa > 1$ (which appears in $B(x, \kappa r)$ on the right hand side) is equivalent to the Poincar\'{e} inequality with $\kappa = 1$ (see Corollary A.51 of \cite{Barlow_graphs}).  

\begin{definition}[Harmonic function]
A function $h: \Gamma \to \R$ is \emph{harmonic} (with respect to $\mathcal{K}$) if 
\begin{equation}\label{harmonic} h(x)  = \sum_{y \in \Gamma} \mathcal{K}(x,y) h(y) \quad \forall x \in \Gamma.\end{equation}
Given a subset $\Omega$ of $\Gamma$ (usually a ball), $h$ is harmonic on that set if (\ref{harmonic}) holds for all points in $\Omega$; this requires that $h$ be defined on $\{v\in \Gamma: \exists \omega\in \Omega,\ v\simeq \omega\} = \Omega \cup \partial \Omega$. 
\end{definition}
As $\mathcal{K}(x,y) = 0$ unless $y \simeq x,$ the sum over $y \in \Gamma$ in (\ref{harmonic}) can be replaced by a sum over $y \simeq x.$ 

\begin{definition}[Elliptic Harnack inequality]
A graph $(\Gamma, \pi, \mu)$ satisfies the \emph{elliptic Harnack inequality} if there exist $\eta \in (0,1),\ C_H>0$ such that for all $r>0, \ x_0 \in \Gamma,$ and all non-negative harmonic functions $h$ on $B(x_0, r),$ we have 
\[ \sup_{B(x_0,\, \eta r)} h \leq C_H \inf_{B(x_0,\, \eta r)} h. \]

We say a graph satisfies the elliptic Harnack inequality \emph{up to scale} $R>0$ if the above definition holds for $0<r<R,$ where the constant $C_H$ depends upon $R.$ 
\end{definition}

\begin{definition}[Solution of discrete heat equation]
A function $u: \Z_{+} \times \Gamma \to \R$ \emph{solves the discrete heat equation} if
\begin{equation}\label{heat_eqn} u(n+1, x) - u(n,x) = \sum_{y \in \Gamma} \mathcal{K}(x,y)[ u(n,y) - u(n,x)]  \quad \forall n \geq 1, \ x \in \Gamma.\end{equation}
Given a discrete space-time cylinder $Q = I \times B,$ $u$ solves the heat equation on $Q$ if (\ref{heat_eqn}) holds there (this requires that for each $n\in I$, $u(n,\cdot)$ is defined on $\{z\in \Gamma: \exists x \in B,\ z\simeq x\} = B \cup \partial B$).\end{definition} 

\begin{definition}[Parabolic Harnack inequality]
A graph $(\Gamma, \pi, \mu)$ satisfies the (discrete time and space) \emph{parabolic Harnack inequality} if: there exist $\eta \in (0,1), \ 0 < \theta_1 < \theta_2 < \theta_3 < \theta_4$ and $C_{PH}>0$ such that for all $s,r >0, \ x_0 \in \Gamma,$ and every non-negative solution $u$ of the heat equation in the cylinder $Q = (\Z_{+} \cap [s, s+ \theta_4 r^2]) \times B(x_0,r),$ we have
\[ u(n_-, x_-) \leq C_{PH} \, u(n_+, x_+) \]
for all $(n_-, x_-) \in Q_-, \ (n_+, x_+) \in Q_+$ such that $d(x_-, x_+) \leq n_+ - n_- ,$ where 
\begin{align*}
&Q_- = (\Z_+ \cap [ s + \theta_1 r^2, s + \theta_2 r^2]) \times B(x_0, \eta r)\\ 
&Q_+ = (\Z_+ \cap [ s+ \theta_3 r^2, s+ \theta_4 r^2]) \times B(x_0, \eta r).
\end{align*}

We say a graph satisfies the parabolic Harnack inequality \emph{up to scale} $R>0$ if the above definition holds for $0<r<R,$ where the constant $C_{PH}$ now depends on $R$.
\end{definition}

The parabolic Harnack inequality obviously implies the elliptic version. Any graph with controlled weights satisfies the parabolic Harnack inequality at scale $1$, and therefore satisfies the parabolic Harnack inequality up to any finite scale $R$. 

As first shown for manifolds by Saloff-Coste \cite{PSHDuke} and Grigor'yan \cite{Gri}, there are several equivalent characterizations of the parabolic Harnack inequality. The discrete version of this theorem given below is due to Delmotte \cite{Delmotte_PHI}. 

\begin{theorem}[Theorem 1.7 in \cite{Delmotte_PHI}]\label{VD_PI}
Given $(\Gamma, \pi, \mu)$  (or $(\Gamma, \mathcal{K}, \pi)$) where $\Gamma$ has controlled weights and $\mathcal K$ is uniformly lazy, the following are equivalent: 
\begin{enumerate}
\item[(a)] $\Gamma$ is  doubling and satisfies the Poincar\'{e} inequality 
\item[(b)] $\Gamma$ satisfies the parabolic Harnack inequality
\item[(c)] $\Gamma$ satisfies two-sided Gaussian heat kernel estimates, that is there exists constants $c_1, c_2, c_3, c_4 >0$ such that for all $x,y \in \Gamma, \ n \geq d(x,y),$
\begin{equation}\label{2sided_graphs}
\frac{c_1 }{V(x, \sqrt{n})} \exp\Big(-\frac{d^2(x,y)}{c_3 n}\Big) \leq p(n,x,y) \leq \frac{c_3}{V(x, \sqrt{n})} \exp\Big(-\frac{d^2(x,y)}{c_4 n}\Big).
\end{equation}
\end{enumerate}
\end{theorem}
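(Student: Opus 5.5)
The cycle of implications I would follow is (a) $\Rightarrow$ (b) $\Rightarrow$ (c) $\Rightarrow$ (a). This is the discrete analogue of the Grigor'yan--Saloff-Coste theorem. The hypotheses of controlled weights and uniform laziness are what let the continuous arguments transfer. Controlled weights make $\mu_{xy}\approx\pi(x)\approx\pi(y)$ for neighbours, so every scale $r\lesssim 1$ is trivial. Uniform laziness gives $\mathcal K^n(x,x)\ge C_e^n$ and removes parity problems, so lower bounds hold for all $n\ge d(x,y)$ and not just along one parity class.

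For (a) $\Rightarrow$ (b) I would run Moser's iteration on discrete space-time cylinders. First, from doubling and the Poincar\'e inequality, derive a local Sobolev inequality on balls $B(x,r)$ with constant $r^2/V(x,r)^{2/\nu}$. Iterating gives mean-value $L^p\to L^\infty$ bounds for nonnegative sub- and supersolutions of the discrete heat equation; the discrete time derivative is handled using laziness, which makes $u(n+1,\cdot)$ a convex combination controlled by $u(n,\cdot)$. Next, a weak $L^1$ estimate for $\log u$, obtained from the Poincar\'e inequality with a cutoff weight, combined with the Bombieri--Giusti abstract lemma, glues the sub- and supersolution estimates into $u(n_-,x_-)\le C_{PH}u(n_+,x_+)$. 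The constraint $d(x_-,x_+)\le n_+-n_-$ is needed only at small scales, where a chaining argument using controlled weights handles the discreteness. I expect this step to be the main obstacle. In particular, the discrete logarithmic estimate requires care, since $\log u(y)-\log u(x)$ is not controlled by a derivative. I would try first to use the elementary inequality $(a-b)(a^{-1}-b^{-1})\le -c(\log a-\log b)^2$ valid for $a/b$ in a bounded range, which controlled weights plus laziness guarantee, because one step changes $u$ by at most a bounded factor.

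For (b) $\Rightarrow$ (c): apply the parabolic Harnack inequality to $u(n,\cdot)=p(n,x,\cdot)$ to get $p(n,x,y)\approx p(n,x,x)$ for $d(x,y)\le\sqrt n$. Combined with $\sum_y p(n,x,y)\pi(y)=1$, this gives the on-diagonal bound $p(n,x,x)\approx 1/V(x,\sqrt n)$. Since the Harnack inequality also implies doubling, chaining the Harnack inequality along a geodesic in roughly $d^2/n$ steps yields the Gaussian lower bound. For the Gaussian upper bound, I would use Davies' perturbation method with weights $e^{\lambda\psi}$, $\psi$ Lipschitz, adapted to graphs in the style of Delmotte/Hebisch--Saloff-Coste. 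Its restriction to $n\ge d(x,y)$ matches the regime where the Gaussian form is correct in discrete time.

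For (c) $\Rightarrow$ (a): doubling follows by integrating the lower bound over $B(x,\sqrt n)$ and comparing with total mass $1$ at times $n$ and $4n$. For the Poincar\'e inequality, the lower bound gives $p(n,y,z)\ge c/V(y,\sqrt n)$ on $B(x,r)\times B(x,r)$ with $n\approx r^2$. Hence
\[
\sum_{y\in B}|f(y)-f_B|^2\pi(y)\le C\sum_{y,z}|f(y)-f(z)|^2p(n,y,z)\pi(y)\pi(z)\lesssim \|f\|^2-\|\mathcal K^{n/2}f\|^2,
\]
and the right-hand side is at most $n\,\mathcal E(f,f)$ by the spectral theorem. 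Localizing to $B(x,\kappa r)$ is done by the standard Jerison-type argument, which uses doubling.
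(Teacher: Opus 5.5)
The paper gives no proof of this statement; it imports the result from Delmotte. Your cycle (a)$\Rightarrow$(b)$\Rightarrow$(c)$\Rightarrow$(a) is the standard scheme, but the Poincar\'e half of (c)$\Rightarrow$(a) has a real gap.

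Your spectral argument with the global kernel only yields
\[
\sum_{y\in B(x,r)}|f(y)-f_B|^2\pi(y)\le C r^2\,\mathcal E_\Gamma(f,f),
\]
where $\mathcal E_\Gamma$ is the energy over \emph{all} edges of $\Gamma$. Statement (a) instead needs, for arbitrary $f$ on $B(x,\kappa r)$, an energy involving only the edges inside $B(x,\kappa r)$. Jerison's Whitney-chaining argument cannot supply this. It lowers the dilation $\kappa$ of an inequality that is already local; if every small-ball inequality being chained carries the global energy, so does the output. A symptom is that your proof of (c)$\Rightarrow$(a) never uses the upper half of (c).

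What is missing is a lower bound for a \emph{localized} kernel. Put $B'=B(x,\kappa r)$, $n=r^2$, and let $\tau$ be the exit time of $B'$. For $y,z\in B(x,r)$ the strong Markov property gives
\[
p(n,y,z)-p_{B',D}(n,y,z)=\sum_{m\le n}\sum_{w\in\partial B'}\bP^y(\tau=m,\,X_m=w)\,p(n-m,w,z)\le\sup_{m\le n,\ w\in\partial B'}p(n-m,w,z).
\]
Since $d(w,z)\ge(\kappa-1)r$, the Gaussian upper bound together with doubling bounds the right side by $C\,V(x,r)^{-1}e^{-(\kappa-1)^2/C}$. For $\kappa$ large this is at most half of the lower bound $c\,V(x,r)^{-1}$ that (c) gives for $p(n,y,z)$. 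Hence $p_{B',D}(n,y,z)\ge c/V(x,r)$ on $B(x,r)\times B(x,r)$.

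Since $\mathcal K_{B',N}\ge\mathcal K_{B',D}$ entrywise, the Neumann kernel of $B'$ satisfies the same bound. Now run your spectral argument with the Markov operator $\mathcal K_{B',N}$ on $\ell^2(B',\pi)$; its Dirichlet form only sees edges inside $B'$, so you get the Poincar\'e inequality with this $\kappa$. Only at that point does the Jerison-type reduction (the remark after the definition of the Poincar\'e inequality) bring $\kappa$ down.

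Two smaller corrections.

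\emph{The logarithmic step.} It is false that controlled weights and laziness make one step change $u$ by a bounded factor. They give only one-sided bounds, such as $u(n+1,x)\ge C_e\,u(n,x)$ and $u(n+1,x)\ge C_c^{-1}u(n,y)$ for $y\sim x$. Both $u(n,y)/u(n,x)$ for neighbours and $u(n+1,x)/u(n,x)$ can be arbitrarily large: take initial data that is huge at one vertex and tiny elsewhere. For the spatial term no ratio control is needed, because for all $a,b>0$
\[
(a-b)(a^{-1}-b^{-1})=-4\sinh^2\bigl(\tfrac12\log(a/b)\bigr)\le-(\log a-\log b)^2,
\]
and likewise $(b-a)(\psi_x^2/a-\psi_y^2/b)\ge\tfrac12\min(\psi_x,\psi_y)^2(\log a-\log b)^2-2(\psi_x-\psi_y)^2$. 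The genuinely discrete difficulty is the time increment. Under large upward jumps, which laziness does not exclude, $\log u(n+1,x)-\log u(n,x)$ is not comparable to $(u(n+1,x)-u(n,x))/u(n,x)$. That step therefore needs an actual argument, not the one you propose.

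\emph{The on-diagonal lower bound in (b)$\Rightarrow$(c).} Mass conservation plus Harnack gives only the on-diagonal \emph{upper} bound. The lower bound also needs $\bP^x\bigl(X_n\in B(x,C\sqrt n)\bigr)\ge\tfrac12$. This is most easily obtained after the Gaussian upper bound, so run Davies' method before the chaining.
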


\begin{definition}[Harnack graph]\label{Harnack_graph}
If $(\Gamma, \pi, \mu)$ satisfies any of the conditions in Theorem \ref{VD_PI}, we call $\Gamma$ a \emph{Harnack graph}. 
\end{definition}

\begin{definition}[Abuse of $\approx$]\label{approx_def2}
Later in the paper, we will often write expressions such as (\ref{2sided_graphs}) in the form
\[ p(n,x,y) \approx \frac{C}{V(x,\sqrt{n})} \exp\Big(-\frac{d^2(x,y)}{cn}\Big).\]
Such uses of $\approx$ should be taken to mean there are matching upper and lower bounds with different values of the constants $c, C$ in the upper and lower bounds. In the event there are multiple lines in a row, the constants may change from line to line, and if there are multiple terms added together in an estimate, $c, C$ can be thought of as different or as the same (by taking maximums/minimums of the differing constants). These constants do not depend on $x, y, n$.
\end{definition}

\begin{definition}[Hitting times and probabilities on graphs]\label{hitprob_def}
Let $K$ be a subgraph of $(\Gamma, \pi, \mu)$. We define the following hitting times and hitting probabilities, where $v \in K$:
\begin{itemize}
\item $\tau_K := \min\{ n \geq 0: X_n \in K\}$, the (first) hitting time of $K$
\item $\psi_K(x) := \bP^x(\tau_K  < + \infty),$ the chance of hitting $K$ in finite time, given the walk starts at $x$ 
\item $\psi_K(x,v) := \bP^x (X_{\tau_K} = v, \, \tau_K < + \infty),$ the chance of hitting $K$ for the first time at the point $v,$ given the walk starts at $x$
\item $\psi_K(n,x,v) := \bP^x (X_{\tau_K} =v, \, \tau_K \leq n),$ the chance of hitting $K$ for the first time at the point $v,$ and doing so in time less than or equal to $n$ 
\item $\psi_K'(n,x,v) := \psi_K(n, x,v) - \psi_K(n-1,x,v) = \bP^x (X_{\tau_K} = v,\, \tau_K =n),$ the chance of hitting $K$ for the first time at $v$ at the time $n$.
\end{itemize}
\end{definition}

The following notion of transience was defined in \cite{ed_lsc_trans}; see Section 2 of that paper for examples. 

\begin{definition}[Uniform $S$-transience]\label{S_trans_def}
Let $(\widehat{\Gamma}, \mathcal{K}, \pi)$ be a connected graph with controlled weights and $K$ be a subset of $\widehat{\Gamma}$ such that $\Gamma := \widehat{\Gamma} \setminus K$ is connected. We say the subgraph $\Gamma$ is \emph{$S$-transient} (``survival"-transient) or that the graph \emph{$\widehat{\Gamma}$ is $S$-transient with respect to the set $K$} if there exists $x \in \widehat{\Gamma}$ such that $\psi_K(x) < 1.$ (If this is not the case, $\Gamma$ is said to be \emph{$S$-recurrent}.)

Further, $\Gamma$ is \emph{uniformly $S$-transient}, ($\widehat{\Gamma}$ is \emph{uniformly $S$-transient with respect to} $K$), if there exist $L, \varepsilon >0$ such that $d(x,K) \geq L$ implies that $\psi_K(x) \leq 1- \varepsilon.$ 
\end{definition}

Uniform $S$-transience is a useful hypothesis because we proved in \cite{ed_lsc_trans} that it implies the Dirichlet and Neumann heat kernels of a subgraph are comparable (when appropriate other hypotheses also hold). 

\begin{lemma}[Corollary 3.14 of \cite{ed_lsc_trans}]\label{D_approx_N}
Assume that $\widehat{\Gamma}$ is a Harnack graph that is uniformly  $S$-transient with respect to $K$ and that $\Gamma := \widehat{\Gamma} \setminus K$ is inner uniform. Then there exist constants $0<c, \, C < +\infty$ such that 
\begin{align}\label{Dirichlet_approx_Neumman}
cp_{\Gamma,N}(Cn, x, y) \leq p_{\Gamma,D}(n,x,y) \leq p_{\Gamma,N}(n,x,y). 
\end{align}
\end{lemma}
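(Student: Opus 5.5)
The upper bound $p_{\Gamma,D}(n,x,y)\le p_{\Gamma,N}(n,x,y)$ is elementary. I would prove it first by a path-counting comparison. For $x\ne y$ in $\Gamma$ the kernels agree: $\mathcal{K}_{\Gamma,D}(x,y)=\mathcal{K}_{\Gamma,N}(x,y)$. On the diagonal, $\mathcal{K}_{\Gamma,D}(x,x)\le \mathcal{K}_{\Gamma,N}(x,x)$, because the Neumann kernel puts the mass of edges leaving $\Gamma$ back onto $x$. Hence $\mathcal{K}_{\Gamma,D}\le \mathcal{K}_{\Gamma,N}$ entrywise, and since all entries are nonnegative the same inequality holds for every convolution power. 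Dividing by $\pi(y)$ gives the upper bound.

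For the lower bound I would go through the Harnack structure of the Neumann chain and a hitting-probability argument on $\widehat{\Gamma}$.

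First, since $\widehat{\Gamma}$ is Harnack and $\Gamma$ is inner uniform, I would invoke the known results (discussed in Section 4 of the gluing paper) that inner uniform subgraphs of Harnack graphs, with the Neumann structure and the inner metric $d_\Gamma$, are themselves Harnack graphs. Consequently
\[ p_{\Gamma,N}(n,x,y)\approx \frac{C}{V_\Gamma(x,\sqrt n)}\exp\Big(-\frac{d_\Gamma^2(x,y)}{cn}\Big). \]
It therefore suffices to prove a Dirichlet lower bound of the form $p_{\Gamma,D}(n,x,y)\ge \frac{c}{V_\Gamma(x,\sqrt n)}\exp\big(-\frac{d_\Gamma^2(x,y)}{cn}\big)$. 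Changing $n$ to $Cn$ on the Neumann side absorbs the change in the Gaussian constant, and volume doubling handles the volume factor.

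The Dirichlet lower bound would be proved in three steps.

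(i) \emph{Points far from $K$.} Suppose $d(x,K)\ge A\sqrt n$ and $d(y,K)\ge A\sqrt n$. The probability that the walk reaches $K$ by time $n$ is small, by the Gaussian upper bound on $\widehat{\Gamma}$. Hence $p_{\Gamma,D}$ is comparable to $p_{\widehat\Gamma}$ on the near-diagonal.

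(ii) \emph{Points near $K$: the main obstacle.} If $x$ is within $\sqrt n$ of $K$, small time Dirichlet estimates alone do not give a lower bound. Here I would use uniform $S$-transience. There is $\varepsilon>0$ such that from points at distance $\ge L$ from $K$, the walk avoids $K$ forever with probability $\ge\varepsilon$. I would combine this with inner uniformity, which supplies a ``tube'' from $x$ moving away from $K$ at linear rate. Chaining elliptic and parabolic Harnack estimates along this tube should give a survival probability $\mathbb{P}^x(\tau_K>n)\ge c>0$ uniformly in $x$, $n$. This replaces the usual boundary-decay factor $h(x)/h(x_r)$ by a constant, which is exactly why the Dirichlet and Neumann kernels are comparable.

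(iii) \emph{Assembling the estimate.} I would run a standard chaining argument along the inner uniform path, using the semigroup property
\[ p_{\Gamma,D}(2n,x,y)\ge \sum_{z} p_{\Gamma,D}(n,x,z)\,p_{\Gamma,D}(n,z,y)\,\pi(z), \]
with $z$ restricted to a ball of radius $\asymp\sqrt n$ at distance $\asymp\sqrt n$ from $K$. Uniform survival from step (ii) moves mass from $x$ into such balls. The Dirichlet near-diagonal bound from step (i) then carries it to $y$ with the right Gaussian factor in $d_\Gamma$. A final time change $n\mapsto Cn$ gives the stated inequality.

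I expect step (ii) to be the hard part. Converting $\psi_K\le1-\varepsilon$ at distance $L$ into a survival estimate valid at all starting points and up to time $n$ requires both the finite-scale Harnack inequality (near $K$) and inner uniformity (to escape to distance $L$ with positive probability).
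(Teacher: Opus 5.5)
Your upper bound is correct: off the diagonal the two kernels coincide, on the diagonal $\mathcal{K}_{\Gamma,D}\le\mathcal{K}_{\Gamma,N}$, and entrywise inequalities between nonnegative kernels persist under convolution powers.

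\textbf{The gap is in the lower bound, at step (iii), not step (ii).} Uniform survival is immediate. Inner uniformity and controlled weights give a path in $\Gamma$ of length $O(L)$ from any $x$ to a point at distance $\ge L$ from $K$. Hence $\bP^x(\tau_K>n)\ge 1-\psi_K(x)\ge C_c^{-O(L)}\varepsilon$. By contrast, chaining Harnack inequalities along the tube through all scales up to $\sqrt n$ would lose a constant per dyadic scale and reproduce exactly the boundary decay you want to avoid.

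The unproved claim is ``uniform survival moves mass from $x$ into such balls.'' Knowing that the killed walk survives with probability $\ge c$ says nothing about \emph{where} the surviving mass is. A priori it could survive while staying in a thin neighbourhood of $K$, where neither step (i) nor interior chaining applies. The same problem arises at the other end. Step (i) needs $y$ far from $K$, so when $y$ is also near $K$ you must transport mass from $y$ as well, using the symmetry of $p_{\Gamma,D}$. Your outline does not address this.

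\textbf{Two ways to close it.} The first is the route behind the cited corollary. Use the Gyrya--Saloff-Coste-type Dirichlet heat kernel estimate for inner uniform domains,
$p_{\Gamma,D}(n,x,y)\approx \frac{C\,h(x)h(y)}{\sqrt{V_{h^2}(x,\sqrt n)V_{h^2}(y,\sqrt n)}}\exp(-d_\Gamma^2(x,y)/cn)$,
with profile $h$. Uniform $S$-transience identifies $h$ with $1-\psi_K$, which by the argument above is bounded above and below on $\Gamma$. So $V_{h^2}\approx V_\Gamma$ and the Dirichlet kernel is Gaussian, hence comparable to the Neumann one.

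The second keeps your direct approach but needs an escape lemma.
\begin{itemize}
\item For $\rho\ge1$ and $z\in\Gamma$ with $d(z,K)<\rho$, inner uniformity supplies $z'$ with $d(z,z')\le C\rho$ and $d(z',K)\ge2\rho$.
\item The Gaussian lower bound on $\widehat\Gamma$ gives $\bP^z(X_{C_1\rho^2}\in B(z',\rho/2))\ge\delta$.
\item By the Markov property, the probability of staying in $\Gamma\cap\{d(\cdot,K)<\rho\}$ up to time $n/2$ is at most $(1-\delta)^{\lfloor n/(2C_1\rho^2)\rfloor}$.
\item Take $\rho=\eta\sqrt n$ with $\eta$ small enough that this bound is below half the survival constant. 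Then $\bP^x(T\le n/2,\ T<\tau_K)\ge c$, where $T$ is the hitting time of $\{d(\cdot,K)\ge\eta\sqrt n\}$.
\item Couple the killed walk with the Neumann walk up to $\tau_K$ to also keep $d_\Gamma(x,X_T)\le A\sqrt n$.
\item Apply the strong Markov property at $T$ from $x$, and do the same from $y$ via symmetry.
\item Chain between the two landing points along an inner uniform path, which stays at distance $\gtrsim\eta\sqrt n$ from $K$.
\end{itemize}
With this lemma inserted, your route becomes a valid, more hands-on alternative to the profile-based argument.
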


\subsection{Book-like Graphs}

The cutting/gluing construction below is described in detail with plentiful examples in Section 3 of \cite{FK_glue}. Therefore, here we give a straightforward description. 

\subsubsection{Gluing graphs}

Begin with a finite number of ``pages'' (sometimes called ``pieces''), graphs $(\Gamma_i, \pi_i, \mu^i)$ for $i=1,\dots, l$. The pages play the role of ``ends'' in the manifolds with ends setting. The pages should be thought of as countably infinite connected graphs satisfying our main hypotheses of controlled and uniformly lazy weights. 

Additionally, assume we have gluing ``spine'' (sometimes called a ``gluing set''), $(\Gamma_0, \pi_0, \mu^0).$ This corresponds to the central compact set in the manifolds with ends case. \textbf{We allow for $\Gamma_0$ to be disconnected}. We still require that weights on $\Gamma_0$ be adapted, subordinate, controlled, and uniformly lazy, though we now allow the weight of totally disconnected vertices to be zero. 

Glue the pages $\Gamma_i, \ 1 \leq i \leq l$ along the spine $\Gamma_0$ as follows: Assume each $\Gamma_i$ comes with a marked set of vertices (a ``margin''); label this set of vertices $G_i.$ Further, assume that for each $i \in \{1, \dots, l\}$ the spine $\Gamma_0$ contains a set of vertices $G_i'$ that is a copy of $G_i.$ (Note the $G_i'$ need not be disjoint.) For each $i,$ we assume there is an identification map (bijection) between $G_i$ and $G_i'$. Moreover, this identification map should satisfy the compatibility condition that there exist constants $c_I, C_I$ such that if $x \in G_i, x' \in G_i'$ are identified, then 
\begin{align}\label{compatible_weights} c_I \pi_0(x') \leq \pi_i(x) \leq C_I \pi_0(x').\end{align}

Equation (\ref{compatible_weights}) ensures the weights at vertices glued together are compatible, with uniform constants applying to the whole graph. This condition implies the weights on any glued edges are also comparable. The graph resulting from gluing the pages to the spine according to these identification maps is called $\Gamma.$ (See Remark 3.1 of \cite{FK_glue} for the appropriate slight modification of (\ref{compatible_weights}) in the situation where $x \in \Gamma_0$ has no neighbors in $\Gamma_0$ due to $\Gamma_0$ being disconnected.)

In this paper, we consider identification as literally identifying the vertices (as opposed to joining the vertices with an edge). It is therefore sensible to define a random walk structure on the glued graph $\Gamma$ by defining the weights
\[ \pi(x) = \sum_{i=0}^l \pi_i(x) \quad \text{ and } \quad \mu_{xy} = \sum_{i=0}^l \mu_{xy}^i \quad \forall x, y \in V,\]
where we take the convention that $\pi_i(x) = 0$ if $x \not \in V_i$ (the vertex set of $\Gamma_i$) and $\mu_{xy}^i =0$ if $\{ x,y \} \not \in E_i$ (the edge set of $\Gamma_i$). 

We ask that the following be true of the glued graph $\Gamma$:
\begin{enumerate}[A.]
\item $\Gamma$ is connected.
\item There exists a number $\alpha>0$ such that $\Gamma_0,$ seen as a subgraph of $\Gamma,$ is $\alpha$-connected, that is, $[\Gamma_0]_\alpha,$ the $\alpha$-neighborhood of $\Gamma_0$ in $\Gamma,$ is connected.

\item When seen as a subgraph of $\Gamma,$ for all $i=1,\dots, l,$ we have $\partial_I \Gamma_i = G_i.$

\item The identification between $G_i, \ G_i'$ satisfies the description given above, i.e. is a bijection between vertices with compatible weights. 
\end{enumerate}

Under these conditions, $(\Gamma, \pi, \mu)$ is a simple connected graph with controlled and uniformly lazy weights (Lemma 3.2 of \cite{FK_glue}). 

\begin{remark}\label{alpha_conn}
    In fact, the assumption/hypothesis B. above that the spine $\Gamma_0$ be $\alpha$-connected is not necessary. We have stated this hypothesis here to match with the construction given in our previous paper \cite{FK_glue}. However, the attentive reader can verify that this hypothesis was not used anywhere in that paper, nor is it necessary for any of the arguments we develop here. Examples \ref{lattice_2arb} and \ref{lattice_sparse} below fall in the class of spines that are not $\alpha$-connected. In all other examples presented in the paper, the spine is $\alpha$-connected, and this fact allows us to simplify further the heat kernel estimates we obtain. 
\end{remark}

\subsubsection{Cutting graphs}

Instead of gluing graphs together, we can also consider cutting a graph $(\Gamma, \pi, \mu)$ apart into pages and a spine. 

Given $(\Gamma, \pi, \mu),$ identify a set of vertices (and their induced subgraph) as $\Gamma_0.$ Remove the \textbf{vertices} in $\Gamma_0$ from $\Gamma.$ This splits the graph into connected components with trailing edges left from removing the vertices in $\Gamma_0.$ Assume $\Gamma_0$ is such that this results in a finite number of connected components, $\Gamma_1, \dots, \Gamma_l,$ all of which are infinite graphs. We ``cap'' the trailing edges in a given $\Gamma_i$ with vertices and call the set of these cap vertices $G_i.$ Then $G_i$ has a natural identification with a subset of vertices $G_i'$ of $\Gamma_0.$ 

On each of the pages $\Gamma_i$ created, we allow for a random walk structure with slightly more flexibility than simply taking the inherited Neumann Markov kernel: We require that there exist constants $C_B, c_B$ such that for all $x \in \Gamma_i,\ 0 \leq i \leq l,$
\begin{align}\label{compatible_cutting} c_B \pi(x) \leq \pi_i(x) \leq C_B \pi(x),\end{align}
and we require a similar inequality for edge weights. This flexibility ensures it is possible to cut a graph apart and glue it back together in such a way as to obtain \emph{exactly} the original graph; see Figure \ref{half_plane_cut}, which first appeared in \cite{FK_glue}, for an idea of why this is necessary. Again, this cutting procedure produces graphs for our pages which satisfy our basic hypotheses (see Lemma 3.3 of \cite{FK_glue}). 

We require the following of the above cutting construction:
\begin{enumerate}[a.]
\item As already mentioned, removing $\Gamma_0$ should create a finite number of connected components, all of which are infinite.
\item In $\Gamma,$ the graph $\Gamma_0$ is $\alpha$-connected (but see Remark \ref{alpha_conn}).
\item The set $\Gamma_0$ should have the property that any vertex $x \in \Gamma_0$ either has (1) all neighbors also in $\Gamma_0$ or (2) has neighbors in $\Gamma_i$ and $\Gamma_j$ with $i \not =j$ and $i, j \in \{0, 1, \dots, l\}.$ 
\end{enumerate}

Property c. above ensures that $\partial_I \Gamma_i = G_i$ and prevents selecting vertices for $\Gamma_0$ that are surrounded by other vertices from only one page and is the analog of property C. for the gluing operation. 

\begin{figure}[t]
\centering
\begin{tikzpicture}[scale=.8]
   \foreach \i in {0,...,5}{
           \filldraw[cb_blue] (\i,-1) circle (4pt);
        \filldraw[cb_blue] (\i,0) circle (4pt);
        \filldraw[cb_blue] (\i, 1) circle(4pt);
        \node at (\i-0.2, 1.3) {\small $8$};
          \node at (\i-0.2, 0.3) {\small $8$};
            \node at (\i-0.2, -0.7) {\small $8$};}
    \foreach \i in {0,...,5}{
       \draw[cb_blue,thick] (\i,-2)--(\i,-1)--(\i,0)--(\i,1)--(\i,2);}
      \draw[cb_blue,thick](-1,1)--(6,1);
    \draw[cb_blue,thick] (-1,0)--(6,0);
        \draw[cb_blue,thick] (-1,-1)--(6,-1);
    
    \node at (6, 0.5) {\textcolor{cb_blue}{$\Gamma$}};
    
    \foreach \i in {10,...,15}{
    \filldraw[cb_teal] (\i, 1) circle (4pt);
    \draw[cb_teal,thick] (\i,0.2)--(\i,2);}
    \draw[cb_teal, thick] (9,1)--(16,1);
    \node at (16, 1.5) {\textcolor{cb_teal}{$\Gamma_1$}};

        \foreach \i in {10,...,15}{
    \filldraw[cb_orange] (\i, -1) circle (4pt);
    \draw[cb_orange,thick] (\i,-0.2)--(\i,-2);}
    \draw[cb_orange, thick] (9,-1)--(16,-1);
        \node at (16, -1.5) {\textcolor{cb_orange}{$\Gamma_2$}};

%
    \foreach \i in {0,...,5}{
       \filldraw[cb_teal] (\i,-4) circle (4pt);
       \filldraw[cb_teal] (\i,-5) circle (4pt);
          \draw[cb_teal,thick] (\i,-5)--(\i,-4)--(\i,-3);
          \node at (\i-0.2, -3.7) {\small $8$};
             \node at (\i-0.2, -4.7) {\small $8$};}
       \draw[cb_teal, thick] (-1,-5)--(6,-5);
       \draw[cb_teal, thick] (-1,-4)--(6,-4);
           \node at (6, -3.5) {\textcolor{cb_teal}{$\Gamma_1$}};

    \foreach \i in {0,...,5}{
       \filldraw[cb_orange] (\i,-6) circle (4pt);
       \filldraw[cb_orange] (\i,-7) circle (4pt);
          \draw[cb_orange,thick] (\i,-6)--(\i, -8);
                    \node at (\i-0.2, -6.3) {\small $8$};
             \node at (\i-0.2, -7.3) {\small $8$};}
       \draw[cb_orange, thick] (-1,-6)--(6,-6);
       \draw[cb_orange, thick] (-1,-7)--(6,-7);
                  \node at (6, -7.5) {\textcolor{cb_orange}{$\Gamma_2$}};


                      \foreach \i in {10,...,15}{
       \filldraw[cb_teal] (\i,-4) circle (4pt);
       \filldraw[cb_teal] (\i,-5) circle (4pt);
          \draw[cb_teal,thick] (\i,-5)--(\i,-4)--(\i,-3);
          \node at (\i-0.2, -3.7) {\small $8$};
             \node at (\i-0.2, -4.7) {\small $4$};
             \node at (\i-0.5, -5.2) {\tiny $1/2$};}
       \draw[cb_teal, thick] (9,-5)--(16,-5);
   \draw[cb_teal, thick] (9,-4)--(16,-4);
        \node at (16, -3.5) {\textcolor{cb_teal}{$\Gamma_1$}};

    \foreach \i in {10,...,15}{
       \filldraw[cb_orange] (\i,-6) circle (4pt);
       \filldraw[cb_orange] (\i,-7) circle (4pt);
          \draw[cb_orange,thick] (\i,-6)--(\i, -8);
                    \node at (\i-0.2, -6.3) {\small $4$};
             \node at (\i-0.2, -7.3) {\small $8$};
              \node at (\i-0.5, -5.7) {\tiny $1/2$};}
      \draw[orange, thick] (9,-6)--(16,-6);
             \draw[cb_orange, thick] (9,-7)--(16,-7);
                \node at (16, -7.5) {\textcolor{cb_orange}{$\Gamma_2$}};

\end{tikzpicture}
 \caption{Begin with the lazy simple random walk on $\Z^2$ in the top left. Vertex weights are shown, and all edges have weight one. We cut it apart by removing the set of vertices where $y=0$, leaving us the two graphs with trailing edges on the top right. (The ``removed''  $\Gamma_0$ vertices are treated as a disconnected set all with weight zero, and are not shown.) By capping the trailing edges with vertices, adding back edges between these vertices, and taking the Neumann random walk, we obtain the bottom left pair of graphs with given vertex weights and all edge weights one. If these bottom left graphs are glued back together, one does not get precisely $\Z^2$ with lazy SRW. However, consider instead cutting apart the graphs where weights are assigned as in the bottom right, with all unlabeled edges having weight one. These bottom right graphs can be glued back together into $\Z^2$ with the lazy simple random walk.}\label{half_plane_cut}
 \end{figure}
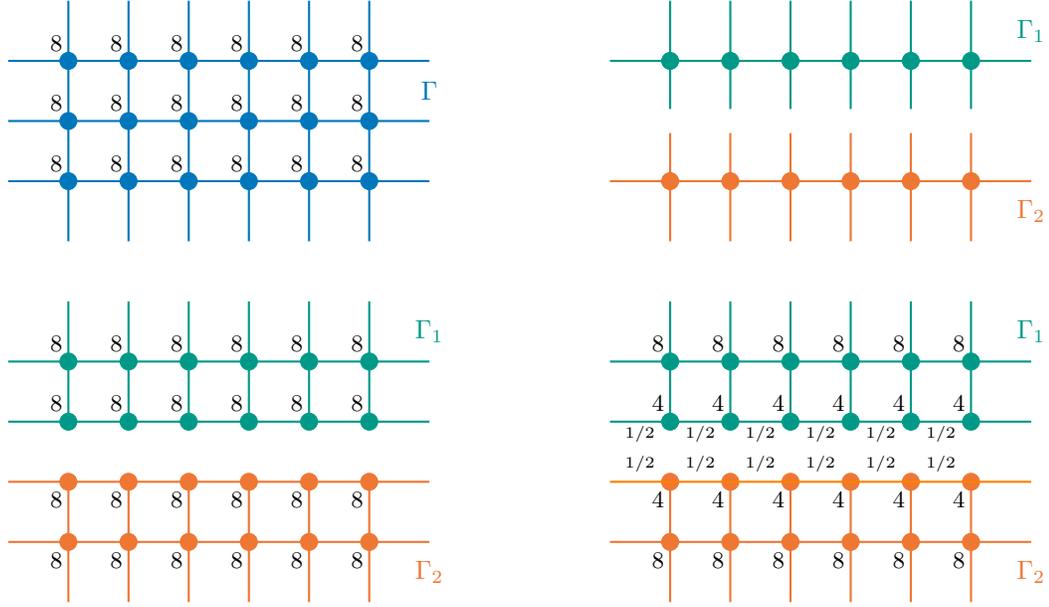

Example 3.5 of \cite{FK_glue} gives further explanation involving Figure \ref{half_plane_cut}, and Section 3.4 of \cite{FK_glue} provides many more examples. 

\subsubsection{Hypotheses on the spine}

In order to say anything about the heat kernel on such glued graphs as we have described, we need to know the spine behaves nicely in some sense. Two possibilities for the behavior of the spine are given below. 

\begin{definition}[Fixed width spine]\label{fixed_width}
Assume $(\Gamma,\pi,\mu)$ has pages $\Gamma_1, \dots, \Gamma_l$ and spine $\Gamma_0.$ The spine $\Gamma_0$ has a \emph{fixed width} of $\delta>0$ if, for all $v \in \Gamma_0,$ there exists $i \in \{1, \dots, l\}$ such that $d(v, \Gamma_i) \leq \delta.$ 
\end{definition}

\begin{definition}[Book-like graph]
Assume $(\Gamma,\pi,\mu)$ has pages $\Gamma_1, \dots, \Gamma_l$ and spine $\Gamma_0.$ For $\delta >0,$ we say $\Gamma$ is a \emph{$\delta$-book-like} graph (or simply a book-like graph) if, for all $v \in \Gamma_0$ and for all $1 \leq i \leq l,$ we have $d(v, \Gamma_i) \leq \delta.$
\end{definition}

If $\Gamma$ has a spine of fixed width $\delta,$ then each vertex in the spine $\Gamma_0$ is at distance at most $\delta$ from \emph{some} page; the nearby page may be different for different points in the spine. By contrast, if $\Gamma$ is a $\delta$-book-like graph, then each vertex in the spine $\Gamma_0$ is at distance at most $\delta$ from \emph{all} pages. If $\Gamma$ is a $\delta$-book-like graph, then its spine also has fixed width $\delta$. The term ``book-like'' is because the spine can see all pages, as is the case for the spine of a book. 

The main results in this paper hold for book-like graphs. We therefore often say ``$\Gamma$ is a book-like graph'' to mean $\Gamma$ is a graph with pages $\Gamma_1, \dots, \Gamma_l$ and spine $\Gamma_0$ satisfying the construction described in this section that also satisfies the $\delta$-book-like property. 

It is often useful to consider what we call ``augmented'' pages. 

\begin{definition}[Augmented page(s)]\label{aug_pages}
Let $(\Gamma, \pi, \mu)$ be a graph of fixed width $\delta$ with pages $\Gamma_1, \dots, \Gamma_l$ and spine $\Gamma_0$. For each $i \in \{1,\dots, l\},$ the \emph{augmented page} associated with $\Gamma_i$, denoted $\widehat{\Gamma}_i$, is 
\[ \widehat{\Gamma}_i := [\Gamma_i]_\delta \cap (\Gamma_i \cup \Gamma_0),\]
where $[\Gamma_i]_\delta := \{ y \in \Gamma: d(y, \Gamma_i) \leq \delta\}$ is the $\delta$-neighborhood of $\Gamma_i.$ 

A random walk structure on the augmented page $\widehat{\Gamma}_i$ is given by setting $\widehat{\pi}_i,\ \widehat{\mu}^i$ to take the values of $\pi, \mu$ from $\Gamma.$ In the event that the weights on $\Gamma_i,\ \Gamma_0$ are not precisely the same as those of $\Gamma$ (due to the compatible weights condition), take instead the weights from $\Gamma_i$ and $\Gamma_0$ (seen as graphs separate from $\Gamma$). The precise choice of weight does not alter the results obtained here except up to a change of constants. 
\end{definition} 

If $\Gamma$ is $\delta$-book-like, then $\Gamma_0 \subseteq \widehat{\Gamma}_i$ for all $i \in \{1, \dots, l\}.$ In other words, the augmented page $\widehat{\Gamma}_i$ consists of page $i$ along with the entire spine.

\subsection{Key hypotheses}\label{hyp}

We may now state the hypotheses for our main results. 

\begin{enumerate}
\item[(B1)] $(\Gamma, \pi, \mu)$ is a $\delta$-book-like graph with pages $\Gamma_1, \dots, \Gamma_l$ and spine $\Gamma_0$.

\item[(B2)] Each page $\Gamma_i, \ 1 \leq i \leq l$ is a Harnack graph (Definition \ref{Harnack_graph}). 

\item[(B3)] Each page $\Gamma_i$ is inner uniform (Definition \ref{innerunif}) and is uniformly $S$-transient (Definition \ref{S_trans_def}) when considered as a subgraph of the augmented page $\widehat{\Gamma}_i$ (Definition \ref{aug_pages}). 

\item[(B4)] Each page $\Gamma_i, \ 1 \leq i \leq l$ is uniform (Definition \ref{unif}) in $\Gamma.$ 
\end{enumerate}

It is worth stating here that each page $\Gamma_i$ is quasi-isometric to its augmented version $\widehat{\Gamma}_i$ (see Lemma 4.1 of \cite{FK_glue}). Since each page $\Gamma_i$ is Harnack, this means each augmented page $\widehat{\Gamma}_i$ is also Harnack by a result of Coulhon and Saloff-Coste \cite[Theorem 8.3]{TC_LSC_IsoInfini}.

An important consequence of the (inner) uniformity assumptions is that we may replace the distance $d_i := d_{\Gamma_i}$ taken in a page $\Gamma_i$ with the distance $d := d_\Gamma$ taken in the entire book-like graph.

\section{Gluing heat kernels}\label{glue_setup} 

The objective of this section is to build up the main tools needed to obtain heat kernel estimates in the next section. We begin with some abstract estimates for gluing heat kernels that have minimal hypotheses. Then, given (B1)-(B4), we state what we know about the terms appearing in these abstract estimates. 

\subsection{Abstract gluing estimates}

The following theorem is a discrete version of Theorem 3.5 of \cite{lsc_ag_ends} and does not require any of the hypotheses (B1)-(B4). 

\begin{theorem}\label{gluing_thm} Let $U_1, U_2$ be two subgraphs of $(\Gamma, \pi, \mu)$ satisfying one of the following two conditions:
\begin{enumerate}
\item $U_1 \cap U_2 = \emptyset$ in such a way that $\partial U_1 \cap U_2$ and $\partial U_2 \cap U_1$ are also empty
\item $U_2 \subset U_1.$ 
\end{enumerate}
Then for all $x \in U_1, y \in U_2,$ and $n\geq d(x,y),$ 
\begin{align}
p(n,x,y) &\leq p_{U_1,D}(n,x,y)
\\&+\underbrace{2 \sum_{v \in \partial U_1} \sum_{w \in \partial U_2} \sup_{\lfloor \frac{n}{4} \rfloor \leq m \leq n} p(m,v,w) \psi_{\partial U_1}(n,x,v) \psi_{\partial U_2} (n,y,w)}_{\text{{\color{cb_blue}TERM A}}} \\
&+ \underbrace{\sum_{v \in \partial U_1} \sum_{w \in \partial U_2} \sup_{\lfloor \frac{n}{4} \rfloor \leq m \leq n} \psi'_{\partial U_2}(m,y,w) \psi_{\partial U_1}(n,x,v) \sum_{l=d(v,w)}^n p(l,v,w)}_{{\color{cb_teal}\text{TERM } B_x}} \\
&+ \underbrace{\sum_{v \in \partial U_1} \sum_{w \in \partial U_2} \sup_{\lfloor \frac{n}{4} \rfloor \leq m \leq n} \psi'_{\partial U_1}(m,x,v) \psi_{\partial U_2}(n,y,w) \sum_{l=d(v,w)}^n p(l,v,w)}_{{\color{cb_magenta}\text{TERM } B_y}} .
\end{align}
and 
\begin{align}
2p(n,x,y) & \geq p_{U_1, D}(n,x,y)
\\&+\underbrace{2\sum_{v \in \partial U_1} \sum_{w \in \partial U_2} \inf_{\lfloor \frac{n}{4} \rfloor \leq m \leq n} p(m,v,w) \psi_{\partial U_1} \Big(\Big\lfloor \frac{n}{4}\Big\rfloor, x, v\Big) \psi_{\partial U_2} \Big(\Big\lfloor\frac{n}{4}\Big\rfloor, y, w\Big)}_{{\color{cb_blue}\text{TERM }A}} \\
 &+ \underbrace{\sum_{v\in \partial U_1} \sum_{w \in \partial U_2} \inf_{\lfloor \frac{n}{4}\rfloor \leq m \leq n} \psi'_{\partial U_2}(m,y,w) \psi_{\partial U_1} \Big( \Big\lfloor \frac{n}{4} \Big\rfloor, x, v\Big) \sum_{l=d(v,w)}^{\lfloor \frac{n}{4}\rfloor -1} p(l,v,w)}_{{\color{cb_teal}\text{TERM } B_x}} \\
 & + \underbrace{\sum_{v \in \partial U_1} \sum_{w \in \partial U_2} \inf_{\lfloor \frac{n}{4} \rfloor \leq m \leq n} \psi'_{\partial U_1}(m,x,v) \psi_{\partial U_2} \Big( \Big\lfloor \frac{n}{4} \Big\rfloor, y, w \Big) \sum_{l=d(v,w)}^{\lfloor \frac{n}{4} \rfloor -1} p(l,v,w)}_{{\color{cb_magenta}\text{TERM } B_y}}.
\end{align}

\end{theorem}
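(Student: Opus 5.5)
We follow the discrete analogue of the proof of Theorem 3.5 in \cite{lsc_ag_ends}: decompose trajectories by first hitting times and use the strong Markov property together with symmetry $p(n,x,y)=p(n,y,x)$. Write $\tau_1=\tau_{\partial U_1}$ for the walk started at $x$ and $\tau_2=\tau_{\partial U_2}$ for the (time-reversed) walk started at $y$.

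\textbf{Step 1: first decomposition.} For $x\in U_1$, either the walk stays in $U_1$ up to time $n$, or it exits. Exiting $U_1$ means it hits $\partial U_1$. This gives the exact identity
\[ p(n,x,y)=p_{U_1,D}(n,x,y)+\sum_{v\in\partial U_1}\sum_{k=0}^{n}\psi'_{\partial U_1}(k,x,v)\,p(n-k,v,y). \]
In case (2), $y\in U_2\subset U_1$ is still reachable from $v$ only through the full kernel. In case (1), the separation assumptions on $\partial U_1\cap U_2$ and $\partial U_2\cap U_1$ ensure that any path from $v$ to $y\in U_2$ must hit $\partial U_2$.

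\textbf{Step 2: second decomposition.} We apply the same reasoning from $y$, using symmetry. Any path from $v$ to $y$ that enters $U_2$ from outside hits $\partial U_2$. Reversing time, we get
\[ p(n-k,v,y)=\sum_{w\in\partial U_2}\sum_{j}\psi'_{\partial U_2}(j,y,w)\,p(n-k-j,v,w) \]
plus, in case (2), a Dirichlet piece from $v$ to $y$ inside $U_2$, which is absorbed into the same pattern. There is a subtlety here. In case (2) with $v\in\partial U_1$, we have $v\notin U_2$, so the path from $v$ does cross $\partial U_2$. Thus in both cases
\[ p(n,x,y)-p_{U_1,D}(n,x,y)=\sum_{v,w}\sum_{k+j+l=n}\psi'_{\partial U_1}(k,x,v)\,\psi'_{\partial U_2}(j,y,w)\,p(l,v,w), \]
where we use that $\pi$-symmetry makes $\psi'$ from $y$ match the last-exit decomposition.

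\textbf{Step 3: splitting the triple sum.} Since $k+j+l=n$, at least one of $k,j,l$ is $\ge n/3\ge\lfloor n/4\rfloor$. We split accordingly. If $l\ge\lfloor n/4\rfloor$, we bound $p(l,v,w)$ by its sup over $[\lfloor n/4\rfloor,n]$ and sum the $\psi'$ over $k,j\le n$. This gives $\psi_{\partial U_1}(n,x,v)\psi_{\partial U_2}(n,y,w)$, i.e.\ TERM A. The factor $2$ accommodates overlaps between the regions. If instead $j\ge\lfloor n/4\rfloor$ (and $l$ is arbitrary), we bound $\psi'_{\partial U_2}(j,y,w)$ by its sup, sum over $k$ to get $\psi_{\partial U_1}(n,x,v)$, and sum over $l$ from $d(v,w)$ (since $p(l,v,w)=0$ for $l<d(v,w)$); this gives TERM $B_x$. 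The case $k\ge\lfloor n/4\rfloor$ symmetrically gives TERM $B_y$.

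\textbf{Step 4: lower bound.} We restrict the triple sum to disjoint subregions, which is why the inequality has a $2$ on the left. The three regions are: $k,j<\lfloor n/4\rfloor$, which forces $l>n/2\ge\lfloor n/4\rfloor$; $k,l<\lfloor n/4\rfloor$, which forces $j\ge\lfloor n/4\rfloor$; and $j,l<\lfloor n/4\rfloor$. Each region is bounded below by the corresponding inf, with partial sums giving $\psi(\lfloor n/4\rfloor,\cdot,\cdot)$ and the sum over $l$ truncated at $\lfloor n/4\rfloor-1$. These regions are pairwise disjoint, so the total is at most the full sum. The factor $2$ in front of TERM A in the lower bound is handled by the $2$ on the left.

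\textbf{Main obstacle.} The main difficulty is the careful justification in Step 2 that the decomposition holds in both geometric cases. In particular, in case (2) one must make sure the Dirichlet term $p_{U_1,D}$ is the only part not passing through both boundaries. One must also correctly treat the time-reversal identity that turns the last visit to $\partial U_2$ into a first-hitting quantity $\psi'_{\partial U_2}(j,y,w)$ via reversibility.
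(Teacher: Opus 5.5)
Your argument is correct. It rests on the ingredients the paper relies on: the paper defers the proof to \cite{Emily_thesis}, which follows the gluing lemmas of \cite{lsc_ag_ends}. Those ingredients are the first-exit decomposition at $\tau_{\partial U_1}$ from $x$, reversibility $p(m,v,y)=p(m,y,v)$, and a second first-exit decomposition at $\tau_{\partial U_2}$ from $y$. The Dirichlet part of the second decomposition vanishes because $v\in\partial U_1$ forces $v\notin U_2$ in both cases.

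The difference is organizational. The paper's route handles one boundary at a time, using a lemma that splits the first exit time at half the remaining time, and then chains two such estimates; this produces the $\lfloor n/4\rfloor$ thresholds and the factors of $2$. You instead write the exact identity
\[ p(n,x,y)-p_{U_1,D}(n,x,y)=\sum_{v\in\partial U_1}\sum_{w\in\partial U_2}\sum_{k+j+l=n}\psi'_{\partial U_1}(k,x,v)\,\psi'_{\partial U_2}(j,y,w)\,p(l,v,w) \]
once, and then partition the simplex $\{k+j+l=n\}$. The sums may be infinite, but all terms are nonnegative, so the rearrangements are justified. Your route is more transparent and shows both factors of $2$ are slack.

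\textbf{Upper bound.} Overlaps between your three regions are harmless, since every term is nonnegative. Your argument therefore gives the bound with coefficient $1$ on TERM $A$; the $2$ is not what ``accommodates overlaps''.

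\textbf{Lower bound.} Disjoint regions give $p-p_{U_1,D}\ge A+B_x+B_y$ directly. The stated form then follows by multiplying by $2$.

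\textbf{Two small fixes.}
\begin{enumerate}
\item Drop the sentence in Step 2 about a Dirichlet piece from $v$ to $y$ being ``absorbed''. That piece is exactly zero, as you then observe.
\item In Step 4, use non-strict inequalities where the statement needs them. With $q=\lfloor n/4\rfloor$, take the regions $\{k,j\le q\}$, $\{k\le q,\ l\le q-1\}$ and $\{j\le q,\ l\le q-1\}$, so the partial sums are exactly $\psi(\lfloor n/4\rfloor,\cdot,\cdot)$. With your strict inequalities on $k,j$ you only get $\psi(\lfloor n/4\rfloor-1,\cdot,\cdot)$.
\end{enumerate}
The corrected regions are still pairwise disjoint. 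Any pairwise intersection lies in $\{k,j\le q,\ l\le q-1\}$, which would force $n\le 3q-1<n$. On each region the remaining index lies in $[q,n]$: in the first region $l\ge n-2q\ge 2q$, and in the other two the free index is at least $n-2q+1>q$.
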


The hitting probabilities $\psi_{\partial U_1}(n,x,v)$ and $\psi'_{\partial U_1}(m,x,v)$ are as in Definition \ref{hitprob_def}. The proof of Theorem \ref{gluing_thm} is essentially the same as in the continuous and compact case of \cite{lsc_ag_ends} and is based on a series of gluing lemmas. The full details of this proof are found in Appendix A of the first author's PhD thesis \cite{Emily_thesis}. In the setting of gluing two copies of $\R^n$ via a paraboloid of revolution in \cite{ag_ishiwata}, Grigor'yan and Ishiwata use Lemma 4.1 as their main gluing lemma. Their Lemma 4.1 can be seen as a continuous version of Lemmas 3.1 and 3.3 of \cite{lsc_ag_ends} that can handle gluing over non-compact (unbounded sets). Since their setting only ever has two ends, it suffices to use a generalization of the gluing lemmas of \cite{lsc_ag_ends}; our present setting with an arbitrary finite number of pages requires the above generalization of the full gluing theorem (Theorem 3.5) of \cite{lsc_ag_ends}.

Although the sums appearing in Theorem \ref{gluing_thm} appear to be over the full (possibly infinite) sets $\partial U_1,\ \partial U_2,$ in reality each of the three ``colorful'' terms is only non-zero when all of the following are true: $d_\Gamma(x,v) \leq n, \ d_\Gamma(y,w) \leq n,$ and $d_\Gamma(v,w) \leq n$. Consequently all sums are finite, and the distance between any points appearing is at most of order $n$. 

The additional hypotheses (B1)-(B4) and gluing structure let us estimate the objects appearing in the above estimate. In general, we apply Theorem \ref{gluing_thm} with either $U_1, \ U_2$ being distinct pages or with $U_1$ being a neighborhood of a page and $U_2$ being the page itself. These two settings correspond to the two conditions on $U_1,\ U_2$ in the theorem. 

The careful reader might be concerned that such choices of $U_1, \ U_2$ in the paragraph above need not satisfy the conditions on these sets given in the theorem. For instance, if $U_1 = \Gamma_1$ and $U_2 = \Gamma_2$, it is possible that $\partial \Gamma_1 \subset \Gamma_2$. Indeed, this is the case in our main example of gluing a copy of $\Z^4$ one of $\Z^5$ and one of $\Z^6$ by identifying their $x_1$-axes. However, in Section \ref{abstract_est} below, we will see this difficulty can be resolved by taking $U_1$ to be say $\Gamma_1$ minus a neighborhood of the spine. In general, sums of the sort appearing in the above gluing theorem are fairly flexible. Therefore, without loss of generality, assume the sums are over points that only belong to the spine. Then there are two kinds of objects in the gluing theorem: (1) heat kernels between two points in the gluing spine, e.g. $p(m,v,w)$ and sums thereof, and (2) hitting probabilities of the gluing set/exit probabilities of a page.  We obtained estimates for spine-to-spine heat kernels (1) using Faber-Krahn inequalities in \cite{FK_glue}, and estimates for the hitting probabilities (2) in \cite{ed_lsc_trans}.

\subsection{Estimates of quantities in Theorem \ref{gluing_thm}}\label{term_estimates}

Theorem \ref{gluing_thm} provides a way to obtain heat kernel estimates with upper and lower bounds that are matching (modulo some supremums/infimums). In this subsection, we consolidate results of our earlier papers to give two-sided estimates for all of the objects appearing in Theorem \ref{gluing_thm} in terms of more concrete quantities such as distances and volumes. 

In Theorem \ref{gluing_thm}, the spine-to-spine heat kernel $p(m,z,w)$ where $z,w \in \Gamma_0$ appears, as well as finite sums of $p(m,z,w)$ in the time variable. Estimates for the spine-to-spine heat kernel were obtained in \cite{FK_glue} and are summarized below. In general, as discussed in Section 7 of \cite{FK_glue}, obtaining good $\Gamma_0$ to $\Gamma_0$ heat kernel estimates, particularly in the upper bound, is a major obstacle to proving more general results about graphs that are not book-like.

For any $v \in \Gamma_0,$ define
\[ V_{\min}(v,r) = \min_{1 \leq i \leq l} V_i(v_i, r),\]
where each $v_i$ is a closest point to $v$ in $\Gamma_i$ and $V_i := V_{\Gamma_i}$. (In \cite{FK_glue}, we gave a more complicated definition of $V_{\min}$. However, in the present setting of book-like graphs, the two definitions of $V_{\min}$ are comparable.) As shown in Section 6.1 of \cite{FK_glue}, under the assumption the graph is book-like, $V_{\min}(v,r)$ is a doubling function that increases in $r$ for fixed $v$.

\begin{lemma}[Theorems 6.3 and 6.5 of \cite{FK_glue}, spine-to-spine heat kernel estimate]\label{spine_to_spine}
Let $(\Gamma, \mu, \pi)$ be a book-like graph satisfying (B1)-(B4) of Section \ref{hyp}. Then for all $v, w \in \Gamma_0$ and $m \gg d_\Gamma(v,w) + \delta,$
\begin{equation}\label{spine_to_spine_est}
p(m,v,w) \approx \frac{C}{V_{\min}(v, \sqrt{m})} \exp\Big(-\frac{d_\Gamma^2(v, w)}{cm}\Big),
\end{equation}
where the notation $\approx$ is abused as in Definition \ref{approx_def2}.
\end{lemma}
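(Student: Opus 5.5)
Since this lemma is quoted from Theorems 6.3 and 6.5 of \cite{FK_glue}, the plan is to retrace that argument: prove the upper bound in \eqref{spine_to_spine_est} via a relative Faber--Krahn inequality, and the lower bound via a near-diagonal estimate combined with chaining.

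\textbf{Upper bound.} First, by (B2), (B4) and the fact that each $\widehat{\Gamma}_i$ is quasi-isometric to $\Gamma_i$ (Lemma 4.1 of \cite{FK_glue} together with \cite[Theorem 8.3]{TC_LSC_IsoInfini}), each augmented page is Harnack. Hence it satisfies a relative Faber--Krahn inequality with volume function $V_i$. Next, for a ball $B=B(v,r)$ centred on the spine and a finite set $\Omega\subset B$, I would split $\Omega=\bigcup_i(\Omega\cap\widehat{\Gamma}_i)$. The book-like hypothesis (B1) gives $\Gamma_0\subseteq\widehat\Gamma_i$ for every $i$, so each piece lies in a ball of radius comparable to $r$ in $\widehat\Gamma_i$. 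Applying the Faber--Krahn inequality piecewise and taking the worst page yields
\[
\lambda_1(\Omega)\geq \frac{c}{r^2}\Big(\frac{V_{\min}(v,r)}{\pi(\Omega)}\Big)^{\nu}.
\]
Since $V_{\min}$ is doubling (Section 6.1 of \cite{FK_glue}), the standard Grigor'yan machinery (Faber--Krahn $\Rightarrow$ on-diagonal bound $\Rightarrow$ Gaussian upper bound via integrated maximum principle/Davies) then gives $p(m,v,w)\le C\,V_{\min}(v,\sqrt m)^{-1/2}V_{\min}(w,\sqrt m)^{-1/2}e^{-d^2/cm}$. Doubling lets me replace this by $V_{\min}(v,\sqrt m)$ at the cost of constants absorbed into the exponential.

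\textbf{Lower bound.} I would first prove the near-diagonal estimate $p(m,v,w)\ge c/V_{\min}(v,\sqrt m)$ for $d(v,w)\le \sqrt m$, $m\gg\delta^2$, and then chain it along a geodesic. For the near-diagonal estimate, let $i_0$ be the page achieving the minimum in $V_{\min}$. Starting from $v$, the walk stays (Dirichlet) in the augmented page $\widehat\Gamma_{i_0}$ with probability bounded below; here I would use Lemma \ref{D_approx_N} applied to the complementary pages via (B3), or simply comparability of Dirichlet and Neumann heat kernels for $\widehat\Gamma_{i_0}$ inside $\Gamma$. Inside $\widehat\Gamma_{i_0}$, the Gaussian lower bound of Theorem \ref{VD_PI} gives $p\gtrsim 1/V_{i_0}$. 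Since $p_{\widehat\Gamma_{i_0},D}\le p$, this yields the lower bound with $V_{i_0}=V_{\min}$.

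\textbf{Main obstacle.} The delicate step is the Dirichlet lower bound. The walk in $\widehat\Gamma_{i_0}$ killed upon entering the other pages must still carry mass $\gtrsim 1/V_{i_0}(\sqrt m)$ near the spine. For this I expect to need the inner uniformity in (B3)/(B4) and a Harnack chain within $\widehat\Gamma_{i_0}$ that keeps away from the other pages, together with uniform $S$-transience, exactly as in \cite{ed_lsc_trans}.
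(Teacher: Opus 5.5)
Your overall strategy is the one the paper attributes to \cite{FK_glue}: transfer Faber--Krahn to the augmented pages by quasi-isometry, glue it over spine-centred balls to get $V_{\min}$ for the upper bound, and use uniform $S$-transience (Lemma \ref{D_approx_N}) for the lower bound. The upper-bound half is sound. Splitting $\Omega$ over the $\widehat\Gamma_i$ works because each edge lies in at most $l$ augmented pages and the $L^2$ norms add up. One caveat: you only have Faber--Krahn for balls centred on $\Gamma_0$, and $\Gamma$ is in general not doubling, so the ``standard machinery'' must be the pointwise version. That is, Faber--Krahn in all balls $B(v,r)$ gives $p(2m,v,v)\le C/V_{\min}(v,\sqrt m)$, and this is followed by a two-point Gaussian upper bound, as at the central set in \cite{lsc_ag_ends}.

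The lower bound does not go through as written. First, chaining in $\Gamma$ fails. The near-diagonal bound $c/V_{\min}(\cdot,\sqrt m)$ is false for most points of a spine-centred ball of radius $\sqrt m$. For instance, by Corollary \ref{lattice_lattice_hkest}, if $x,y$ lie in two different pages at distance $\sqrt m$ from $\Gamma_0$, then $p(m,x,y)$ is smaller than $1/V_{\min}$ by a factor of order at least $m^{-1/2}$. If instead you keep the intermediate points in $[\Gamma_0]_K$, each step costs a factor $\pi(B(z,r)\cap[\Gamma_0]_K)/V_{\min}(z,r)$. This is of order $r^{k-D_{\min}}$ for lattices and is not bounded below, so you get an extra $\log(m/d)$ in the exponent. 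Second, Lemma \ref{D_approx_N} cannot be applied ``inside $\Gamma$'': it requires a Harnack ambient graph, and $\Gamma$ is in general not Harnack (it is not even doubling when pages have different growth). Also, the transience that matters is that of page $i_0$ relative to the spine, not that of the complementary pages.

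Both problems disappear if you use (B3) exactly as stated, as the paper does for $p_{\Gamma_i,D}$ in Case 2 of the proof of Theorem \ref{main_general_thm}:
\begin{itemize}
\item Move from $v,w$ to points $v',w'\in\Gamma_{i_0}$ within distance $\delta$. This costs only a constant, by controlled weights and reversibility.
\item Use $p\ge p_{\Gamma_{i_0},D}$.
\item Apply Lemma \ref{D_approx_N} with the Harnack ambient graph $\widehat\Gamma_{i_0}$ to get $p_{\Gamma_{i_0},D}(m,v',w')\ge c\,p_{\Gamma_{i_0},N}(Cm,v',w')$.
\item Apply the full Gaussian lower bound of Theorem \ref{VD_PI} on $\Gamma_{i_0}$, choosing $i_0$ to minimize $V_i(v_i,\sqrt m)$ and using $d_{i_0}\approx d_\Gamma$ by (B3)--(B4).
\end{itemize}
This gives the off-diagonal lower bound in one step, with no chaining.
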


\begin{lemma}[Sums of spine-to-spine heat kernel estimates]\label{spine_to_spine_sum}
Let $(\Gamma, \mu, \pi)$ be a book-like graph satisfying hypotheses (B1)-(B4). 

Then for all $v \not = w \in \Gamma_0$ and $n \gg d_\Gamma(v,w) + \delta$,
\begin{align}
\begin{split}\label{timesum_big}
\sum_{l= d_\Gamma(v,w)}^n p(l,v,w) &\approx \sum_{l = d_\Gamma(v,w)}^n \frac{C}{V_{\min}(v,\sqrt{l})} \exp\Big(-\frac{d_\Gamma^2(v,w)}{cl}\Big) \\&\approx C\frac{d_\Gamma^2(v,w)}{V_{\min}(v, d_\Gamma(v,w))}\exp\Big(-\frac{d_\Gamma^2(v,w)}{cn}\Big) + \sum_{l = d_\Gamma^2(v,w)}^n \frac{C}{V_{\min}(v, \sqrt{l})}.
\end{split}
\end{align}

If $v=w$ and $n \gg d_\Gamma(v,w) + \delta$, then 
\[ \sum_{l=0}^n p(l,v,v) \approx C + \sum_{l=C^* \delta}^n \frac{C}{V_{\min}(v,\sqrt{l})}\]
for some constant $C^*$. (Note $C$ depends on $\delta$.) 
\end{lemma}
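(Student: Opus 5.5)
The first line of (\ref{timesum_big}) follows by summing Lemma \ref{spine_to_spine} in $l$. The second line is then an elementary analysis of this time sum, using only doubling and monotonicity of $V_{\min}(v,\cdot)$. The one subtlety is that Lemma \ref{spine_to_spine} only applies for $l \gg d_\Gamma(v,w)+\delta$. Write $d=d_\Gamma(v,w)$ and fix a large constant $A$. I split the sum into a short-time range $d \le l \le A(d+\delta)$ and a long-time range $A(d+\delta)< l\le n$; on the long-time range, (\ref{spine_to_spine_est}) holds termwise. For the short-time range with $d\ge 1$, I would bound the sum above by a constant times its value at $l\approx A(d+\delta)$. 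Below it, I would use the Harnack/controlled-weights lower bound $p(l,v,w)\ge c^{l}$ for $l\ge d$, together with the Carne--Varopoulos-type upper bound $p(l,v,w)\le C\exp(-d^2/(cl))/\pi(v)$ (or simply the fact that $l\ge d$ forces $d^2/l \le d$). Both bounds are comparable to $C\exp(-cd)$, which matches $d^2 V_{\min}(v,d)^{-1}\exp(-d^2/(cn))$ only up to adjusting constants when $d$ is of order a constant. When $d\gg\delta$, the short range is absorbed by enlarging the Gaussian constant $c$, since $V_{\min}(v,\sqrt l)$ has polynomial growth while $\exp(-d^2/(cl))$ with $l\le Ad$ is exponentially small in $d$. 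This step, making the short-time range harmless in both directions, is where I expect the fiddliest bookkeeping; the cleanest route is to note that the right-hand side of (\ref{spine_to_spine_est}) is valid for all $l\ge d$ after changing constants, because for $d\le l\le Ad$ both sides are $\exp(-\Theta(d))$ up to polynomial factors.

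For the second $\approx$ in (\ref{timesum_big}), I split $\sum_{l=d}^n V_{\min}(v,\sqrt l)^{-1}e^{-d^2/(cl)}$ at $l=d^2$. The range $l\ge d^2$ (if $n\ge d^2$) has $e^{-d^2/(cl)}\in[e^{-1/c},1]$, giving $\sum_{l=d^2}^n C/V_{\min}(v,\sqrt l)$. On the range $d\le l<d^2$ I substitute $l=d^2/s$. By doubling, $V_{\min}(v,d/\sqrt s)\ge c\,s^{-\nu/2}V_{\min}(v,d)$ for some $\nu$. For the upper bound, we get $\sum_{l<\min(d^2,n)} \le C d^2 V_{\min}(v,d)^{-1}\sup_{l\le \min(d^2,n)}(d^2/l)^{\nu/2}e^{-d^2/(cl)}\cdot(\text{geometric sum})$. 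I would do this dyadically, via blocks $l\in[2^{-k-1}d^2,2^{-k}d^2]$ each contributing $2^{-k}d^2\,2^{k\nu/2}e^{-2^k/c}/V_{\min}(v,d)$. When $n<d^2$ only blocks with $2^{-k}d^2\lesssim n$ survive, and the sum is dominated by the top block, giving $\lesssim d^2V_{\min}(v,d)^{-1}e^{-d^2/(c'n)}$ (absorbing the polynomial factor $(d^2/n)^{1+\nu/2}$ into the exponential with a smaller $c'$). When $n\ge d^2$, the sum is bounded by $Cd^2/V_{\min}(v,d)$. For the lower bound, I keep only the top block $l\in[\min(n,d^2)/2,\min(n,d^2)]$, using monotonicity $V_{\min}(v,\sqrt l)\le V_{\min}(v,d)$.

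For $v=w$, the same splitting applies. For $l\le C^*\delta$ the terms are bounded by $1/\pi(v)\le C$, with $l=0$ giving a lower bound of order $1$. For $l>C^*\delta$, Lemma \ref{spine_to_spine} with $d=0$ gives $p(l,v,v)\approx C/V_{\min}(v,\sqrt l)$. This yields the stated formula, with constants depending on $\delta$.
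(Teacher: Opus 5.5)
Your proposal is correct and follows essentially the paper's route. The paper likewise gets the first line by summing Lemma \ref{spine_to_spine} in time, proves the second line by cases on the size of $n$ relative to $d_\Gamma(v,w)$ with a dyadic decomposition using the doubling of $V_{\min}(v,\cdot)$, and treats $v=w$ by noting $p(l,v,v)$ is of constant order for small $l$ and applying Lemma \ref{spine_to_spine} for large $l$.

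Your treatment of the window $d_\Gamma(v,w)\le l\lesssim d_\Gamma(v,w)+\delta$, which the paper leaves implicit, is sound with one caveat. The Carne--Varopoulos bound has $\sqrt{\pi(v)\pi(w)}$ in the denominator, so you must compare $\pi(w)$ with $\pi(v)$ polynomially in $d_\Gamma(v,w)$, using doubling of the pages together with uniformity. The exponential comparison from controlled weights alone would not be absorbed by the Gaussian factor once the window constant is large.
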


Lemma \ref{spine_to_spine_sum} can be proved using the same kind of arguments as in \cite{ed_lsc_trans} (see the proof of Corollary 3.24). The proof considers different cases on the size of $n$ compared to $d_\Gamma(v,w)$ and utilizes the standard technique of taking advantage of the doubling function $V_{\min}$ using a dyadic decomposition. If $v=w,$ then $p(0,v,v)=1$ and indeed $p(l,v,v)$ is roughly $1$ for small $l$; for large $l,$ Lemma \ref{spine_to_spine} can be applied. 

Only the first term of (\ref{timesum_big}) is present if the graph also satisfies the following volume growth condition.

\begin{definition}[Volume growth condition]\label{vol_growth_cond} On a book-like graph $(\Gamma, \mu, \pi)$, we say the volume grows faster than $2$ if for all $1 \leq i \leq l,$ there exists $\varepsilon_i >0,$ such that 
\begin{equation}\label{minvol_faster2}
\frac{V_i(x,R)}{V_i(x,r)} \geq c_i \, \bigg(\frac{R}{r}\bigg)^{2 + \varepsilon_i} \quad \forall x \in \Gamma_i, \ R \geq r.
\end{equation}
In particular, this implies that there exists $\varepsilon >0$ such that
\[ \frac{V_{\min}(v,R)}{V_{\min}(v,r)} \geq c \Big(\frac{R}{r}\Big)^{2+\varepsilon} \quad \forall \nu \in \Gamma_0,\ R \geq r\]
as well. 
\end{definition}

Let $\rho_{\Gamma}(x,y) := \max \{1, d_\Gamma(x,y)\}.$ If (\ref{minvol_faster2}) is satisfied, then the estimate (\ref{timesum_big}) reduces to 
\begin{equation}\label{reduced_timesum}
\sum_{l= d_\Gamma(v,w)}^n p(l,v,w) \approx C \frac{\rho_\Gamma^2(v,w)}{V_{\min}(v, d_\Gamma(v,w))}\exp\Big(-\frac{d_\Gamma^2(v,w)}{cn}\Big).
\end{equation} 
This simplification is obvious in the case that $n \leq d_\Gamma^2(v,w)$ and for the lower bound. In the remaining case, use the doubling property of $V_{\min}$ to compute the ``tail'' sum in (\ref{timesum_big}). The condition (\ref{minvol_faster2}) guarantees the sum is finite. The use of $\rho_\Gamma$ in the numerator handles the case where $v=w$ so that the estimate is a constant, as opposed to zero. In all other situations $\rho_\Gamma$ and $d_\Gamma$ are interchangeable.

The other objects appearing in Theorem \ref{gluing_thm} are hitting probabilities. Applying results of \cite{ed_lsc_trans} to the present setting gives the following lemma. 

\begin{lemma}[Corollaries 3.23 and 3.24 of \cite{ed_lsc_trans}]\label{psi_summary} 
Let $(\widehat{\Gamma}, \pi,\mu)$ be a connected graph that is uniformly lazy with controlled weights. Assume that $(\widehat{\Gamma}, \pi,\mu)$ is Harnack and $\Gamma$ is an inner uniform subgraph of $\widehat{\Gamma}$ that is uniformly $S$-transient. Then, $\forall x \in \Gamma \setminus \partial_I \Gamma, \ v \in \partial \Gamma, \ n \geq d_\Gamma(x,v),$ with $\approx$ as in Definition \ref{approx_def2},
\begin{align}\label{hitprob_deriv_nice}
&\psi_{\partial \Gamma}' (n,x,v) \approx \frac{C \pi(v)}{V_\Gamma(x, \sqrt{n})} \exp\Big(-\frac{d_\Gamma^2(x,v)}{c n}\Big) \\[1ex]\label{hitprob_big}
&\psi_{\partial \Gamma}(n,x,v) \approx \frac{C \pi(v)\rho_\Gamma^2(x,v)}{V_\Gamma(x, d_\Gamma(x,v))}\exp\Big(-\frac{d_\Gamma^2(x,v)}{c n}\Big) + \sum_{m=d_\Gamma^2(x,v)}^n \frac{C \pi(v)}{V_\Gamma(x,\sqrt{m})}.
\end{align}

If, in addition, $V_\Gamma$ satisfies condition (\ref{minvol_faster2}), then
\begin{align}
\psi_{\partial \Gamma}(n,x,v) \approx \frac{C \pi(v)\rho_\Gamma^2(x,v)}{V_\Gamma(x, d_\Gamma(x,v))}\exp\Big(-\frac{d_\Gamma^2(x,v)}{c n}\Big).
\end{align}
\end{lemma}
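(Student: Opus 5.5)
The plan is to derive the pointwise estimate (\ref{hitprob_deriv_nice}) from a last-step decomposition combined with the Dirichlet--Neumann comparison of Lemma \ref{D_approx_N}. The two estimates for $\psi_{\partial\Gamma}(n,x,v)$ then follow by summing in time.

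\textbf{Step 1 (last-step formula).} For $x \in \Gamma\setminus\partial_I\Gamma$ and $v\in\partial\Gamma$, the event $\{X_{\tau}=v,\ \tau=n\}$, with $\tau=\tau_{\partial\Gamma}$, means the walk stays in $\Gamma$ up to time $n-1$, is at some $u\in\Gamma$ with $u\sim v$ at time $n-1$, and then jumps to $v$. Hence
\[ \psi'_{\partial\Gamma}(n,x,v)=\sum_{u\in\Gamma,\,u\sim v}\mathcal K_{\Gamma,D}^{n-1}(x,u)\,\mathcal K(u,v)=\sum_{u\in\Gamma,\,u\sim v}p_{\Gamma,D}(n-1,x,u)\,\mu_{uv}. \]
By controlled weights, $\mu_{uv}\approx\pi(u)\approx\pi(v)$. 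The number of such $u$ is uniformly bounded, and it is at least one since $v\in\partial\Gamma$. Moreover $d_\Gamma(x,u)$ and $d_\Gamma(x,v)$ differ by at most $1$ (measuring $d_\Gamma(x,v)$ through the trailing edge). So it suffices to estimate $p_{\Gamma,D}(n-1,x,u)$ for a single $u\in\partial_I\Gamma$.

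\textbf{Step 2 (Dirichlet $\approx$ Neumann, then Gaussian bounds).} By Lemma \ref{D_approx_N}, which uses exactly the hypotheses here (Harnack $\widehat\Gamma$, inner uniform and uniformly $S$-transient $\Gamma$), we have $c\,p_{\Gamma,N}(C(n-1),x,u)\le p_{\Gamma,D}(n-1,x,u)\le p_{\Gamma,N}(n-1,x,u)$. An inner uniform subgraph of a Harnack graph is itself Harnack for its Neumann walk with intrinsic metric $d_\Gamma$; this is the standard fact also used in \cite{FK_glue}. Therefore Theorem \ref{VD_PI}(c) gives two-sided Gaussian bounds $\frac{C}{V_\Gamma(x,\sqrt{n})}\exp(-d_\Gamma^2(x,u)/cn)$. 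Doubling of $V_\Gamma$ absorbs the changes $n-1$ versus $n$ versus $Cn$. In the range $n\approx d_\Gamma(x,v)$, the constraint $n\ge d$ needed for the Gaussian lower bound at time $C(n-1)$ is handled by adjusting constants, using laziness to reach any time $\ge d$ and the fact that for $n\approx d$ both sides are of order $e^{-cd}$ up to polynomial factors absorbed into the exponential. This proves (\ref{hitprob_deriv_nice}).

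\textbf{Step 3 (summing in time).} Write $\psi_{\partial\Gamma}(n,x,v)=\sum_{m=d}^{n}\psi'_{\partial\Gamma}(m,x,v)$ with $d=d_\Gamma(x,v)$, and split at $m=d^2$.
\begin{itemize}
\item For $m\le\min(n,d^2)$, doubling gives $V_\Gamma(x,\sqrt m)\ge c(\sqrt m/d)^{\log_2 D}V_\Gamma(x,d)$. The polynomial loss is absorbed by $\exp(-d^2/cm)$, and a dyadic decomposition shows the sum is comparable to its top scale, namely $\frac{d^2}{V_\Gamma(x,d)}\exp(-d^2/cn)$. The lower bound comes from keeping only the terms $m\in[\min(n,d^2)/2,\min(n,d^2)]$.
\item For $d^2\le m\le n$, the exponential factor is comparable to $1$, which produces the tail $\sum_{m=d^2}^n C\pi(v)/V_\Gamma(x,\sqrt m)$.
\end{itemize}
The factor $\rho_\Gamma$ replaces $d$ to cover $d\le 1$ harmlessly. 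This gives (\ref{hitprob_big}). Under (\ref{minvol_faster2}), the tail is a convergent geometric-type series dominated by its first dyadic block, which is $\approx d^2/V_\Gamma(x,d)$. It is therefore absorbed into the first term, with the exponential factor harmless in both regimes $n\le d^2$ and $n\ge d^2$.

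The main obstacle is Step 2, specifically the lower bound. It rests entirely on the Dirichlet--Neumann comparison of Lemma \ref{D_approx_N}, i.e.\ on uniform $S$-transience, together with handling boundary points $u\in\partial_I\Gamma$ where the Neumann Gaussian bounds must hold uniformly up to the boundary. Once that comparison is granted, the remaining steps are routine doubling computations.
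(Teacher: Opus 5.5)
Your proposal is correct and is essentially the argument behind Corollaries 3.23 and 3.24 of \cite{ed_lsc_trans}, which the paper invokes without reproving. That argument runs through the first-hitting identity $\psi'_{\partial\Gamma}(n,x,v)=\sum_{u\in\Gamma,\,u\sim v}p_{\Gamma,D}(n-1,x,u)\,\mu_{uv}$, then the Dirichlet--Neumann comparison of Lemma~\ref{D_approx_N} with Gaussian bounds for the Neumann walk on the inner uniform $\Gamma$, and finally a dyadic time summation split at $m\approx d_\Gamma^2(x,v)$. One minor correction: the neighbors $u\sim v$ need not all have $d_\Gamma(x,u)$ within $1$ of $d_\Gamma(x,v)$, since only their $d_{\widehat\Gamma}$-distances are bounded; instead, use $d_\Gamma(x,u)\ge d_\Gamma(x,v)-1$ for every $u$ in the upper bound, and the minimizing $u$ in the lower bound.
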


We will in general apply Lemma \ref{psi_summary} in the case where $\Gamma$ is a page and $\widehat{\Gamma}$ is the augmented page or a neighborhood of the page. Since these two cases look somewhat different, this lemma is left in fairly general subgraph terms.

\section{General heat kernel estimates}\label{abstract_est}

We now carry out the strategy mentioned in the previous section of applying Theorem \ref{gluing_thm} and the estimates of the objects appearing therein. This gives us our main general heat kernel estimate for book-like graphs.

Before the statement of our main theorem, we need to set some additional notation. For $1 \leq i \leq l,$ set $V_i := V_{\Gamma_i}$ and $ d_i := d_{\Gamma_i}$. If $x, y$ do not both belong to $\Gamma_i,$ then set $d_i(x,y) := + \infty$. Also set $\rho_i (x,y) := \max \{ 1, d_i(x,y)\}$. 

Recall the augmented pages are defined as $\widehat{\Gamma}_i := \Gamma_i \cup \Gamma_0$ for all $1 \leq i \leq l$. That is, they are the union of the page and the spine. For $x \in \widehat{\Gamma}_i, \ y \in \widehat{\Gamma}_j,$ where $i$ and $j$ may be the same or distinct, define
\[ P_{x,y,n} = \{ (v,w) \in \partial \Gamma_{i} \times \partial \Gamma_{j} : d(x,v) \leq n, d(y, w) \leq n, d(v,w) \leq n\}.\] 
If say $x \in \Gamma_0,$ then it belongs to $\widehat{\Gamma}_i$ for all $1 \leq i \leq l,$ and therefore the definition of $P_{x,y,n}$ is dependent upon the statement that $x \in \widehat{\Gamma}_i$ to determine $i$. If instead $x$ belongs to only one page, then the choice of $i$ must be the index of the page containing $x$.

Finally, given four vertices $v,w,x,y,$ set
\[ d_{*}^2 := d_\Gamma^2(x,v) + d_\Gamma^2(v,w) + d_\Gamma^2(w,y).\]
Since $(a+b)^2 \approx a^2 +b^2$ when $a, b \geq 0,$ it is also true that 
\[ d_*^2 \approx \big(d_\Gamma(x,v)+d_\Gamma(v,w)+d_\Gamma(w,y)\big)^2.\]

\begin{theorem}\label{main_general_thm}
Assume $(\Gamma,\pi,\mu)$ is a book-like graph with pages $\Gamma_1, \dots, \Gamma_l$ and spine $\Gamma_0$ satisfying hypotheses (B1)-(B4).  Also assume that the volume growth condition of Definition \ref{vol_growth_cond} is satisfied.

Then we have the following heat kernel estimates for $x,y \in \Gamma$ and $n \gg d_\Gamma(x,y) + d(x, \Gamma_0) + d(y, \Gamma_0) + \delta$: 

\begin{enumerate}
    \item If $x \in \widehat{\Gamma}_i, \ y \in \widehat{\Gamma}_j$ ($i, \ j$ may be distinct or not) we have upper and lower bounds on $p(n,x,y)$ of the form
\begin{align}
\begin{split}\label{main_est}
&\frac{C}{V_{i}(x, \sqrt{n})} \exp\Big(-\frac{d_{i}^2(x,y)}{cn}\Big) \\
    &+\sum_{(v,w) \in P_{x,y,n}} C\Bigg[\frac{1}{V_{\min}(v, \sqrt{n})} \frac{\pi(v) \rho_i^2(x,v)}{V_i(x, d_i(x,v))}\frac{\pi(w) \rho_j^2(y,w)}{V_j(y, d_j(y,w))}
\\
&+\frac{\pi(w)}{V_j(y, \sqrt{n})} \frac{\pi(v) \rho_i^2(x,v)}{V_i(x, d_i(x,v))} \frac{\rho_\Gamma^2(v,w)}{V_{\min}(v, d_\Gamma(v,w))} \\
&+\frac{\pi(v)}{V_i(x, \sqrt{n})} \frac{\pi(w) \rho_j^2(y,w)}{V_j(y, d_j(y,w))} \frac{\rho_\Gamma^2(v,w)}{V_{\min}(v, d_\Gamma(v,w))} \Bigg] \exp\Big(-\frac{d_*^2}{cn}\Big)
\end{split}
\end{align}

\item If $x, \ y$ are both near the spine, then (\ref{main_est}) reduces to 
\begin{align}\label{central_est} 
p(n,x,y) \approx \frac{C}{V_{\min}(x, \sqrt{n})} \exp\Big(-\frac{d_\Gamma^2(x,y)}{cn}\Big). 
\end{align}

\end{enumerate}

\end{theorem}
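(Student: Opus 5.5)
The plan is to apply Theorem \ref{gluing_thm} with a suitable choice of $U_1,U_2$ and then substitute the two-sided estimates of Lemmas \ref{spine_to_spine}, \ref{spine_to_spine_sum} (in the reduced form (\ref{reduced_timesum})) and \ref{psi_summary} for every quantity appearing there.

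\textbf{Choice of sets.} For $x\in\Gamma_i$ away from the spine and $y\in\Gamma_j$ with $j\neq i$, take $U_1=\Gamma_i$ and $U_2=\Gamma_j$, each shrunk by removing a bounded neighborhood of $\Gamma_0$. This enforces condition (1) of Theorem \ref{gluing_thm}, and the boundaries $\partial U_1,\partial U_2$ then lie within bounded distance of $\Gamma_0$. Since $U_1\cap U_2=\emptyset$, $p_{U_1,D}(n,x,y)=0$, which is consistent with $d_i(x,y)=+\infty$ in (\ref{main_est}). When $i=j$, take $U_2=\Gamma_i$ (shrunk) and $U_1=\widehat\Gamma_i$ or a neighborhood of $\Gamma_i$, as in condition (2). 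Then Lemma \ref{D_approx_N} together with (B2), (B3) and the quasi-isometry page $\leftrightarrow$ augmented page gives
\[
p_{U_1,D}(n,x,y)\approx \frac{C}{V_i(x,\sqrt n)}\exp\Big(-\frac{d_i^2(x,y)}{cn}\Big),
\]
after absorbing the time-constant change $Cn$ using doubling. By (B4) and inner uniformity, all distances $d_{U}$ may be replaced by $d_\Gamma$ or $d_i$ up to constants. Points near the spine are handled by moving them a bounded distance into a page; this costs only constants because the heat kernel satisfies a local Harnack inequality at bounded scale and $n\gg\delta$.

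\textbf{Substitution.} Without loss of generality the boundary points $v,w$ may be replaced by nearby spine points, using controlled weights and the $\delta$-book-like property. In TERM A, $p(m,v,w)$ for $m\in[n/4,n]$ is estimated by Lemma \ref{spine_to_spine}. Doubling of $V_{\min}$ makes the sup and the inf agree up to constants, giving $V_{\min}(v,\sqrt n)^{-1}e^{-d^2(v,w)/cn}$. The factors $\psi(n,x,v)$ and $\psi(\lfloor n/4\rfloor,x,v)$ are estimated by the reduced form of Lemma \ref{psi_summary}, applied with $\Gamma=\Gamma_i$ inside the augmented page and using the volume growth condition. In TERM $B_x$, $\psi'(m,y,w)$ for $m\in[n/4,n]$ is comparable to $\pi(w)V_j(y,\sqrt n)^{-1}e^{-d^2(y,w)/cn}$, and the time sum of $p(l,v,w)$ is given by (\ref{reduced_timesum}). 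For the lower bound, the sum only runs up to $\lfloor n/4\rfloor-1$, but (\ref{reduced_timesum}) still applies since $n\gg d(v,w)$ on the relevant pairs. TERM $B_y$ is symmetric. Multiplying the three Gaussian factors produces $\exp(-(d^2(x,v)+d^2(v,w)+d^2(w,y))/cn)=\exp(-d_*^2/cn)$. The summation range is exactly $P_{x,y,n}$, since outside it the terms vanish.

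\textbf{Main obstacle.} Upper and lower Gaussian exponents involve different constants $c$. The restriction to $P_{x,y,n}$ and the sup/inf over $m$ must be matched carefully so that each lower bound term genuinely dominates the corresponding upper bound term with modified constants. In particular, the factor $\exp(-d^2(x,v)/cn)$ from $\psi(\lfloor n/4\rfloor,\cdot)$ has to be reconciled with the $\psi(n,\cdot)$ appearing in the upper bound; doubling handles this with only a constant change. A second point is the factor $2$ and the overlapping roles of TERM A and TERM B when $x$ or $y$ lies near the spine, which the case reduction above handles.

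\textbf{Part (2).} If $x,y$ are within bounded distance of $\Gamma_0$, then $\rho_i(x,v)\approx1$ only for the boundedly many $v$ near $x$, and the hitting factors are bounded. One then uses Lemma \ref{spine_to_spine} directly after moving $x,y$ onto the spine, again via the local Harnack inequality. The first term is at most $C/V_i(x,\sqrt n)\le C/V_{\min}(x,\sqrt n)$, so everything collapses to (\ref{central_est}). For the upper bound, the remaining sums over far $v,w$ are controlled by a dyadic decomposition using the growth exponent $2+\varepsilon$.
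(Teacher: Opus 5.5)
Your proposal follows the paper's proof essentially step by step. You reduce to points at distance $>4$ from the spine by the local parabolic Harnack inequality, and apply Theorem~\ref{gluing_thm} with two (shrunk) distinct pages when $i\neq j$ and with a page inside a neighborhood of itself when $i=j$, using Lemma~\ref{D_approx_N} for the Dirichlet term. You then substitute Lemmas~\ref{spine_to_spine}, \ref{spine_to_spine_sum} and \ref{psi_summary}, and get part (2) directly from Lemma~\ref{spine_to_spine}.

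One small correction concerns your closing remark in part (2), which is not needed for the argument. Boundedness of the far spine sums does not follow from the $2+\varepsilon$ growth exponent alone: pages $\Z^D$ over a spine $\Z^k$ need $D\geq k+3$, not merely $D>2$. It follows instead from the transience hypothesis (B3). For example, the hitting factors are comparable to $\psi_{\partial\Gamma_i}(x,v)$, whose sum over $v$ is at most $1$.
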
 

\begin{remark}

\begin{itemize}
\item We made the volume growth assumption (\ref{vol_growth_cond}) out of convenience as the formulas in the theorem are already sufficiently complicated. Without this assumption, our methods and proofs still work by simply using the full estimates given in the previous section. In Example \ref{Z_D_plane} below, one page does not satisfy the volume growth condition. 
\item Technically, in the lower bound, the set $P_{x,y,n}$ should be replaced with $P(x,y,\lfloor\frac{n}{4}\rfloor)$, but due to the doubling property of all volumes present and the ability to change the constants $C, c$, this makes no difference. Similarly, whether $n$ or $\lfloor \frac{n}{4}\rfloor$ is used in any of the terms inside the sum is also irrelevant. 
\item In the places in the formula where $d$ remains instead of $\rho,$ one may always replace with $\rho$ if desired (again due to volume doubling and changes up to constants). 
\item If $x$ (or $y$) is in the spine, then (\ref{main_est}) holds with any choice of $i$ in $\{1, \dots, l\}$. The courageous reader can verify that the estimates in (\ref{main_est}) are the same regardless of choice of $i$, and also that if both $x$ and $y$ belong to the spine, then (\ref{central_est}) is also the same. (This is easier to verify in the specific examples given in later sections.) 
\end{itemize}
\end{remark}

\begin{proof}

First, notice estimate (\ref{central_est}) is exactly the spine-to-spine heat kernel estimate seen above in (\ref{spine_to_spine_est}).  Moreover, by the local parabolic Harnack inequality, $p(n, x,y) \approx Cp(cn, x^*,y^*),$ where $x^*$ and $y^*$ are are most a fixed distance from $x$ and $y$, respectively. Therefore it suffices to prove the theorem for $x, \ y,$ satisfying $d(x, \Gamma_0), d(y, \Gamma_0) > 4.$ There are two cases based on whether $x, \ y$ are in different pages or the same page. 

The following lemma should reassure the reader that the estimate (\ref{main_est}) is not sensitive to small changes.

\begin{lemma}\label{notsens_sum}
    In (\ref{main_est}), the sum over $(v,w) \in P_{x,y,n}$ can be replaced with a sum over any of the following sets:
    \begin{itemize}
        \item $\{(v,w) \in [\Gamma_0]_{K} \times [\Gamma_0]_K : d(x,v) \leq n, d(y,w) \leq n, d(v,w) \leq n\} $ where $K$ is any fixed constant (say $K=4$ or $K=10$)
        \item sum over the centers of balls of any fixed $\varepsilon$-net of any of the sets of the previous item
        \item either of the previous two items, but with $n$ replaced by $\lfloor kn \rfloor$ for some constant $k >0$ (in particular $\lfloor \frac{n}{4} \rfloor$)
        \item any of the previous sets but with $x, y$ replaced with $x^*, y^*$ where $d_\Gamma(x, x^*), \allowbreak  d_\Gamma(y, y^*) \leq K$.
    \end{itemize}
\end{lemma}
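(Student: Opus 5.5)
The plan is to show that each summand in (\ref{main_est}) is stable, up to constants, under each modification in the list, and that the modified index sets are comparable in a way that only costs constant factors. The key observation is that every factor in a summand is a function of $(v,w)$ that changes by at most a bounded multiplicative factor when $v$ or $w$ moves a bounded distance. The weights satisfy $\pi(v)\approx\pi(v')$ for $v\simeq v'$ by controlled weights. Each of $V_i(x,d_i(x,v))$, $V_{\min}(v,\cdot)$ and $V_j(y,\cdot)$ is doubling (for $V_{\min}$ by Section 6.1 of \cite{FK_glue}), and the distances $d_i$ and $d_\Gamma$ are comparable by (B3)–(B4). Hence moving $v$ by at most $K$ changes $\rho_i^2(x,v)/V_i(x,d_i(x,v))$ and $\rho_\Gamma^2(v,w)/V_{\min}(v,d_\Gamma(v,w))$ by constants. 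The exponential $\exp(-d_*^2/cn)$ changes only by adjusting $c$: we have $d_*\le d_*'+2K$, and $n\gg\delta+d(x,\Gamma_0)+d(y,\Gamma_0)$, so $(d_*'+2K)^2/n\le 2d_*'^2/n+8K^2/n$, and the last term is bounded.

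For the first bullet, the containment $P_{x,y,n}\subseteq[\Gamma_0]_K\times[\Gamma_0]_K$ is automatic, since $\partial\Gamma_i\subset\Gamma_0$. For the reverse comparison, map each $v\in[\Gamma_0]_K$ to a point $\phi(v)\in\partial\Gamma_i$ within distance $C(K,\delta)$. Such a point exists by the $\delta$-book-like property (B1): every spine point is within $\delta$ of every page, so it is within $\delta+1$ of $\partial\Gamma_i$. The fibers of $\phi$ have uniformly bounded cardinality because degrees are uniformly bounded. Therefore a sum over the bigger set is bounded by a constant times a sum over images, where the constraints $d\le n$ become $d\le n+2C$. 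The $\varepsilon$-net bullet is handled the same way: each net point's ball contains a bounded number of vertices, and every vertex is within $\varepsilon$ of a net point.

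For the third bullet, replacing $n$ by $\lfloor kn\rfloor$ changes only the range constraints. The summand at time $n$ versus $kn$ is comparable after changing $c,C$, using doubling of $V_i(x,\sqrt n)$, $V_j(y,\sqrt n)$ and $V_{\min}(v,\sqrt n)$. The real issue is the pairs with $kn<d(v,w)\le n$, or the analogous pairs for $x$ and $y$; this is the step I expect to be the main obstacle. For such pairs $d_*\ge kn$, so the factor $\exp(-d_*^2/cn)\le \exp(-c'n)$. I plan to absorb these terms by showing that their total contribution is dominated, in the upper bound, by a single summand with $d_*$ of order $d(x,\Gamma_0)+d(y,\Gamma_0)+d(x,y)\ll n$. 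The argument is as follows. The number of such pairs is at most polynomial in $n$, since volumes are at most polynomial under controlled weights. The remaining factors are at most polynomial in $n$ as well, and at least polynomially small in $n$ for the near pair. Since $n\gg d(x,\Gamma_0)+\dots$, such a near pair exists in $P$. Thus $e^{-c'n}\cdot\mathrm{poly}(n)$ is beaten by the near pair's summand, and in the lower-bound direction we simply drop these terms.

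The last bullet, replacing $x,y$ by $x^*,y^*$, follows from the same bounded-perturbation estimates applied to the $x,y$ slots. One point needs care: if $x^*$ lies in a different page or in the spine, the formula is read with the appropriate index $i$. Comparability then uses that $d_i(x,v)$ and $d_\Gamma(x,v)$ are comparable under uniformity (B4). Combining the bullets in sequence gives the lemma.
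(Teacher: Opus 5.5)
Your proposal is correct and follows the same route as the paper's proof. Each factor of the summand is stable under bounded moves of $v,w$ (by doubling plus controlled weights), and the nearest-point or net map has uniformly bounded fibers. Your exponential-versus-polynomial treatment of the pairs in $P_{x,y,n}\setminus P_{x,y,\lfloor kn\rfloor}$, and of the $n$ versus $n+2C$ boundary effects, simply fills in what the paper dismisses as obvious.

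Two small corrections do not affect the argument. First, $\partial\Gamma_i$ lies in $[\Gamma_0]_1$ but not in general in $\Gamma_0$; in the lattice example $\partial\Gamma_1\subset\Gamma_2$. Second, the polynomial bound on the number of pairs comes from volume doubling of the pages, not from controlled weights alone, which only give exponential growth. The same applies to the far-versus-near comparison of summands, which should be made on ratios.
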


\begin{proof}[Proof of Lemma \ref{notsens_sum}]
    We prove the equivalence between summing over the entire $K$-neighborhood of the spine and summing over the centers of a $\varepsilon$-net of this neighborhood, as from there the other statements are obvious.

    Fix $\varepsilon >0$ and take a covering of  $\{(v,w) \in [\Gamma_0]_{K} \times [\Gamma_0]_K : d(x,v) \leq n, d(y,w) \leq n, d(v,w) \leq n\}$ by balls of radius $\varepsilon$ such that the balls of radius $\frac{\varepsilon}{2}$ are disjoint. Such a covering always exists since the graph is locally uniformly finite and each page is volume doubling. 

    Certainly summing over only the centers of the balls in the $\varepsilon$-net is smaller than summing over the entire neighborhood of the spine. On the other hand, for every pair $(v,w) \in \{(v,w) \in [\Gamma_0]_{K} \times [\Gamma_0]_K : d(x,v) \leq n, d(y,w) \leq n, d(v,w) \leq n\}$, there exists a pair of balls $B(v^*, \varepsilon), B(w^*, \varepsilon)$ in the covering such that $v \in B(v^*, \varepsilon), w \in B(w^*, \varepsilon)$. Due to volume doubling and controlled weights, all terms inside of the sum with $v, \ w$ can be replaced by the same term with $v^*, w^*$. Moreover, each $(v^*, w^*)$ only represents at most a fixed finite number of $(v,w)$ pairs. 
\end{proof}

We now return to the main proof.

\noindent\emph{Case 1: $x, \ y$ are in distinct pages away from the gluing set} 

Assume $x \in \Gamma_i,\ y \in \Gamma_j$ where $i \not = j$ and $d(x, \Gamma_0),\ d(y, \Gamma_0) > 4.$ If the spine is thick enough that $\partial \Gamma_i \cap \Gamma_j = \emptyset$ and $\partial \Gamma_j \cap \Gamma_i = \emptyset,$ then we may apply Theorem \ref{gluing_thm} with $U_1 = \Gamma_i,\ U_2= \Gamma_j$ since condition (1) of the theorem is satisfied. If this condition does not hold, instead apply Theorem \ref{gluing_thm} with $U_1 = \Gamma_i \setminus [\Gamma_0]_4$ and $U_2 = \Gamma_j \setminus [\Gamma_0]_4$. The difference in applying the theorem in these two situations is what set the sums are over; due to Lemma \ref{notsens_sum} above, it suffices to consider the first situation. 

Since $x$ and $y$ are in different pages, $p_{\Gamma_i, D}(n,x,y) = 0$ for all $n$. We are left with the three ``colorful'' terms to estimate. Since $V_{\min}$ is doubling, the spine-to-spine heat kernel estimates of $p(m,v,w)$ given in (\ref{spine_to_spine_est}) do not change when taking supremums or infimums with $\lfloor \frac{n}{4}\rfloor \leq m \leq n$. The hitting probabilities of interest are of the type $\psi_{\partial \Gamma_i}(n,x,v)$ and $\psi'_{\partial \Gamma_i}(m,x,v)$. These can be estimated using Lemma \ref{psi_summary} since by hypotheses (B3), each page $\Gamma_i$ is an inner uniform and uniformly $S$-transient subgraph of the augmented page $\widehat{\Gamma}_i.$ Moreover, hypothesis (B2) implies $\widehat{\Gamma}_i$ is Harnack. Additionally, any distances appearing in Lemma \ref{psi_summary} that ought to be taken in either $\Gamma_i$ or $\widehat{\Gamma}_i$ can be taken instead in the full graph $\Gamma$ due to the uniformity hypothesis (B4).

Substituting the estimates from the previous paragraph into Theorem \ref{gluing_thm} gives the following matching upper/lower bounds on the terms (the first line is written as in the upper bound, but a similar statement holds in the lower bound): 
\begin{align*}
\text{{\color{cb_blue}TERM A}} &\approx 2 \sum_{v \in \partial \Gamma_i} \sum_{w \in \partial \Gamma_j} \sup_{\lfloor \frac{n}{4} \rfloor \leq m \leq n} p(m,v,w) \psi_{\partial \Gamma_i}(n,x,v) \psi_{\partial \Gamma_j} (n,y,w) \\
&\approx \sum_{(v,w) \in P_{x,y,n}} \frac{C}{V_{\min}(v, \sqrt{n})} \frac{\pi(v)\, \rho_i^2(x,v)}{V_i(x, d_i(x,v))}\frac{\pi(w)\, \rho_j^2(y,w)}{V_j(y, d_j(y,w))} \exp\Big(-\frac{d_*^2}{cn}\Big)
\end{align*}
\begin{align*}
{\color{cb_teal}\text{TERM } B_x} &\approx \sum_{v \in \partial \Gamma_i} \sum_{w \in \partial \Gamma_j} \sup_{\lfloor \frac{n}{4} \rfloor \leq m \leq n} \psi'_{\partial U_2}(m,y,w) \psi_{\partial U_1}(n,x,v) \sum_{l=d(v,w)}^n p(l,v,w) \\
&\approx 
\sum_{(v,w) \in P_{x,y,n}} \frac{C\pi(w)}{V_j(y, \sqrt{n})} \frac{\pi(v) \rho_i^2(x,v)}{V_i(x, d_i(x,v))} \frac{\rho_\Gamma^2(v,w)}{V_{\min}(v, d_\Gamma(v,w))} \exp\Big(-\frac{d_*^2}{cn}\Big)
\end{align*} 
\begin{align*}
{\color{cb_magenta}\text{TERM } B_y} &\approx 
\sum_{v \in \partial \Gamma_i} \sum_{w \in \partial \Gamma_j} \sup_{\lfloor \frac{n}{4} \rfloor \leq m \leq n} \psi'_{\partial U_1}(m,x,v) \psi_{\partial U_2}(n,y,w) \sum_{l=d(v,w)}^n p(l,v,w) \\
&\approx 
\sum_{(v,w)\in P_{x,y,n}} \frac{C\pi(v)}{V_i(x, \sqrt{n})} \frac{\pi(w) \rho_j^2(y,w)}{V_j(y, d_j(y,w))} \frac{\rho_\Gamma^2(v,w)}{V_{\min}(v, d_\Gamma(v,w))} \exp\Big(-\frac{d_*^2}{cn}\Big).
\end{align*}

These are exactly the three terms appearing in the second part of (\ref{main_est}), and the first term of (\ref{main_est}) is zero since $x, y$ belong to different pages.

\noindent \emph{Case 2: $x, \ y$ are away from the same spine and in the same page}

Assume $x, \ y \in \Gamma_i$ and $d(x,\Gamma_0), d(y, \Gamma_0) > 4.$ We apply Theorem \ref{gluing_thm} with $U_2 = \Gamma_i$ and $U_1 = [\Gamma_i]_1$ (the $1$-neighborhood of $\Gamma_i$), sets which satisfy condition (2) of the theorem. It suffices to consider the situation that $\widehat{\Gamma}_i \supseteq [\Gamma_i]_1$ (otherwise take a bigger neighborhood of the page and use Lemma \ref{notsens_sum}). Then  since $\Gamma_i$ is inner uniform and uniformly $S$-transient inside a Harnack graph $\widehat{\Gamma}_i,$ it is also true that $\Gamma_i$ is inner uniform and uniformly $S$-transient inside the Harnack graph $[\Gamma_i]_1.$ Therefore, by Lemma \ref{D_approx_N},
\begin{align*}
p_{\Gamma_i, D}(n,x,y) \approx Cp_{\Gamma_i,N}(cn, x,y) \approx \frac{C}{V_{i}(x, \sqrt{n})} \exp\Big(-\frac{d_{i}^2(x,y)}{cn}\Big),
\end{align*}
since $\Gamma_i$ is itself Harnack with Neumann condition. 

Estimating the ``colorful'' terms is very similar to the first case. The hitting probabilities of $\partial \Gamma_i$ and of $\partial [\Gamma_i]_1$ are essentially the same. Again, the uniformity hypothesis (B4) lets distances in pages be replaced by distances in $\Gamma$. Therefore,
\begin{align*}
\text{{\color{cb_blue}TERM $A$}} &\approx 2 \sum_{v \in \partial [\Gamma_i]_1} \sum_{w \in \partial \Gamma_i} p(n,v,w) \psi_{\partial [\Gamma_i]_1}(n,x,v) \psi_{\partial \Gamma_i}(n, y, w) \\
&\approx \sum_{(v,w) \in P_{x,y,n}} \frac{C}{V_{\min}(v, \sqrt{n})} \frac{\pi(v) \rho_{i}^2(x,v)}{V_{i}(x, d_{i}(x,v))} \frac{\pi(w) \rho_{i}^2(y,w)}{V_{i}(y, d_{i}(y,w))} \exp\Big(-\frac{d_*^2}{cn}\Big).
\end{align*}
To understand the second approximation, observe quantities like $d_{\Gamma_i}(x,v)$ make sense when interpreted as extending $d_i$ to $[\Gamma_i]_2$. Moreover, due to uniformity, all such quantities can be replaced with with $d_\Gamma(x,v).$ Finally, the vertices summed over can be replaced with $P_{x,y,n}$ by Lemma \ref{notsens_sum}. 

The other two terms are estimated similarly:
\begin{align*}
{\color{cb_teal}\text{TERM } B_x} &\approx \sum_{v\in \partial [\Gamma_i]_1} \sum_{w \in \partial \Gamma_i} \psi'_{\partial \Gamma_i}(m,y,w) \psi_{\partial [\Gamma_i]_1} (n,x,v) \sum_{l=d(v,w)}^{n} p(l,v,w) \\
&\approx \sum_{(v,w) \in P_{x,y,n}} \frac{C \pi(w)}{V_i(y, \sqrt{n})} \frac{\pi(v) \rho_i^2(x,v)}{V_i(x, d_i(x,v))} \frac{\rho_\Gamma^2(v,w)}{V_{\min}(v, d_\Gamma(v,w))} \exp\Big(-\frac{d_*^2}{cn}\Big) 
\end{align*}
\begin{align*}
{\color{cb_magenta}\text{TERM } B_y} &\approx \sum_{v \in \partial [\Gamma_i]_1} \sum_{w \in \partial \Gamma_i} \psi'_{\partial [\Gamma_i]_1}(n,x,v) \psi_{\partial \Gamma_i} (n, y, w) \sum_{l=d(v,w)}^{n} p(l,v,w)\\
&\approx \sum_{(v,w) \in P_{x,y,n}} \frac{C\pi(v)}{V_i(x, \sqrt{n})} \frac{\pi(w) \rho_i^2(y,w)}{V_i(y, d_i(y,w))} \frac{\rho_\Gamma^2(v,w)}{V_{\min}(v, d_\Gamma(v,w))} \exp\Big(-\frac{d_*^2}{cn}\Big).
\end{align*}
Summing up all four terms found above gives precisely (\ref{main_est}) as in this case $i=j.$
\end{proof}  

\section{Example: The Spine \texorpdfstring{$\Gamma_0$}{} is finite}\label{finiteglue}

In this section, we consider the case of a book-like graph $\Gamma$ with pages $\Gamma_1, \dots , \Gamma_l$ where the gluing spine $\Gamma_0$ consists of a finite set of vertices. 

Let $o \in \Gamma_0$ be a fixed point. (Since the spine is finite, all points belonging to it are essentially the same.) Set $|x| : = \max\{1, d(x, \Gamma_0)\}$. Let $i_x$ denote the index $i$ of the page that $x$ belongs to and $V_{i_x}(y,r)$ denote the volume of the ball of radius $r$ centered at $y$ taken in the page $\Gamma_{i_x}$. If $x$ is in the spine, set $i_x = 0$ and $V_{i_x} = V_{\min}.$ 

\begin{cor}\label{graphs_finite_case}
Assume the hypotheses of Theorem \ref{main_general_thm}. In addition, assume the gluing spine $\Gamma_0$ is finite. Then for all $x,y \in \Gamma$ and sufficiently large $n,$ 

\begin{align}
\begin{split}\label{finite_est}
p(n,x,y) \approx& \frac{C \pi(y)}{V_{i_x}(x, \sqrt{n})} \exp\Big(-\frac{d_{i_{x}}^2(x,y)}{cn}\Big) \\[1ex]
&+C \Bigg[ \frac{|x|^2 |y|^2}{V_{\min}(o, \sqrt{n}) V_{i_x}(x, |x|) V_{j_y}(y, |y|)} 
+ \frac{|x|^2}{V_{j_y}(y, \sqrt{n}) V_{i_x}(x, |x|)}\\[1ex]
&\hphantom{sp}+\frac{|y|^2}{V_{i_x}(x, \sqrt{n}) V_{j_y}(y, |y|)} \Bigg]\exp\Big( - \frac{(|x|^2 + |y|^2)}{cn}\Big).
\end{split}
\end{align}
\end{cor}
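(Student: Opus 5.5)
The plan is to specialize estimate (\ref{main_est}) of Theorem \ref{main_general_thm} to a finite spine. The key observation is that when $\Gamma_0$ is finite, $P_{x,y,n}$ has boundedly many elements. Moreover, every $v,w$ appearing in it lies within a bounded distance $D_0$ of $o$, where $D_0$ depends on $\operatorname{diam}(\Gamma_0)+\delta$.

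We may replace the sum by a single term, evaluated at a representative pair $(v,w)$ near $o$. For the upper bound, the sum is bounded by $|P_{x,y,n}|$ times the maximal term. For the lower bound, we keep one pair, which is nonempty once $n \gg |x|+|y|$. By Lemma \ref{notsens_sum} this is harmless, and the same lemma lets us take $x,y$ with $d(x,\Gamma_0), d(y,\Gamma_0) > 4$ after moving them a bounded amount. If $x$ or $y$ lies in (or near) the spine, we use the convention $V_{i_x}=V_{\min}$ and check against part (2) of Theorem \ref{main_general_thm}.

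Then I would simplify each factor. First, $\pi(v),\pi(w)\approx 1$, since there are finitely many spine points and weights are controlled. Second, $\rho_\Gamma(v,w)\le C$ and $V_{\min}(v,d_\Gamma(v,w))\approx 1$, so the factor $\rho_\Gamma^2(v,w)/V_{\min}(v,d_\Gamma(v,w))$ is comparable to a constant. Third, $d_i(x,v)=|x|+O(1)$, because $v$ is within bounded distance of every point of the spine, and $|x|\ge 1$. By doubling of $V_i$, this gives $\rho_i^2(x,v)/V_i(x,d_i(x,v)) \approx |x|^2/V_{i_x}(x,|x|)$, and similarly for $y$. Fourth, $V_{\min}(v,\sqrt n)\approx V_{\min}(o,\sqrt n)$ by doubling of $V_{\min}$ and $d(v,o)\le D_0$. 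Finally, $d_*^2 \approx |x|^2+|y|^2+O(1)$. Since $n\ge 1$, the additive constant changes $\exp(-d_*^2/cn)$ only by a constant factor. With these substitutions, the bracketed sum in (\ref{main_est}) becomes exactly the bracket of (\ref{finite_est}).

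The first term requires some care. The first term of (\ref{main_est}) is $C V_i(x,\sqrt n)^{-1}\exp(-d_i^2(x,y)/cn)$ with $i=i_x$, and it vanishes when $y$ is in a different page. The statement writes a factor $\pi(y)$ here. I would absorb it via controlled weights: $\pi$ is bounded above and below on each page in the relevant comparisons, or else one reads it as the kernel $\mathcal{K}^n$ normalization. Either way it only affects constants if $\pi\approx 1$. I expect this to be the one point to double-check, and I would verify it is consistent with the paper's normalization rather than change the proof.

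The main obstacle is therefore only bookkeeping. We need uniformity of all comparisons in $x,y,n$, specifically the replacement $d_i(x,v)\approx |x|$ when $|x|$ is small. We also need the condition $n\gg d(x,y)+|x|+|y|+\delta$ from the theorem. The resulting estimate should be read as "sufficiently large $n$" in that sense.
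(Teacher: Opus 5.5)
Your route is the paper's: the corollary is obtained by specializing (\ref{main_est}). A finite spine makes all spine points interchangeable, so the sum over $P_{x,y,n}$ has boundedly many mutually comparable terms, and each factor reduces as you describe: $\pi(v),\pi(w)\approx 1$, the time-sum factor is a constant, $\rho_i(x,v)\approx|x|$, $V_{\min}(v,\sqrt n)\approx V_{\min}(o,\sqrt n)$ and $d_*^2\approx|x|^2+|y|^2$. These comparisons hold multiplicatively rather than up to an additive $O(1)$, since (B4) only gives $d_i\le C_U d_\Gamma$. That is all doubling needs, because $|x|,|y|\ge 1$.

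The one step that does not work is absorbing the factor $\pi(y)$ in the first term.
\begin{itemize}
\item Controlled weights only give $\pi(x)\approx\pi(y)$ for neighbours; they do not make $\pi$ bounded on a page.
\item Finiteness of $\Gamma_0$ controls $\pi$ only at spine points, not at $y$.
\item Reading the estimate as one for $\mathcal{K}^n(x,y)$ is inconsistent, since the bracketed terms would then also need a factor $\pi(y)$.
\end{itemize}

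With $\pi(y)$ present, the estimate is in fact false in general. Take the Metropolis weights $\widetilde\pi_i(x)=\pi_i(x)(1+|x|)^{\alpha_i}$, $\alpha_i>0$, from the first example of Section~\ref{morebooklike_ex}, with $k=0$; the paper asserts these satisfy the hypotheses. Choose $x=y$ with $|x|=R$ and $n=R^2$. Theorem~\ref{main_general_thm} gives $p(n,x,x)\approx V_i(x,R)^{-1}\approx R^{-D_i-\alpha_i}$, since the bracketed terms are of lower order. On the other hand, $\pi(y)/V_i(x,\sqrt n)\approx R^{-D_i}$.

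The specialization actually yields the first term $C\,V_{i_x}(x,\sqrt n)^{-1}\exp(-d_{i_x}^2(x,y)/cn)$ with no $\pi(y)$. This is consistent with the bracketed terms and with how the corollary is applied in (\ref{lattice_tail_est}). So the $\pi(y)$ in the statement should be treated as a misprint and dropped, not absorbed. It is harmless only when $\pi$ is bounded above and below, e.g.\ for lazy SRW on bounded-degree pages.
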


In the case $x, \ y$ do not belong to the same page, we take the convention that $d_{i_x}(x,y) = +\infty$, in which case the first term of (\ref{finite_est}) disappears. In this case, we also have $|x| + |y| \approx d_\Gamma(x,y)$. In the case $x, \ y$ are in the same page, then $d_{i_x}(x,y) = d_\Gamma(x,y)$ and $|x|+|y|$ is essentially the distance between $x$ and $y$ requiring a visit to the spine $\Gamma_0.$  

The corollary is an immediate consequence of Theorem \ref{main_general_thm}, with significant simplifications coming from the fact that $\Gamma_0$ is finite, so all points of $\Gamma_0$ can be treated as identical and all sums can be taken over all points of $\Gamma_0.$ The constants contain dependencies on $\Gamma_0,$ such as its diameter and volume. In this case, it is straightforward to deduce the spine-to-spine heat kernel estimates (\ref{central_est}) from the more general (\ref{main_est}) (and to verify that (\ref{main_est}) does not depend upon the choice of page for vertices in the spine).

For instance, if $x \in \Gamma_0,$ then $|x| = 1$ and $V_{i_x}(x, |x|) \approx V_{\min}(o, 1),$ a constant. In the situation both $x,y \in \Gamma_0,$ the terms in the corollary reduce to
\begin{align*}
p(n,x,y) \approx \frac{C}{V_{\min}(o, \sqrt{n})} \exp\Big( - \frac{\text{diam}(\Gamma_0)^2}{cn}\Big),
\end{align*}
as expected from the spine-to-spine heat kernel estimates. 

The estimates in the corollary are the discrete analog of heat kernel estimates in the case of gluing transient manifolds over compact sets found in the work of Grigor'yan and Saloff-Coste; see Theorems 4.9 and 5.10 of \cite{lsc_ag_ends}. In order to see these estimates are the same, recall that in the first term, $V_{i_x}(x, \sqrt{n})$ can be replaced by either $V_{i_y}(y, \sqrt{n})$ or $\sqrt{V_{i_x}(x, \sqrt{n}) V_{i_y}(y, \sqrt{n})}$ by changing the constant in the exponential. Due to our assumption that the volume in all pages grows at least as fast as $r^{2+\epsilon}$, the terms $H(x,t)$ appearing in \cite{lsc_ag_ends} can be approximated as $\frac{|x|^2}{V_{i_x}(x,|x|)}$. The final apparent difference is that in \cite{lsc_ag_ends}, terms such as $V_{j_y}(y, \sqrt{n})$ are instead written as $V_{j_y}(o_{j_y}, \sqrt{n})$ (where $o_{j_y}$ is a closest point to the $o$ in the page containing $y$). However, due to the volume doubling property and the fact that $|x| = \max \{1, d(o, y)\}$, it is always possible to replace one such volume by the other at the cost of changing the constants in and out of the exponential. In Corollary \ref{graphs_finite_case}, we leave the volumes centered at $x$ and $y$ because in the case the spine is \emph{not} finite, the replacement mentioned in the previous sentence can no longer be made. 

The assumptions made in \cite{lsc_ag_ends} are that each end is Harnack and transient (in the classical sense). Let us compare these to our hypotheses here. 

If $\Gamma_0$ is finite, then hypothesis (B1), that the graph be book-like, is automatically satisfied. Hypothesis (B2), that the pages (ends) are Harnack is the same. Hypothesis (B3) is that each page is inner uniform and uniformly $S$-transient in its augmented version. Since $\Gamma_0$ is finite, $\widehat{\Gamma}_i$ is transient and Harnack, and $\widehat{\Gamma}_i \setminus \Gamma_0$ is connected for all $1 \leq i \leq l,$ uniform $S$-transience, $S$-transience, and classical transience are the same. (In  \cite{ed_lsc_trans}, we stated that if $K$ is finite, then $S$-transience and classical transience are the same. We should have been more cautious, for this statement need not be true in the most general sense and our definition in \cite{ed_lsc_trans} relies heavily on the assumption that $\widehat{\Gamma}_i \setminus \Gamma_0$ is connected. However, in the case $\widehat{\Gamma}_i$ is transient and Harnack, as we are interested in here, there is no difficulty.)

However, the statement that each page be inner uniform in its augmented version as well as hypothesis (B4) that each page be uniform in the whole graph are rather subtle. First, removing a finite set from an arbitrary Harnack graph need not preserve uniformity. This is related to a property called relatively connected annuli (RCA), which is in turn related to transience. For instance, RCA holds on Harnack spaces satisfying the volume growth condition (\ref{vol_growth_cond}) \cite[Lemma 6.9]{anthony_bdryHarnack3g} (see also \cite[Theorem 1.6]{mathav_jaschek}, which also includes a capacity condition). In light of the these references, the question of determining whether a transient Harnack graph minus a finite set is automatically (inner) uniform seems a delicate one. Instead of taking a possibly lengthy detour to answer it, we instead reassure the reader with the following: While the hypotheses made here and those of \cite{lsc_ag_ends} may not be precisely the same, they are close. Moreover, if we only wished to consider finite spines as in Corollary \ref{graphs_finite_case}, we could disregard the uniformity hypotheses, and, in places where those are used, make different arguments that appeal to the finiteness of the spine. However, since our main interest in this paper is to allow for infinite spines, the uniformity hypotheses are convenient.

The results of \cite{lsc_ag_ends} also cover the case where some ends can be recurrent, provided at least end one is transient. Such cases are handled by the use of an appropriate $h$ transform to return to the setting where all ends are transient. Giving a general description of the case where only one page need be uniformly $S$-transient would be challenging due to the existence of sets that are $S$-transient but not uniformly so. There may not exist an $h$ transform capable of turning the graph into a uniformly $S$-transient one, and, indeed, the differences between (uniform) $S$-transience and classical transience means one would have to think more carefully about the existence of an appropriate $h$-transform. However, in the case the spine is finite, such difficulties do not hold and the general $h$ transform technique may be carried out. We do so in Examples \ref{Z_D_tail} and \ref{Z_D_plane}.

\section{Example: Gluing pages \texorpdfstring{$\Z^{D_i}$}{that are lattices} along a spine \texorpdfstring{$\Z^k$}{that is a lower-dimensional lattice}}\label{proto_ex}

The prototypical example of a ``book-like'' graph is that of gluing pages which are lattices over a spine of a lower-dimensional lattice. That is, $(\Gamma, \mu,\pi)$ is a book-like graph made up of pages $\Gamma_1 = \Z^{D_1}, \dots, \Gamma_l = \Z^{D_l}$ with a spine $\Gamma_0 = \Z^k$ aligning with the first $k$-coordinates of each page. We take the lazy simple random walk on all graphs. 

Let $D_{\min} = \min_{1 \leq i \leq l } D_i.$ For any $x \in \Gamma,$ let $D_x$ denote the dimension of the page $x$ belongs to, with $i_x$ denoting the index of that page. Set $|x| := \max\{1, d(x, \Gamma_0)\},$ and let $d_{+}(x,y)$ denote the minimum distance between $x$ and $y$ where the path must pass through the gluing spine $\Gamma_0.$ (If $x$ belongs to the spine, one may choose $i_x$ to be anything.) 

We have the following concrete heat kernel estimates for this example. 

\begin{cor}\label{lattice_lattice_hkest}
Let $(\Gamma, \mu, \pi)$ be the book-like graph with pages $\Z^{D_1}, \dots, \Z^{D_l}$ and spine $\Gamma_0 = \Z^k$ as described above. Assume $D_{\min} \geq k+3.$ 

Then for all $x, y \in \Gamma$ and $n \gg d(x, \Gamma_0) + d(y, \Gamma_0) + 1,$
\begin{align}\label{lattice_HK_est}
\begin{split}
p(n,x,y) \approx &\frac{C}{n^{\frac{D_x}{2}}}\exp \Big(-\frac{d_{i_x}^2(x,y)}{cn}\Big) 
+ C\Bigg[ \frac{1}{n^{\frac{D_{\min}}{2}} \, |x|^{D_x-k-2} \, |y|^{D_y-k-2}} \\[1ex]
&+ \frac{1}{n^{\frac{D_y}{2}} \, |x|^{D_x - k - 2}}
+  \frac{1}{n^{\frac{D_x}{2}} \, |y|^{D_y -k -2}}  \Bigg] \exp\Big(-\frac{d_+^2(x,y)}{cn}\Big).
\end{split}
\end{align}
\end{cor}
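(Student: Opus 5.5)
We plan to deduce the corollary from Theorem \ref{main_general_thm} by checking the hypotheses (B1)--(B4) and the volume growth condition, and then evaluating the sum over $P_{x,y,n}$ in (\ref{main_est}) explicitly for lattices.

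\emph{Hypotheses.} (B1) holds with $\delta=1$ since the spine $\Z^k$ lies in every page. For (B2), each page is a lattice $\Z^{D_i}$ minus the axis set $\Z^k\times\{0\}$, with capped edges. It is quasi-isometric to $\Z^{D_i}$, so by \cite{TC_LSC_IsoInfini} it is Harnack. Since $D_i-k\ge 3\ge 2$, removing a codimension-$(D_i-k)$ flat leaves a set that is uniform in $\Gamma$ and inner uniform in $\widehat\Gamma_i$. Explicit paths go radially away from the spine and then back, which gives (B3)--(B4) geometrically. Uniform $S$-transience of $\Gamma_i$ in $\widehat\Gamma_i=\Z^{D_i}$ with respect to $\Z^k$ follows because the projection onto the last $D_i-k\ge3$ coordinates is a (lazy, time-changed) transient walk on $\Z^{D_i-k}$. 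Hitting $\Z^k$ is hitting the origin of that projected walk, so $\psi_K(x)\le C|x|^{-(D_i-k-2)}\le 1-\varepsilon$ for $|x|$ large. The volume condition holds since $V_i(x,r)\approx r^{D_i}$ with $D_i\ge 4>2$, and $V_{\min}(v,r)\approx r^{D_{\min}}$.

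\emph{Computing the sums.} Insert $\pi\approx 1$, $V_i(x,r)\approx r^{D_i}$, $V_{\min}(v,r)\approx r^{D_{\min}}$ into (\ref{main_est}). Then sum over $v,w\in\Z^k$, using Lemma \ref{notsens_sum} to restrict to the spine. The key one-dimensional computation is
\[
\sum_{v\in\Z^k}\frac{1}{\rho^{D-2}(x,v)}\exp\Big(-\frac{d^2(x,v)}{cn}\Big)\approx \frac{1}{|x|^{D-k-2}},
\]
valid for $|x|\le\sqrt n$ and $D-2>k$. Writing $d(x,v)\approx |x|+|v-\bar x|$ with $\bar x$ the projection, the sum over $|v-\bar x|\le |x|$ contributes $|x|^k/|x|^{D-2}$. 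The tail contributes $\sum_{r>|x|} r^{k-1}r^{-(D-2)}\approx |x|^{k-D+2}$, which converges since $D\ge k+3$. The Gaussian factor can be split using $d_*^2\gtrsim d_+^2(x,y)$ together with $d_*^2\ge$ each piece, via the standard trick of keeping half the exponent.

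For the first term of (\ref{main_est}), the two hitting sums factor once we bound $\exp(-d^2(v,w)/cn)$ by one. This gives $n^{-D_{\min}/2}|x|^{-(D_x-k-2)}|y|^{-(D_y-k-2)}e^{-d_+^2/cn}$. For the lower bound, restrict to $v,w$ within $|x|$, $|y|$ of the projections of $x,y$ and along a geodesic realizing $d_+$.

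For the $B$ terms we additionally need the inner sum over $w$:
\[
\sum_{w}\frac{\rho^2(v,w)}{\rho^{D_{\min}}(v,w)}\cdot\frac{1}{n^{D_y/2}}e^{-d^2(w,y)/cn}.
\]
Summing $w\in\Z^k$ with weight $\rho^{2-D_{\min}}(v,w)$ converges and is $\approx1$, because $D_{\min}-2>k$. This is where $D_{\min}\ge k+3$ is used again. The result is $n^{-D_y/2}|x|^{-(D_x-k-2)}$, and symmetrically for $B_y$.

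\emph{Main obstacle.} We expect the hardest step to be the exponential bookkeeping: showing that after summation the Gaussian factors $\exp(-d_*^2/cn)$ collapse to exactly $\exp(-d_+^2(x,y)/cn)$ in both directions. The upper bound uses $d_*\ge d_+$ with half the exponent left to make sums converge. The lower bound needs enough $(v,w)$ pairs near an optimal path, and for $B$ terms a ball of $w$ of size $\approx1$ around $v$. We also need to handle the regime where $|x|$ or $|y|$ is comparable to $\sqrt n$, which is covered by $n\gg|x|+|y|$. Points in the spine are treated via the local Harnack reduction, as in the theorem's proof.
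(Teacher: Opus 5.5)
Your proposal is correct and follows essentially the paper's route: insert $V_i(x,r)\approx r^{D_i}$ and $V_{\min}(v,r)\approx r^{D_{\min}}$ into Theorem \ref{main_general_thm} and use $d(x,v)\approx |x|+d(v_x,v)$. For the upper bound, extract $\exp(-d_+^2/cn)$ and sum the convergent power-law series over all of $\Z^k$; this is where $D_{\min}\ge k+3$ enters, exactly as in Lemma \ref{ub_sums}. For the lower bound, restrict to windows in which every distance is $\lesssim d_+(x,y)$, taking $w$ within $O(1)$ of $v$ for the $B$ terms. The differences are minor: you verify (B1)--(B4) explicitly, which the paper takes for granted, and you use windows of radius $|x|,|y|$ around the projections instead of radius $4d(x,y)$ around geodesic crossing points. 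Also, once the Gaussian has been pulled out as $\exp(-d_+^2/cn)$ you never need $|x|\le\sqrt n$. That is fortunate, because $n\gg |x|$ does not give it.
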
 

Corollary \ref{lattice_lattice_hkest} follows from applying Theorem \ref{main_general_thm} to this example. The details of doing so are fairly complex and are given in the proof below. 

As in the previous section, it is also straightforward to verify (\ref{lattice_HK_est}) reduces to the simpler spine-to-spine estimate (\ref{central_est}). 

\begin{proof}
Many quantities appearing in Theorem \ref{main_general_thm} can be easily computed in this setting. We have $V_{i}(x,r) \approx r^{D_i}$ for all $x \in \Gamma_i, \ 1 \leq i \leq l,$ and $V_{\min}(v, r) \approx r^{D_{\min}}$ for all $v \in \Gamma_0.$ Further, since the global weight function $\pi$ is uniformly bounded, we may treat all appearances of it as a constant. As we saw in the proof of Theorem \ref{main_general_thm}, the uniformity hypothesis (B4) allows distances in any page to be replaced with the global distance in $\Gamma$. 

As in the proof of Theorem \ref{main_general_thm}, we may reduce to the situation that the vertices $x, \ y$ lie away from the spine. Again, we have two cases. 

\noindent \emph{Case 1: $x,\ y$ are in distinct pages and away from the gluing spine $\Gamma_0$}

This is the main case. In this particular setting, since the boundary of any pages is a copy of $\Z^k,$ $P(x,y,n) = \{(v,w) \in \Z^k \times \Z^k: d(x,v) \leq n, d(y,w) \leq n, d(v,w) \leq n \}.$ By estimate (\ref{main_est}) from Theorem \ref{main_general_thm}, 
\begin{align}\label{lattice_firststep}
\begin{split}p(n,x,y) \approx \sum_{(v,w) \in P_{x,y,n}} C \Bigg[&
\frac{1}{n^{\frac{D_{\min}}{2}} \, d(x,v)^{D_x-2}\, d(y,w)^{D_y-2}} \\
&+ \frac{1}{n^{\frac{D_y}{2}} \, d(x,v)^{D_x-2}\, d(v,w)^{D_{\min}-2}}\\
&+ \frac{1}{n^{\frac{D_x}{2}} \, d(y,w)^{D_y-2} \, d(v,w)^{D_{\min}-2}} 
\Bigg]\exp\bigg(-\frac{d_*^2}{cn}\bigg),\end{split}
\end{align}
where $d_*^2 = d^2(x,v)+d^2(v,w)+d^2(w,y).$

Assume $x \in \Gamma_i \setminus [\Gamma_0]_4$ and $v \in \partial \Gamma_i.$ We claim that $d(x,v) \approx |x| + d(v_x,v),$ where we recall $|x| := \max \{1 , d(x, \Gamma_0)\}$ and where $v_x$ is a vertex in $\partial \Gamma_i$ achieving $\min_{v \in \partial \Gamma_i} d(x, v) = d(x, \Gamma_0)$ (if there are multiple such vertices, pick one). In this particular case, $|x| = d(x, \Gamma_0)$. The upper bound follows from the triangle inequality, as $d(x,v) \leq d(x, v_x) + d(v_x, v) = |x| + d(v_x, v).$ The lower bound follows from the fact that $|x| = d(x, v_x) \leq d(x,v)$ by definition of $v_x,$ and also $d(v_x, v) \leq d(v_x, x) + d(x, v) \leq 2 d(x,v),$ using the triangle inequality and the definition of $v_x$ again. Hence (\ref{lattice_firststep}) becomes 
\begin{align}\label{lattice_secondstep}
\begin{split}
p(n,x,y) \approx \sum_{(v,w) \in P_{x,y,n}} C&\Bigg[ 
\frac{1}{n^{\frac{D_{\min}}{2}} \, \big[|x| + d(v_x, v)\big]^{D_x-2}\, \big[|y|+d(w_y, w)\big]^{D_y-2}}\\ 
&+ \frac{1}{n^{\frac{D_y}{2}} \, \big[|x|+d(v_x,v)\big]^{D_x-2}\, d(v,w)^{D_{\min}-2}}\\
&+ \frac{1}{n^{\frac{D_x}{2}} \, \big[|y|+d(w_y,w)\big]^{D_y-2} \, d(v,w)^{D_{\min}-2}} 
\Bigg]
\cdot \exp\bigg(-\frac{d_*^2}{cn}\bigg).
\end{split}
\end{align}
In computing the above double sum, different arguments are needed for the upper and lower bound. 

\noindent \underline{\emph{Upper bound:}} The main idea in the upper bound is that restrictions on the set $P_{x,y,n}$ can be ignored, and we can instead consider the sums over $(v,w) \in \Z^k \times \Z^k$. Additionally, due to the triangle inequality,
\[ d(x,y) \leq d(x,v) + d(v,w) + d(w,y),\]
and, since $(a+b)^2 \approx a^2 + b^2,$ the term $d_*^2$ in the exponential can be replaced with $d^2(x,y)$.

We collect some calculus facts in the following lemma. 
\begin{lemma}\label{ub_sums}
Assume $a \geq 1$ and $A,D\geq k+3.$ Let $d(\cdot, \cdot)$ denote the distance between two points in the lattice $\Z^k$ (or in some fattened version of the lattice $\Z^K$). Then there exists constants $c_1, c_2, c_3 >0$ (independent of $a$) such that
\begin{align}\label{first_sum}
&\sum_{v' \in \Z^k} \frac{1}{[a + d(v^*, v')]^{D-2}} \leq \frac{c_1}{a^{D-k-2}}, \qquad \forall v^* \in \Z^k \\\label{second_sum}
&\sum_{w \in \Z^k} \frac{1}{[1+d(v,w)]^{D-2}} \leq c_2, \qquad \forall v \in \Z^k\\\label{third_sum}
&\sum_{v \in \Z^k} \frac{1}{(a + d(v', v))^{D-2}} \sum_{w \in \Z^k} \frac{1}{[1+d(v,w)]^{A-2}} \leq \frac{c_3}{a^{D-k-2}}.
\end{align} 
\end{lemma}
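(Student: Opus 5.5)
All three bounds are elementary lattice sums. The plan is to compare each sum with an integral over $\R^k$ via a shell (dyadic) decomposition. The lattice distance on $\Z^k$ (or on a fattened copy) is comparable to the Euclidean norm. Moreover, for each $r \geq 0$ the number of $v' \in \Z^k$ with $d(v^*,v') = r$ is at most $C(1+r)^{k-1}$, uniformly in $v^*$. This is the only geometric input. Throughout, the exponent condition $D-2 \geq k+1 > k$ guarantees convergence.

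For (\ref{first_sum}) I would group the terms by $r = d(v^*,v')$ and bound the sum by
\[ \sum_{r\geq 0} \frac{C(1+r)^{k-1}}{(a+r)^{D-2}}. \]
I then split at $r=a$. For $r \leq a$ the summand is at most $C(1+r)^{k-1}a^{-(D-2)}$, so this part contributes at most $C a^{k} a^{-(D-2)} = C a^{-(D-k-2)}$, using $a \geq 1$. For $r > a$ the summand is at most $C r^{k-1} r^{-(D-2)}$. Since $D-2-(k-1) = D-k-1 \geq 2 > 1$, the tail $\sum_{r>a} r^{k-1-(D-2)}$ is at most $C a^{k-(D-2)} = C a^{-(D-k-2)}$. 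The constant depends only on $k$ and $D$, not on $a$ or $v^*$.

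Estimate (\ref{second_sum}) is the special case $a=1$ of (\ref{first_sum}), with $D$ replaced by $A$. It gives $c_2 = c_1(A)$, which is finite because $A-2 > k$.

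For (\ref{third_sum}) I would first bound the inner sum over $w$ by $c_2$, using (\ref{second_sum}) with exponent $A$; this bound is uniform in $v$. What remains is $c_2\sum_{v}(a+d(v',v))^{-(D-2)}$, and (\ref{first_sum}) with $v^* = v'$ bounds this by $c_2c_1 a^{-(D-k-2)}$.

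None of these steps is really an obstacle. The only point needing care is the uniform shell count in a fattened lattice, when that version is used. There I would note that a fattened lattice is quasi-isometric to $\Z^k$ with uniformly bounded fibres, so the shell count changes only by a constant and the same argument applies.
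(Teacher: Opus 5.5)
Your proof is correct, but it organizes the key estimate (\ref{first_sum}) differently from the paper.

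The paper places $v^*$ at the origin, writes $d(v^*,v')=|v_1'|+\cdots+|v_k'|$, and sums one coordinate at a time. It applies the one-dimensional integral comparison $\sum_{t\in\Z}(b+|t|)^{-s}\le c\,b^{-(s-1)}$ (valid for $b\ge 1$, $s>1$) $k$ times. At each stage $b=a+|v_1'|+\cdots+|v_j'|$, and the exponent drops from $D-2$ to $D-k-2$. The hypothesis $D\ge k+3$ is what keeps the exponent above $1$ at the last step.

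You sum radially instead, using the shell count $C(1+r)^{k-1}$ and splitting at $r=a$. Your handling of (\ref{second_sum}) as the case $a=1$, and of (\ref{third_sum}) (inner sum bounded uniformly in $v$, then (\ref{first_sum}) with $v^*=v'$), is the same as the paper's.

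The two routes buy different things. The paper's exploits the exact product structure of the $\ell^1$ metric on $\Z^k$ and needs no counting. Yours needs only polynomial growth, so it suits the quasi-isometric spines considered later in the paper better. It also shows plainly that $D-2>k$ is exactly what makes the tail converge.

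One correction to your last paragraph. A quasi-isometry sends a sphere $\{d(v^*,\cdot)=r\}$ only into an annulus with radii comparable to $r$. So it does not by itself preserve the $(1+r)^{k-1}$ shell bound, and ``the shell count changes only by a constant'' is not justified. What does transfer is the ball bound $\#B(v^*,r)\le C(1+r)^k$, and that is enough:
\begin{itemize}
\item bound the terms with $d(v^*,v')\le a$ by $\#B(v^*,a)\,a^{-(D-2)}$;
\item bound those with $2^ja<d(v^*,v')\le 2^{j+1}a$ by $\#B(v^*,2^{j+1}a)\,(2^ja)^{-(D-2)}$.
\end{itemize}
Since $a\ge 1$, the total is at most $C\,a^{k-(D-2)}\sum_{j\ge 0}2^{-j(D-2-k)}$, which is what (\ref{first_sum}) requires.

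The same ball count, taken in the copy of the spine containing $w$, also covers the case the paper treats in its proof of (\ref{second_sum}). There $v$ and $w$ lie in different copies of $\Z^k$ within distance $\delta$ of each other, as happens in TERM $B_x$.
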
 

\begin{proof}[Proof of Lemma \ref{ub_sums}]

Since the case $k=0$ was covered by the previous section, we may assume $k \geq 1$.

We first prove (\ref{first_sum}). Arrange $\Z^k$ so that $v^*$ is the origin and $d(v^*, v') = |v_1'| + \cdots + |v_k'|,$ where $v' = (v_1', \dots, v_k').$ Then
\begin{align}\label{k_dim_comp} 
\sum_{v' \in \Z^k} \frac{1}{[a + d(v^*, v')]^{D-2}} = \sum_{v_1' \in \Z} \cdots \sum_{v_k' \in \Z} \frac{1}{\big[a + |v_1'| + \cdots + |v_k'\big|]^{D-2}}.
\end{align} 

As we sum in each coordinate, the other coordinates are fixed, so it suffices to consider a one-dimensional sum. In that case, 
\begin{align*}
\sum_{v' \in \Z} \frac{1}{[a + |v'|]^{D-2}} &= 2 \sum_{v' = 0}^\infty \frac{1}{[a+ v']^{D-2}}
\leq \frac{2}{a^{D-2}} + \int_0^\infty \frac{2 dx}{(a+x)^{D-2}} \\[1ex]
&\leq \frac{2}{a^{D-2}} + \int_a^\infty \frac{2 du}{u^{D-2}} 
= \frac{2}{a^{D-2}} + \Big[ -\frac{2}{(D-3)u^{D-3}}\Big]_a^\infty \\[1ex]
&\leq \frac{2}{a^{D-2}} + \frac{2}{(D-3)a^{D-3}} \leq \frac{c}{a^{D-3}},
\end{align*}
where we have used $a \geq 1$ and $c$ is a constant (that depends on $D$). 

Using this computation repeatedly on equation (\ref{k_dim_comp}) yields the desired estimate (\ref{first_sum}):
\begin{align*}
\sum_{v' \in \Z^k} \frac{1}{[a + d(v^*, v')]^{D-2}}  &\leq
\sum_{v_1' \in \Z} \cdots \sum_{v_{k-1}' \in \Z} \frac{c}{[a + |v_1'| + \cdots + |v_{k-1}'|]^{D-3}} \leq \frac{c_1}{a^{D-k-2}}.
\end{align*} 

We now prove (\ref{second_sum}). The copies of $\Z^k$ that $v, w$ belong to need not be exactly the same, but for any $v \in \Z^k,$ there exists $w_v$ in the the ``$w$'' copy of $\Z^k$ that achieves the minimum distance between $v$ and that copy. We have $d(v,w) \approx d(v, w_v) + d(w_v, w)$ and we know $0\leq d(v, w_v) \leq \delta$ by the book-like graph assumption (B1). The ``$1+d(v,w)$'' in (\ref{second_sum}) accounts for the situation that $v=w_v.$ Then the desired result follows by making the denominator smaller and using (\ref{first_sum}):
\begin{align*}
\sum_{w \in \Z^k} \frac{1}{[1+d(v,w)]^{D-2}} &\leq \sum_{w \in \Z^k} \frac{1}{[1+d(w_v, w)]^{D-2}} \leq c_2.
\end{align*}

The inequality (\ref{third_sum}) follows by applying (\ref{second_sum}) to the inner sum, then applying (\ref{first_sum}) to the outer sum. 
\end{proof}

By Lemma \ref{notsens_sum}, since $x$ and $y$ belong to different pages, we may assume that sums are arranged such that $v \not = w$. Then applying Lemma \ref{ub_sums} to (\ref{lattice_secondstep}) gives
\begin{align}
\begin{split}\label{lattice_upper_nice}
p(n,x,y) \leq C\Bigg[&\frac{1}{n^{\frac{D_{\min}}{2}} \,  |x|^{D_x-k-2}\, |y|^{D_y-k-2}}
+ \frac{1}{n^{\frac{D_y}{2}} \, |x|^{D_x-k-2}}
\\&+ \frac{1}{n^{\frac{D_x}{2}} \, |y|^{D_y-k-2}} \Bigg]
 \exp\bigg(-\frac{d^2(x,y)}{cn}\bigg).
 \end{split}
\end{align}

In this case, $d(x,y) = d_+(x,y)$ and $d_{i_x}(x,y) = +\infty,$ so the first term of (\ref{lattice_HK_est}) vanishes and that estimate matches (\ref{lattice_upper_nice}).

\noindent \underline{\emph{Lower bound:}} For the lower bound, a key idea is to throw away any terms in the sum that do not count. We will see that only looking at terms in a certain ``window'' produces a lower bound matching the upper bound found above. In particular, we only consider $v,\ w$ in the gluing spine at distance on the scale of $d(x,y)$ away from $x, \ y,$ respectively (or closer).

The full sum is over the set $P_{x,y, \lfloor \frac{n}{4}\rfloor}$ since this is the lower bound. We only consider a subset of this set, which we call a ``window.'' Given $x,\ y$ in distinct pages, let $v_x,\ w_y$ denote a choice of closest points in $\Gamma_0$ to $x, y,$ respectively. Select a geodesic path between $x$ and $y.$ Since $x,\ y$ are in distinct pages, such a geodesic path must cross both $\partial \Gamma_{i_x}$ and $\partial \Gamma_{i_y},$ so we may choose vertices $v_g \in \partial \Gamma_{i_x},\ w_g \in \partial \Gamma_{i_y}$ such that $d(x,y) = d(x, v_g) + d(v_g, w_g) + d(w_g, y).$ (We use the subscript $g$ to allude to ``geodesic''.) 

Define $W_{x, y, K} := \{ v \in \Gamma_{i_x} = \Z^k : d(v,v_g) \leq K d(x,y)\},$ where $K$ is a fixed constant. We will often consider $K=4$. The set $W_{y,x,K}$ is defined analogously.

The set $W_{x,y,4}$ has the property that it contains $v_x$ since 
\[ d(v_x, v_g) \leq d(v_g,x)+d(x, v_x) \leq 2 d(x,y).\]

Moreover, if $v \in W_{x,y,4}$ then 
\[ d(x,v) \leq d(x, v_x) + d(v_x,v)\leq d(x,y) + 4d(x,y) \leq 5 d(x,y).\]
If $v \in W_{x,y,4}$ and $w \in W_{y,x,4},$ then 
\[ d(v,w) \leq d(v, v_g) + d(v_g, w_g) + d(w_g, w) \leq 9 d(x,y).\]

Therefore, provided
$\lfloor \frac{n}{4}\rfloor \geq 9d(x,y)$ it follows that $ W_{x,y,4} \times W_{y,x,4} \subseteq P_{x, y \lfloor \frac{n}{4} \rfloor}$. (This is one place where the assumption $n \gg d(x,y)$ in the theorem is relevant.) 

On the set $W_{x,y,4} \times W_{y,x,4}$, all of the distances $d(x,v), d(v,w), d(y,w)$ are controlled by $d(x,y),$ so we may again replace the sum of distances $d_*^2$ in the exponential by $d^2(x,y)$.

We now need an analog of Lemma \ref{ub_sums} for the lower bound.

\begin{lemma}\label{lb_sums} 
Let $\widehat{K}, K$ be fixed constants and $W_{x,y,K}$ be defined as above. Assume $0\leq a \leq \widehat{K}d(x,y)$ and $D \geq 3+k.$  Then there exists a constant $c_4 >0$ independent of $a$ such that 
\begin{equation}
\sum_{v \in \Z^k: d(v,v') \leq K d(x,y)} \frac{1}{[a+d(v',v)]^{D-2}} \geq \frac{c_4}{a^{D-k-2}}, \quad \forall v' \in \Z^k.
\end{equation}
\end{lemma}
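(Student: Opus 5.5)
The plan is to bound the sum below by keeping only the terms in a small ball around $v'$, namely the ball of radius comparable to $a$. Throughout I will assume $a \geq 1$. This is the only case used in the application, since there $a$ plays the role of $|x| = \max\{1, d(x,\Gamma_0)\}$. If $a<1$ were allowed, I would read the statement with $\max\{1,a\}$ in place of $a$. In that case the single term $v = v'$, which equals $a^{-(D-2)} \geq 1$, already gives the bound up to a constant.

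Set $r := \min\{1, K/\widehat{K}\}\, a$. Since $a \leq \widehat{K} d(x,y)$, we have $r \leq K d(x,y)$. So every $v \in \Z^k$ with $d(v,v') \leq r$ belongs to the summation range $\{v : d(v,v') \leq K d(x,y)\}$. All terms are nonnegative, so
\[
\sum_{v \in \Z^k:\, d(v,v') \leq K d(x,y)} \frac{1}{[a + d(v',v)]^{D-2}} \;\geq\; \sum_{v \in \Z^k:\, d(v,v')\leq r} \frac{1}{[a+d(v',v)]^{D-2}}.
\]
For each such $v$ we have $a + d(v',v) \leq a + r \leq 2a$. Hence each term is at least $2^{-(D-2)} a^{-(D-2)}$.

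It remains to count lattice points. The ball of radius $r$ in $\Z^k$ with the $\ell^1$ metric has at least $c_k \max\{1, r\}^k$ points. When $r \geq 1$ this follows because the ball contains the cube of side $\lfloor r/k \rfloor$ in each coordinate. When $r<1$ the ball still contains $v'$ itself. If $r<1$, then $a < \max\{1, \widehat{K}/K\}$ is bounded by a constant. In that case the single term $v=v'$ gives $a^{-(D-2)} \geq c\, a^{-(D-k-2)}$ with $c$ depending only on $K, \widehat{K}, k, D$. If $r \geq 1$, we have $r^k \geq \min\{1,K/\widehat K\}^k a^k$. Multiplying the count by the lower bound on each term gives
\[
c_k\, \min\{1,K/\widehat K\}^k\, 2^{-(D-2)}\, a^{k-(D-2)} = \frac{c_4}{a^{D-k-2}}.
\]
The constant $c_4$ depends only on $k, D, K, \widehat{K}$.

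There is no real obstacle in this argument. The only points needing care are to check that the small ball fits inside the window, which is exactly where the hypothesis $a \leq \widehat{K} d(x,y)$ is used, and to handle bounded $a$ separately. The hypothesis $D \geq k+3$ is not needed for this lower bound. It only ensures that the exponent $D-k-2$ is positive, so that the result matches the upper bound (\ref{first_sum}) of Lemma \ref{ub_sums}. If the spine copy of $\Z^k$ is replaced by a fattened or shifted copy, as in the proof of (\ref{second_sum}), the same counting works up to constants, because the book-like hypothesis (B1) moves points by at most $\delta$.
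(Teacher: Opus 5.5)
Your argument is correct, but it is organized differently from the paper's.

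\textbf{The paper's route.} It uses the $\ell^1$ product structure to reduce to the one-dimensional bound $\sum_{v=0}^{Kd(x,y)}(a+v)^{-(D-2)}\ge c\,a^{-(D-3)}$. This is proved in two cases:
\begin{itemize}
\item if $a\le Kd(x,y)$, keep only the terms $v\le a$;
\item if $Kd(x,y)\le a\le \widehat K d(x,y)$, keep all terms, using $a\approx d(x,y)$.
\end{itemize}
The paper then iterates this bound $k$ times. It adjusts $K,\widehat K$ at each step, because the effective $a$ grows to $a+|v_1|+\cdots+|v_j|$.

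\textbf{Your route.} You keep only the $\ell^1$-ball of radius $r=\min\{1,K/\widehat K\}a$ about $v'$, which fits in the window because $a\le\widehat K d(x,y)$. You bound every term there below by $(2a)^{-(D-2)}$ and count at least $c_k r^k$ lattice points. The single choice of $r$ absorbs the paper's two cases, and the term $v=v'$ handles bounded $a$.

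\textbf{What each buys.} Your argument is shorter and avoids the iteration with drifting constants. It also isolates the only geometric input, a volume lower bound $\#\{v\in\Gamma_0: d(v,v')\le r\}\ge c\,r^k$ for $1\le r\lesssim d(x,y)$. That is exactly what the paper's later remarks about spines quasi-isometric to $\Z^k$, half-lattices and cones tacitly require. The paper's coordinate-wise computation has the merit of mirroring the proof of (\ref{first_sum}) in Lemma \ref{ub_sums}, which makes the matching of upper and lower bounds visible. However, it relies on the exact product structure of $\Z^k$.

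\textbf{A small point.} Your restriction to $a\ge 1$ is unnecessary. For $0<a<1$ the term $v=v'$ already gives $a^{-(D-2)}\ge a^{-(D-k-2)}$.
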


\begin{proof}
Arrange the coordinate system on the $\Z^k$ so that $v'$ is the origin and  $d(v, v') = |v_1| + \cdots + |v_k| \leq Kd(x,y).$ Therefore $|v_i| \leq K d(x,y)$ for all $1 \leq i \leq k.$ The lemma follows if we can prove the one-dimensional lower bound
\begin{equation}\label{1D_lowersum}
\sum_{v=0}^{K d(x,y)} \frac{1}{[a+v]^{D-2}} \geq \frac{c}{a^{D-3}}
\end{equation}
for some constant $c>0.$
We may need to slightly adjust $\widehat{K}, K$ at each repetition of the one-dimensional sum. 

There are two cases to consider in order to prove (\ref{1D_lowersum}). 

\emph{(*)The case $a \leq Kd(x,y):$}

In this case,
\begin{align*}
\sum_{v=0}^{Kd(x,y)} \frac{1}{(a+|v|)^{D-2}}
&= \sum_{v=0}^a \frac{1}{(a+|v|)^{D-2}} + \sum_{v=a+1}^{Kd(x,y)} \frac{1}{(a+|v|)^{D-2}} 
 \geq \sum_{v=0}^a \frac{1}{(a+|v|)^{D-2}} \\[1ex]
&\geq \sum_{v=0}^a \frac{1}{2^{D-2}\, a^{D-2}} = \frac{c_4 a}{a^{D-2}}
= \frac{c_4}{a^{D-3}}. 
\end{align*} 

\emph{(**) The case $Kd(x,y) \leq a \leq \widehat{K} d(x,y):$} 

In this case $a \approx d(x,y)$ and $\tilde{k}a = Kd(x,y)$ for some value of $\tilde{k}$ (which is bounded above/below). Thus, since $a \leq a + |v| \leq (K+1)a,$
\begin{align*}
\sum_{v=0}^{Kd(x,y)} \frac{1}{(a+|v|)^{D-2}} \geq \sum_{v=0}^{\tilde{k}a} \frac{c}{a^{D-2}} = \frac{c_4}{a^{D-3}}.
\end{align*} 
\end{proof}

Using the lemma, we estimate the first term of  (\ref{lattice_secondstep}) as
\begin{align*}
{\color{cb_blue}{\text{TERM } A}} &\approx 
\sum_{(v,w) \in P_{x,y,\lfloor \frac{n}{4}\rfloor}}
\frac{C}{n^{\frac{D_{\min}}{2}} \, \big[|x| + d(v_x, v)\big]^{D_x-2}\, \big[|y|+d(w_y, w)\big]^{D_y-2}} \\
& \geq \sum_{(v,w) \in W_{x,y,4} \times W_{y,x,4}} \frac{C}{n^{\frac{D_{\min}}{2}} \, \big[|x| + d(v_x, v)\big]^{D_x-2}\, \big[|y|+d(w_y, w)\big]^{D_y-2}} \\
&\geq \frac{C}{n^{\frac{D_{\min}}{2}}|x|^{D_x-k-2}|y|^{D_y-k-2}}.
\end{align*}

To deal with the terms coming from the initial ${\color{cb_teal}{\text{TERM } B_x}}$ and ${\color{cb_magenta}{\text{TERM } B_y}}$, we look at slightly different windows. For $v \in W_{x,y,4},$ let $w_v$ denote a closest point in $\partial \Gamma_{i_y}$ to $v$. Since $v$ is in the spine and $w_v$ is a point also in the spine but close to a specific page, the $\delta$-book-like property of the spine implies $d(v,w_v) \leq \delta$. Then 
\begin{align*}
d(w_v, w_g) &\leq d(w_v, v) + d(v, v_g) + d(v_g, w_g) \leq \delta +4d(x,y) + d(x,y) \\&\leq (5+\delta) d(x,y),
\end{align*} 
where $v_g, \ w_g$ are point on a shortest (geodesic) path between $x$ and $y$ as seen in the definition of $W_{x,y,C}$. Therefore $w_v \in W_{y,x, 5+\delta}$. Again assuming $n \gg d(x,y)$, then $\{(v,w) \in W_{x,y,4} \times W_{y,x, 5+\delta}\} \subseteq P_{x,y, \lfloor \frac{n}{4}\rfloor}$. Then 
\begin{align*}
{\color{cb_teal}{\text{TERM } B_x}} &\approx \sum_{(v,w) \in P_{x,y, \lfloor \frac{n}{4}\rfloor}} \frac{1}{n^{\frac{D_y}{2}}[|x|+d(v_x,v)]^{D_x-2} \, d(v,w)^{D_{\min}-2}} \\
& \geq \frac{1}{n^{\frac{D_y}{2}}} \sum_{v \in W_{x,y,4}} \frac{1}{[|x|+d(v_x,v)]^{D_x-2}} \sum_{w \in W_{y,x,5+\delta}} \frac{1}{d(v,w)^{D_{\min}-2}}
\\
&\geq \frac{1}{n^{\frac{D_y}{2}}} \sum_{v \in W_{x,y,4}} \frac{1}{[|x|+d(v_x,v)]^{D_x-2}} \frac{1}{d(v,w_v)^{D_{\min}-2}}\\
 &\geq \frac{1}{n^{\frac{D_y}{2}}\, \delta^{D_{\min}-2} \, |x|^{D_x-k-2}} = \frac{1}{n^{\frac{D_y}{2}}\,|x|^{D_x-k-2}},
\end{align*}
Here we have first reduced the sum over $P_{x,y, \lfloor \frac{n}{4}\rfloor}$ to only be over the given windows, then used the fact that $w_v \in W_{y,x,5+\delta}$ for all $v \in W_{x,y,4},$ and finally we used the $\delta$-book-like property of the spine along with Lemma \ref{lb_sums}.

The same line of reasoning can be applied to the last term of (\ref{lattice_secondstep}):
\begin{align*}
{\color{cb_magenta}{\text{TERM } B_y}} &\approx     \frac{C}{n^{\frac{D_x}{2}} \, \big[|y|+d(w_y,w)\big]^{D_y-2} \, d(v,w)^{D_{\min}-2}} \\
&\geq \frac{C}{n^{\frac{D_x}{2}} \, |y|^{D_y-k-2}}.
\end{align*}
Putting the three terms together gives a lower bound that is of exactly the same form as the upper bound (\ref{lattice_upper_nice}), which matches the estimates in the corollary. 

\noindent \emph{Case 2: $x, \ y$ are in the same page away from the gluing spine $\Gamma_0$}

The arguments are very similar to that of the distinct pages case. Applying estimate (\ref{main_est}) from Theorem \ref{main_general_thm} and recalling $D_x=D_y$ gives 
\begin{align*}
p(n,x,y) &\approx
\frac{C}{n^{\frac{D_x}{2}}} \exp\Big(-\frac{d_{i_x}^2(x,y)}{cn}\Big) 
+\sum_{(v,w) \in P_{x,y,\lfloor \frac{n}{4}\rfloor}}
C\Bigg[ \frac{1}{n^{\frac{D_{\min}}{2}}\, d(x,v)^{i_x-2}\, d(y,w)^{i_x-2}}\\
&+ \frac{1}{n^{\frac{D_x}{2}} \, d(x,v)^{i_x-2} \, d(v,w)^{D_{\min}-2}} 
+ \frac{1}{n^{\frac{D_x}{2}} \, d(y,w)^{i_x-2} \, d(v,w)^{D_{\min}-2}} \Bigg]\\[1ex]
&\cdot \exp\bigg(-\frac{[d^2(x,v)+d^2(v,w)+d^2(y,w)]}{cn}\bigg).
\end{align*}

The first term is precisely the first term of (\ref{lattice_HK_est}). For the upper bound, it is obvious that the numerator of the exponential is controlled by $d_+(x,y),$ since this quantity can be thought of as minimizing over all paths between $x$ and $y$ that hit a $v$ and a $w$ (it could be that $v=w$). The desired upper bound follows by repeating the same arguments as in Case 1 to compute the sums.

For the lower bound, we again use the idea of only looking at the terms in a certain ``window.'' Take a path between $x$ and $y$ that hits $\Gamma_0$ and achieves $d_+(x,y).$ Then $d_+(x,y) = d(x,v_g) + d(v_g,y)$ for some $v_g \in \Gamma_0.$ Then define $W^+_{x,y,K} := \{ v \in \partial \Gamma_{i_x} : d(v,v_g) \leq K d_+(x,y)$. Note $|x|, \ |y|$ are controlled by $d_+$. Repeating the lower bound arguments of Case 1 using the windows $W^+$ gives a bound of the same form.   
\end{proof}

\begin{remark}
Consider gluing a finite number of lattices of dimension at least three via a point. One continuous analog of this is that of manifolds glued via a compact set, which is described in Grigor'yan and Saloff-Coste in \cite{lsc_ag_ends}. In \cite{ooi_bmvd}, large time estimates for $\R^d$ and $\R^{d'}$ glued via a compact set (point) via Dirichlet forms are given in Theorem 1.7. Moreover, gluing a finite number of lattices of dimension at least four via a line can be thought of as multiplying the situation of the previous sentence by an extra dimension (and so on). The results of Corollary \ref{lattice_lattice_hkest} match these results in the continuous case (see Sections 1.2 and 4.5 of \cite{lsc_ag_ends} and Theorem 1.1 of \cite{ag_ishiwata} with $\alpha=0$).
\end{remark}

\begin{remark}
In addition to matching the results expected from \cite{lsc_ag_ends, ag_ishiwata, ooi_bmvd}, the results of Corollary \ref{lattice_lattice_hkest} are sufficiently robust as to allow for certain perturbations which destroy the reduction of dimension argument. For instance, our corollary extends to the case where the pages and spine are only quasi-isometric to lattices. The spine $\Gamma_0$ also need not have such precise symmetry. Below we describe a more general gluing of not-quite lattices for which Corollary \ref{lattice_lattice_hkest} still holds. 

As above, let $k$ be the dimension of the gluing spine and $D_1, \dots, D_l$ be positive integers greater than or equal to $k+3$ be the dimensions of $l$ pages. Let $M = \max \{D_1, \dots, D_l\}.$ Consider pages $\Gamma_{1}, \dots, \Gamma_l$ such that $\Gamma_i$ is quasi-isometric to $\Z^{D_i}$ for all $1\leq i \leq l$ as described below.

\begin{definition}[Quasi-isometry]\label{quasi-iso} Let $(M_1, d_1, \pi_1)$ and $(M_2, d_2, \pi_2)$ be two metric measure spaces. We say $M_1$ and $M_2$ are \emph{quasi-isometric} if there exists a function $\Phi : M_1 \to M_2$ such that 
\begin{enumerate}[(1)]
\item There exists $\varepsilon > 0$ such that the $\varepsilon$-neighborhood of the image of $\Phi$ is equal to $M_2.$
\item There exist constants $a, b$ such that 
\[ a^{-1} d_1(x,y) - b \leq d_2(\Phi(x), \Phi(y)) \leq a d_1(x,y)+b \quad \forall \ x,y \in M_1.\]
\item There exists a constant $C_q >0$ such that 
\[ \frac{1}{C_q} \pi_1(x) \leq \pi_2(\Phi(x)) \leq C_q \pi_1(x).\]
\end{enumerate}
We call such a function $\Phi$ a \emph{quasi-isometry}.
\end{definition}

Defining the gluing spine in this situation is more subtle. In $\R^{M},$ take a (possibly affine) subspace of dimension $k;$ call it $S.$ Fix $\varepsilon_s,\ \varepsilon_r >0.$ To construct $\Gamma_0,$ proceed as follows. First take the $\varepsilon_s$ neighborhood of $S, \ N(S, \varepsilon_s).$ Then take a discrete subset $\Gamma_0$ of points in $N(S, \varepsilon_s).$ Turn $\Gamma_0$ into a graph (also denoted $\Gamma_0$) by connecting two vertices (elements) in $\Gamma_0$ via an edge if and only if they are at distance less than $\varepsilon_r$ from each other in $\R^M.$ We require $\Gamma_0$ to be such that, as a graph, it is quasi-isometric to $N(S, \varepsilon_s).$

As in Section \ref{glue_setup}, to describe the gluing, we need identification maps (bijections) between subsets of $\Gamma_0$ and $\Gamma_i.$ We will require all such subsets to also be quasi-isometric to $N(S, \varepsilon_S)$. 

Although the above description is somewhat complicated, the proof of Corollary \ref{lattice_lattice_hkest} remains essentially unchanged. On pages that are quasi-isometric to square lattices, volumes and distances only change up to constants (depending on the quasi-isometry constants). From Lemma \ref{notsens_sum}, the computation of sums over $\Z^k$ or subsets thereof has sufficient flexibility that summing over a spine only quasi-isometric to $\Z^k$ does not change the general character. (Indeed, as seen in the lower bound arguments, the important terms all come from only a small part of the sum. In this new quasi-isometric setting, there must still be sufficient points in the important windows for the computations to go through (modulo changes in constants, which we did not track in the first place). 
\end{remark}

\section{Further examples}\label{morebooklike_ex} 

In this section, we consider further applications of Theorem \ref{main_general_thm}.

\begin{example}
    Consider a book-like graph with pages $(\Gamma_i, \mu^i, \pi_i), \ 1 \leq i \leq l$. Let $\Gamma_i = \Z^{D_i}$ and $(\mu^i, \pi_i)$ describe the lazy simple random walk on $\Z^{D_i}$. In other words, $\mu^i \equiv 1$ and $\pi_i$ is two times the degree of the lattice $\Z^{D_i}$. Fix an integer $k\geq 0$ and a spine $\Gamma_0 = \Z^k$, and assume $D_{\min} \geq k+3$. We give a way to construct another random walk structure on the pages,

    For each $1 \leq i \leq l,$ fix $\alpha_i >0$. Define $\psi_i(x) = (1+|x|)^{\alpha_i},$ where $|x| = d(x,\Gamma_0).$ Consider the graph $(\Gamma_i, \widetilde{\mu}^i, \widetilde{\pi}_i)$ given by 
    \[ \widetilde{\pi}_i(x) = \pi_i(x) \psi_i(x) , \qquad \widetilde{\mu}^i_{xy} = \mu^i_{xy} \min\{\psi_i(x), \psi_i(y)\}.\]
    These weights are controlled and uniformly lazy, and formula (\ref{MK_formula}) then defines a Markov kernel $\widetilde{K}$ on $\Gamma_i$. This is the well-known Metropolis chain, see for instance Example 6.1 of \cite{pd_khe_lsc_analytic}. A random walk on $(\Gamma_i, \widetilde{\mu}^i, \widetilde{\pi}_i)$ is biased against moving towards the gluing spine $\Gamma_0$.

    Consider gluing these pages with new random walk structures together along $\Gamma_0$. The underlying geometry of the graph is unchanged, so the glued graph is still book-like and the uniformity hypotheses regarding the spine still hold. Each page $(\Gamma_i, \widetilde{\mu}^i, \widetilde{\pi}_i)$ remains Harnack through arguments as in \cite{ag_lsc_harnackstability}; indeed, as seen there, $\alpha_i$ can also be taken to be negative. The spine $\Z^k$ is even more uniformly $S$-transient in each page since the volume of the overall page has gotten bigger (as $\widetilde{\pi}_i \geq \pi_i$ due to $\alpha_i$ positive). Therefore, this example satisfies our hypotheses, and Theorem \ref{main_general_thm} and all of our methods discussed above still apply. However, computations as done in Section \ref{proto_ex} would become more complicated. 
\end{example}

\begin{example}[Gluing a copy of $\Z^4$ and $\Z^5$ via two arbitrary points]\label{lattice_2arb}
In fact, as mentioned in Remark \ref{alpha_conn}, the hypothesis that the spine be $\alpha$-connected is not necessary. Consider the following relatively simple example.

Let $\Gamma_1 = \Z^4$ with lazy simple random walk and pick two points $o_1, o_2 \in \Z^4$. Glue $\Gamma_1$ to a second page $\Gamma_2  = \Z^5$ with lazy simple random walk by identifying the points $o_1 \in \Z^4$ and $(o_1,0) \in \Z^5$, as well as the points $o_2 \in \Z^4$ and $(o_2, 0) \in \Z^5$. (We take the last coordinate in the $\Z^5$ points to be zero for simplicity, but this is not necessary.) This creates a book-like graph with a spine $\Gamma_0 = \{ o_1, o_2\}$.  The spine is finite, but $\alpha := d_\Gamma(o_1, o_2)$ may tend toward infinity if the points $o_1, o_2$ are moved further and further apart.

While we no longer wish to apply the reasoning of Corollary \ref{graphs_finite_case}, which explicitly thinks of the spine as having a fixed and unchanging diameter, the conclusion of this corollary nevertheless remains true. 

For instance, if $x \in \Z^4, \ y \in \Z^5,$ then for all sufficiently large $n,$
\[ p(n,x,y) \approx C\bigg[\frac{1}{n^{5/2}|x|^2} + \frac{1}{n^2 |y|^3 }\bigg] \exp\Big(-\frac{d^2(x,y)}{cn}\Big),\]
where $|x| := \max\{1, d(x, \Gamma_0)\}$. The quantity $\alpha$ does not appear explicitly in this estimate, but it is implicitly present. For instance, if $x, y$ are both closer to $o_1$ than $o_2,$ then in the computation of the sums appearing in Theorem \ref{main_general_thm}, the terms coming from the pair $(o_1, o_1)$ are largest and one can essentially ignore the second point. On the other hand, if say $d(x, \Gamma_0)=o_1$ and $d(y,\Gamma_0) = o_2,$ it still must be that a geodesic path between $x$ and $y$ contains either $o_1$ or $o_2$. Without loss of generality, assume it contains $o_1$ so that $d(x,y) = d(x, o_1) + d(o_1,y).$ Then $\alpha = d(o_1, o_2) \leq d(o_1, x) + d(x, y)+d(y,o_2)\leq 3d(x,y)$, that is, the distance between the two points in the gluing set is controlled by $d(x,y)$ and the terms coming from the pair $(o_1,o_2)$ will dominate.

By similar arguments, if $x,y \in \Z^5$ then for $n$ sufficiently large,
\begin{align*}
p(n,x,y) &\approx \frac{C}{n^{5/2}} \exp\Big(-\frac{d^2(x,y)}{cn}\Big) \\
&+ C \bigg[\frac{1}{n^2|x|^3|y|^3} + \frac{1}{n^{5/2}|x|^2} + \frac{1}{n^{5/2}|y|^3}\bigg] \exp\Big(-\frac{d_+^2(x,y)}{cn}\Big).
\end{align*}
As in the previous case, it may be that the value of $\alpha$ is controlled by $d_+(x,y)$ (again consider a shortest possible path between $x$ and $y$ that visits at least one of $o_1, o_2$). On the other hand, it may be that both $x,y$ are much closer to say $o_1$ than $o_2$, in which case $\alpha$ is somewhat irrelevant as the terms involving $o_2$ may be ignored (as they are dominated by other terms in the sum). 

Similar types of estimates can be given for other locations of $x$ and $y$. Of course, this example can be generalized to more pages of varying dimensions or to gluing over a finite number of points (that may be increasingly far apart).

\end{example}

\begin{example}[Gluing lattices via a sparse line]\label{lattice_sparse}

Consider the situation of the previous example with pages $\Z^4$ and $\Z^5$ with lazy simple random walk. We now wish to consider gluing over an infinite set of points that become sparser. For instance, consider the sparse line given by taking points with first coordinate coordinate $2^n$ for some positive number $n$ and all other coordinates zero. In this case, Theorem \ref{main_general_thm} still applies, and one could still consider the ``windows'' type argument given in Section \ref{proto_ex} to think about what points in the spine should ``count" in the final sum. However, what points ``count" will depend heavily on the location of both $x$ and $y$ as well as on $n$. Moreover, the calculus arguments from Section \ref{proto_ex} that relied upon having a line of points no longer hold. Writing the estimates from Theorem \ref{main_general_thm} in a more readable form would take a non-trivial amount of work and would involve many cases.

Theorem \ref{main_general_thm} will still hold if we glue over sparse lines with less homogeneity in the spread of their points or over sparse higher dimensional lattices (provided the dimension of the pages is sufficiently large). However, it becomes even less tractable to write down any more concrete estimates. 
\end{example}

\begin{example}[Gluing two $\Z^4$'s via a half-line]
Consider two copies of the lattice $\Z^4$ with lazy SRW. Identify the non-negative part of the $x_4$-axis in each copy. In this case, we have two pages, each a copy of $\Z^4,$ and $\Gamma_0$ is a half-line. This glued graph satisfies hypotheses (B1)-(B4) and the additional volume growth condition (\ref{minvol_faster2}) so that Theorem \ref{main_general_thm} applies. In fact, the computations of a more concrete estimate in this setting are essentially the same as those in Corollary \ref{lattice_lattice_hkest}, except that the quantities $|x|, |y|$ appearing in (\ref{lattice_HK_est}) now represent the distance of $x,y$ to the $x_4$-half axis (as opposed to just the $x_4$-axis). This difference also comes into play in computing $d_+.$ While the estimate has the same \emph{form} as (\ref{lattice_HK_est}), the terms appearing are calculated differently. In particular, if $x=(x_1,x_2,x_3,x_4)$ where $x_4$ is large and negative, then $|x|$ is much larger when gluing over the half-line than when gluing over the line. 

This example generalizes to gluing $\Z^{D_1}, \dots, \Z^{D_l}$ via a $k$ dimensional half-lattice by identifying the half-lattice made up of the last $k$ coordinates where the very last coordinate is positive. Again, it is necessary that $\min_{1 \leq i \leq l} D_i \geq k +3$ in order for hypothesis (B3) to hold. Then again the estimate looks like that of Corollary \ref{lattice_lattice_hkest}. 

The above paragraphs also hold if we were to glue lattices along a fattened half-lattice or along a more general set that is quasi-isometric to a half-lattice. 
\end{example}

\begin{example}[Gluing lattices via a two-dimensional cone]
Consider taking a set of points in a copy of $\Z^2$ that corresponds to a cone. For instance, take a cone of aperture $\alpha$ in $\R^2,$ and then take the set of lattice points lying inside of that cone. Consider $l$ lattices $\Z^{D_i}$ with $D_i \geq 5$ for all $1 \leq i \leq l$ and identify the chosen cone points across the last two coordinate axes. This gives a book-like graph with $\Gamma_i = \Z^{D_i}$ and $\Gamma_0$ the chosen cone. Once again, we can apply both Theorem \ref{main_general_thm} and the reasoning of Corollary \ref{lattice_lattice_hkest} to get heat kernel estimates of the form (\ref{lattice_HK_est}), with distances interpreted appropriately. (Note the arguments of Section \ref{proto_ex} show that the overall character of the sums is controlled only by terms in a particular window. In any such window, there are enough points in the cone to make the overall approximation of the sum unchanged.) 

\end{example}

\begin{example}[$\Z^D$ with a half-line tail]\label{Z_D_tail}
Consider gluing a lattice $\Z^D$ with $D \geq 3$ to the half line $\Z_{\geq 0}$ by identifying their origins and taking the lazy simple random walk on each page. The resulting graph $(\Gamma, \mathcal{K}, \pi)$ is book-like, but $\Z_{\geq 0}$ is $S$-recurrent, so hypothesis (B3) is not satisfied. However, by using Doob's $h$-transform, we can turn this example into one that does resemble our hypotheses. This example is an extension of Example 7.8 of \cite{FK_glue}, which used the $h$-transform technique to determine that if $o$ is the origin, then
\[ p(n,o,o) \approx \frac{C}{n^{3/2}}.\]

As in that example, we construct a positive harmonic function $h$ on $\Gamma$. Since $D \geq 3, \ \Z^D$ admits a Green function $G_{\Z^D}(x,o) = \sum_n p_{\Z^D}(n, x,o)$, where $o$ is the origin. The Green function is harmonic on $\Z^d \setminus \{ o\},$ superharmonic on all of $\Z^3,$ has a finite value at the origin, and can be approximated as 
\[ G_{\Z^d}(x,o) \approx \begin{cases}\frac{1}{|x|^{D-3}}, & x \not = o \\[1ex] \sum_{n} \frac{1}{n^{D/2}}, & x=o. \end{cases}\]
Recall $|x|$ is the distance from $x$ to the gluing spine, so in this case $|x| = d(x,o)$. Notice $G_{\Z^d}(x,o) \to 0$ as $|x| \to \infty$. All harmonic functions on the half-line are linear.

Choose $a>0$ such that
\[ h(x) = \begin{cases} 1+ G_{\Z^D}(x,o), & x \in \Z^D \setminus \{o\} \\ 1+ G_{\Z^D}(o,o) , & x =o \\ ax + 1+ G_{\Z^D}(o,o), & x \in \Z_{\geq 0} \end{cases}\]
is a harmonic function on $\Z^D$ with a $Z_{\geq 0}$ tail. Such a value $a>0$ exists since $G_{\Z^D}$ is superharmonic. 

Consider the $h$-transform of $(\Gamma, \mathcal{K}, \pi)$, denoted $(\Gamma_h, \pi_h, \mu_h)$ where
\begin{align*}
    &\Gamma_h = \Gamma \text{ (as graphs)}\\
    &\pi_h(x) := h^2(x)\pi(x) \quad \forall x  \in \Gamma \\
    &\mathcal{K}_h(x,y) = \frac{1}{h(x)} \mathcal{K}(x,y) h(y) \quad \forall x,y \in \Gamma.
\end{align*}
The $h$-transform graph $(\Gamma_h, \pi_h, \mu_h)$ has controlled weights and is uniformly lazy since $(\Gamma, \mathcal{K}, \pi)$ satisfies those hypotheses and $h$ is harmonic. The heat kernel on the original book-like graph is related to the $h$-transform graph via the formula
\[ p_\Gamma(n,x,y) = h(x)h(y) p_{\Gamma_h}(n,x,y).\]
The technique of using $h$-transforms to move from recurrent spaces to transient ones is well-studied. It follows that if $\Gamma$ has Harnack pages, then the same is true of $\Gamma_h,$ and changing the weights does not change (inner) uniformity. It also follows from  Theorem 2.9 of \cite{ed_lsc_trans} that the pages of $\Gamma_h$ are uniformly $S$-transient, since the volume in the $\Z_{\geq 0}$ page now behaves like volume on $\Z^3.$

Since $\Gamma_0$ is finite, Corollary \ref{graphs_finite_case} applies to the graph $(\Gamma_h, \mathcal{K}_h, \pi_h)$. In general, let $V_{i_x, h}$ denote the volume of the page to which $x$-belongs with respect to $\pi_h$. Then: 
\begin{align}
\begin{split}\label{lattice_tail_est}
    p_\Gamma(n,x,y) &= h(x)h(y) p_{\Gamma_h}(n,x,y) \approx \frac{Ch(x)h(y)}{V_{i_x, h}(x, \sqrt{n})} \exp\Big(-\frac{d_{i_{x}}^2(x,y)}{cn}\Big) \\[1ex]
&+Ch(x)h(y)\Bigg[ \frac{|x|^2 |y|^2}{V_{\min, h}(o, \sqrt{n}) V_{i_x, h}(x, |x|) V_{j_y, h}(y, |y|)} 
\\&+ \frac{|x|^2}{V_{j_y, h}(y, \sqrt{n}) V_{i_x, h}(x, |x|)}
+\frac{|y|^2}{V_{i_x, h}(x, \sqrt{n}) V_{j_y, h}(y, |y|)} \Bigg]\\[1ex]&\cdot\exp\Big( - \frac{(|x|^2 + |y|^2)}{cn}\Big).
\end{split}
\end{align}

If $x \in \Z^d$, then $h(x) \approx 1$ and $V_{i_x,h}(x,r) \approx r^d$. If $x \in \Z_{\geq 0},$ then  $h(x) \approx |x|$ and $V_{i_x, h}(x,r) \approx r(|x| + r)^2.$ Therefore $V_{\min, h}(o, \sqrt{n}) \approx n^{3/2}$. Notice $|x| = x$ for points in the half-line tail. 

Based on what pages $x, \ y$ are in, the general estimate (\ref{lattice_tail_est}) simplifies as written below. We have only written the terms that contribute, and in some cases, we have used the volume doubling property combined with a change of constant in the exponential to rewrite terms in a different way. These results match the continuous setting results of Example 6.11 of \cite{lsc_ag_ends} when $d=3$ (manifolds) and of Theorem 1.1 of \cite{ooi_bmvd} (Dirichlet forms). 
\begin{enumerate}[(1)]
\item As seen previously,
\[ p(n,o,o) \approx \frac{C}{n^{3/2}}.\]
\item If $x, y \in \Z_{\geq 0}$ (both in the half-line tail),
\begin{align*}
    p(n,x,y) &\approx \frac{C|x||y|}{\sqrt{n}\, (|x|+\sqrt{n})^2} \exp\Big(-\frac{d^2(x,y)}{cn}\Big)\\
    &\approx \frac{C|x||y|}{\sqrt{n}(|x|+\sqrt{n})(|y|+\sqrt{n})} \exp\Big(-\frac{d^2(x,y)}{cn}\Big) 
\end{align*}
\item If $x,y \in \Z^d$ (both in the lattice)
\begin{align*}
    p(n,x,y) &\approx \frac{C}{n^{D/2}}\exp\Big(-\frac{d^2(x,y)}{cn}\Big)
    \\&+ \frac{C}{n^{3/2} |x|^{D-2} |y|^{D-2}}\exp\Big(-\frac{(|x|^2+|y|^2)}{cn}\Big)
\end{align*}
\item If $x \in \Z_{\geq 0}, \ y \in \Z^d$ (different pages case)
\begin{align*}
    p(n,x,y) &\approx C\Bigg[\frac{1}{n^{D/2}} + \frac{|x|}{\sqrt{n} \, (|x|+\sqrt{n})^2 |y|^{D-2}}\Bigg]\exp\Big(-\frac{d^2(x,y)}{cn}\Big)\\
    &\approx  C\Bigg[\frac{1}{n^{D/2}} + \frac{|x|}{n^{3/2} |y|^{D-2}}\Bigg]\exp\Big(-\frac{(|x|^2+|y|^2)}{cn}\Big)
\end{align*}
\item In the case that $x$ is in one page, while $y=o,$ (or vice versa), then if $x \in \Z_{\geq 0},$ cases (2) and (4) both give 
\[ p(n,x,o) \approx \frac{C|x|}{n^{3/2}} \exp\Big(-\frac{|x|^2}{cn}\Big).\]
If instead $x \in \Z^d$, then cases (3) and (4) both yield
\[ p(n,x,o) \approx C\bigg[ \frac{1}{n^{D/2}} + \frac{1}{n^{3/2}|x|^{D-2}}\bigg]\exp\Big(-\frac{|x|^2}{cn}\Big). \]
\end{enumerate}
\end{example}

\begin{example}[Gluing $\Z^D$ to $\Z^2$ via a point]\label{Z_D_plane}
Consider two pages, $\Gamma_1 = \Z^D$ with $D\geq 3$ and $\Gamma_2 = \Z^2,$ both with lazy SRW. Construct a glued graph $\Gamma$ by identifying the origins of these two lattices; call the single origin $o.$ As in the previous example, the glued graph $(\Gamma, \mathcal{K}, \pi)$ satisfies hypotheses (B1), (B2), and (B4), but hypothesis (B3) is not satisfied since $\Z^2$ is $S$-recurrent with respect to any finite set of vertices. 

Let $g(x) = \sum_{n=0}^\infty [p_{\Z^2}(n,o,o)-p_{\Z^2}(n,o,x)]$. This is a harmonic function on $\Z^2 \setminus \{o\}$, and it is known that $g(x) \approx \log(|x|),$ where $|x|$ denotes the distance from $x$ to the origin $o$. (See e.g. Spitzer \cite{Spitzer} or Lawler and Limic Chapter 6 \cite{lawler_limic}). 

As in the previous example, it is possible to choose $a>0$ such that 
\[ h(x) = \begin{cases} 1+ G_{\Z^D}(x,o), & x \in \Z^D \setminus \{o\} \\ 1+ G_{\Z^D}(o,o), & x =o \\ ag(x)+1+G_{\Z^D}(o,o), & x \in \Z^2 \setminus \{o\}\end{cases}\]
is a harmonic function on the glued graph $\Gamma$.

The $h$-transform $(\Gamma_h, \mathcal{K}_h, \pi_h)$ satisfies hypotheses (B1)-(B4). Note $V_{i_x, h}(x, r) \approx r^2 \log^2(1+|x|+r)$ for $x \in \Z^{2}$. That hypothesis (B3) is now satisfied follows from Theorem 2.9 of \cite{ed_lsc_trans}, and in Example 7.9 of \cite{FK_glue}, we found 
\[ p(n,o,o) \approx \frac{C}{n \log^2(1+n)}.\]

Note that $(\Gamma_h, \mathcal{K}_h, \pi_h)$ \emph{does not} satisfy the volume growth condition (\ref{minvol_faster2}), since in the $\Z^2$ page, volume grows like a power of $2$ times log squared. Therefore, we cannot directly apply Theorem \ref{main_general_thm} or Corollary \ref{graphs_finite_case}. However, we can apply the ideas of the proof of Theorem \ref{main_general_thm} to the abstract gluing formula Theorem \ref{gluing_thm} without difficulty. 

Using the estimates from Section \ref{term_estimates} and the central bound above, we have: 
\begin{align*}
    &p_h(n,o,o) \approx \frac{C}{n \log^2(1+n)} \\
    &\sum_{l=0}^n p(l,o,o) \approx C \\
    &\psi'_{\partial \Gamma_1, h}(n,x,o) \approx \frac{C}{n^{D/2}} \exp\Big(-\frac{|x|^2}{cn}\Big)\\
    &\psi'_{\partial \Gamma_2, h}(n,x,o) \approx \frac{C}{n \log^2(1+|x|+\sqrt{n})} \exp\Big(-\frac{|x|^2}{cn}\Big)\\
    &\psi_{\partial \Gamma_1, h}(n,x,o) \approx \frac{C}{|x|^{D-2}} \exp\Big(-\frac{|x|^2}{cn}\Big)\\
    &\psi_{\partial \Gamma_2, h}(n,x,o) \approx \frac{C}{\log^2(1+|x|}\exp\Big(-\frac{|x|^2}{cn}\Big) + \Big( \frac{C_1}{\log(1+|x|)} - \frac{C_2}{\log(1+n)} \Big)_+\\
    &h(x) \approx 1 \ \forall x \in \Z^D; \qquad h(x) \approx \log(1+|x|) \ \forall x \in \Z^2
\end{align*}
In the second term of $\psi_{\partial_2}(n,x,o)$, one can multiply by $\exp(-|x|^2/cn)$ or not; since this term came from the portion of the sum where $|x|^2 \leq n,$ the exponential is large. 

Substituting these estimates into Theorem \ref{gluing_thm} gives the following heat kernel estimates based on the location of $x$ and $y$:

\begin{enumerate}[(1)]
    \item The central estimate, mentioned above, is 
    \[ p(n,o,o) \approx \frac{C}{n\log^2(1+n)}.\]
    \item If $x, y \in \Z^2$,
    \[p(n,x,y) \approx \frac{C\log(1+|x|) \log(1+|y|)}{n\log(1+|x|+\sqrt{n}) \log(1+|y|+\sqrt{n})} \exp\Big(-\frac{d^2(x,y)}{cn}\Big).\]
    Here the denominator is written as $\sqrt{V_{i_x, h}(x, \sqrt{n}) V_{i_y,h}(x, \sqrt{n})}$; however, since $(\Z^2, \pi_h, \mu_h)$ is a Harnack graph with doubling volume function, it is possible to replace the denominator with either $V_{i_x}(x, \sqrt{n}) \approx n \log^2(1+|x|+\sqrt{n})$ or $V_{i_y}(y, \sqrt{n}) \approx n \log^2(1+|y|+\sqrt{n})$, up to changes of the (unwritten) constants inside and outside the exponential.
    \item If $x,y \in \Z^D,$
    \begin{align*}
    p(n,x,y) &\approx \frac{C}{n^{D/2}}\exp\Big(-\frac{d^2(x,y)}{cn}\Big) \\&+ \frac{C}{n\log^2(1+n)|x|^{D-2}|y|^{D-2}}\exp\Big(-\frac{(|x|^2+|y|^2)}{cn}\Big).
    \end{align*}
    \item If $x \in \Z^2, y \in \Z^D,$
    \begin{align*}
    p(n,x,y) \approx C\bigg[&\frac{\log(1+|x|)}{n\log^2(1+n) |y|^{D-2}} \\&+ \frac{1}{n^{D/2}}\bigg( \frac{1}{\log(1+|x|)} + \Big( 1-\frac{\log(1+|x|)}{\log(1+n)}\Big)_+ \bigg)\bigg] \exp\Big(-\frac{(|x|^2+|y|^2}{cn}\Big)
    \end{align*}
    \item Suppose $x$ is in one page while $y=o.$ If $x \in \Z^2$ and $y=o$, then (2) and (4) give
    \begin{align*}
    p(n,x,o) &\approx \frac{C\log(1+|x|)}{n\log^2(1+|x|+\sqrt{n})}\exp\Big(-\frac{|x|^2}{cn}\Big)\\
    &\approx \frac{C\log(1+|x|)}{n\log^2(1+n)} \exp\Big(-\frac{|x|^2}{cn}\Big).
    \end{align*}
    While it may not be immediately obvious that these two estimates are the same, the exponential makes up for the difference; recall the equivalent ways to write (2). 
    
    If $x \in \Z^D$ and $y=o,$ then (3) and (4) give
    \[ p(n,x,o) \approx C\bigg[\frac{1}{n\log^2(1+n) |x|^{D-2}} + \frac{1}{n^{D/2}} \bigg]\exp\Big(-\frac{|x|^2}{cn}\Big).\]
\end{enumerate}
Again, these estimates match the continuous case results of Grigor'yan and Saloff-Coste (Example 6.14 of \cite{lsc_ag_ends}) and of Ooi (Theorem 1.6 of \cite{ooi_bmvd}). 
    
\end{example}

\begin{remark}

As we have seen in the previous two examples, our technique can handle graphs with $S$-recurrent pages, so long as we are capable of constructing a global harmonic function $h$ on the glued graph such that all pages of the $h$-transform graph are uniformly $S$-transient. The two particular examples we considered were simple enough that this was fairly straightforward. However, in general, showing the existence and desirable properties of such an $h$ is a harder question. This is especially so because as we showed in our previous paper \cite{ed_lsc_trans}, there exist graphs that are $S$-transient but not uniformly so. 

Consider for instance the case of a graph with one page that is $S$-transient but not uniformly so and all other pages uniformly $S$-transient. This graph should be transient in the classical sense, and therefore all harmonic functions are roughly constant. Therefore it is impossible to use an $h$-transform to get a graph with all \emph{uniformly} $S$-transient pages. 

One then imagines the difficulty of showing in a more general situation that the $h$-transform of an $S$-recurrent page is \emph{uniformly} $S$-transient. 

To get estimates for general graphs with pages that may not be uniformly $S$-transient, a much better understanding of both the possible harmonic functions $h$ and $S$-transience (or perhaps a different transience-type hypotheses) is needed. 
\end{remark}


\begin{thebibliography}{10}

\bibitem{Barlow_graphs}
Martin~T. Barlow.
\newblock {\em Random walks and heat kernels on graphs}, volume 438 of {\em
  London Mathematical Society Lecture Note Series}.
\newblock Cambridge University Press, Cambridge, 2017.

\bibitem{chen_lou_BMvary}
Zhen-Qing Chen and Shuwen Lou.
\newblock Brownian motion on some spaces with varying dimension.
\newblock {\em Ann. Probab.}, 47(1):213--269, 2019.

\bibitem{TC_LSC_IsoInfini}
Thierry Coulhon and Laurent Saloff-Coste.
\newblock Vari\'et\'es riemanniennes isom\'etriques \`a{} l'infini.
\newblock {\em Rev. Mat. Iberoamericana}, 11(3):687--726, 1995.

\bibitem{Emily_thesis}
Emily Dautenhahn.
\newblock {\em Heat kernel estimates on glued spaces}.
\newblock PhD thesis, Cornell University, 2024.

\bibitem{ed_lsc_trans}
Emily Dautenhahn and Laurent Saloff-Coste.
\newblock Hitting probabilities and uniformly {$S$}-transient subgraphs.
\newblock {\em Electron. J. Probab.}, 29:Paper No. 80, 33, 2024.

\bibitem{ed_lsc_mixbdry}
Emily Dautenhahn and Laurent Saloff-Coste.
\newblock Heat kernel estimates on manifolds with ends with mixed boundary
  condition.
\newblock {\em Comm. Anal. Geom.}, 33(3):651--699, 2025.

\bibitem{FK_glue}
Emily Dautenhahn and Laurent Saloff-Coste.
\newblock Faber-{K}rahn inequality and heat kernel estimates on glued graphs.
\newblock {\em Electron. J. Probab.}, 31, 2026.

\bibitem{Delmotte_PHI}
Thierry Delmotte.
\newblock Parabolic {H}arnack inequality and estimates of {M}arkov chains on
  graphs.
\newblock {\em Rev. Mat. Iberoamericana}, 15(1):181--232, 1999.

\bibitem{pd_khe_lsc_analytic}
Persi Diaconis, Kelsey Houston-Edwards, and Laurent Saloff-Coste.
\newblock Analytic-geometric methods for finite {M}arkov chains with
  applications to quasi-stationarity.
\newblock {\em ALEA Lat. Am. J. Probab. Math. Stat.}, 17(2):901--991, 2020.

\bibitem{anthony_bdryHarnack3g}
Anthony Graves-McCleary and Laurent Saloff-Coste.
\newblock The boundary {H}arnack principle and the {$3G$} principle in
  fractal-type spaces.
\newblock {\em Math. Nachr.}, 298(11):3554--3575, 2025.

\bibitem{Gri}
Alexander Grigor'yan.
\newblock The heat equation on noncompact {R}iemannian manifolds.
\newblock {\em Mat. Sb.}, 182(1):55--87, 1991.

\bibitem{parabolic_ends}
Alexander Grigor'yan, Satoshi Ishiwata, and Laurent Saloff-Coste.
\newblock Heat kernel estimates on connected sums of parabolic manifolds.
\newblock {\em J. Math. Pures Appl. (9)}, 113:155--194, 2018.

\bibitem{G_I_LSC_survey}
Alexander Grigor'yan, Satoshi Ishiwata, and Laurent Saloff-Coste.
\newblock Geometric analysis on manifolds with ends.
\newblock In {\em Analysis and partial differential equations on manifolds,
  fractals and graphs}, volume~3 of {\em Adv. Anal. Geom.}, pages 325--343. De
  Gruyter, Berlin, [2021] \copyright 2021.

\bibitem{Poincare_const}
Alexander Grigor'yan, Satoshi Ishiwata, and Laurent Saloff-Coste.
\newblock Poincar\'{e} constant on manifolds with ends.
\newblock {\em Proc. Lond. Math. Soc. (3)}, 126(6):1961--2012, 2023.

\bibitem{GS1}
Alexander Grigor'yan and Laurent Saloff-Coste.
\newblock Heat kernel on connected sums of {R}iemannian manifolds.
\newblock {\em Math. Res. Lett.}, 6(3-4):307--321, 1999.

\bibitem{ag_lsc_extcptset}
Alexander Grigor'yan and Laurent Saloff-Coste.
\newblock Dirichlet heat kernel in the exterior of a compact set.
\newblock {\em Comm. Pure Appl. Math.}, 55(1):93--133, 2002.

\bibitem{ag_lsc_hittingprob}
Alexander Grigor'yan and Laurent Saloff-Coste.
\newblock Hitting probabilities for {B}rownian motion on {R}iemannian
  manifolds.
\newblock {\em J. Math. Pures Appl. (9)}, 81(2):115--142, 2002.

\bibitem{ag_lsc_harnackstability}
Alexander Grigor'yan and Laurent Saloff-Coste.
\newblock Stability results for {H}arnack inequalities.
\newblock {\em Ann. Inst. Fourier (Grenoble)}, 55(3):825--890, 2005.

\bibitem{lsc_ag_ends}
Alexander Grigor'yan and Laurent Saloff-Coste.
\newblock Heat kernel on manifolds with ends.
\newblock {\em Ann. Inst. Fourier (Grenoble)}, 59(5):1917--1997, 2009.

\bibitem{ag_lsc_FKsurgery}
Alexander Grigor'yan and Laurent Saloff-Coste.
\newblock Surgery of the {F}aber-{K}rahn inequality and applications to heat
  kernel bounds.
\newblock {\em Nonlinear Anal.}, 131:243--272, 2016.

\bibitem{ag_ishiwata}
Alexander Grigor?yan and Satoshi Ishiwata.
\newblock Heat kernel estimates on a connected sum of two copies of
  $\mathbb{R}^n$ along a surface of revolution.
\newblock {\em Global and Stochastic Analysis}, 2(1):29--65, 2012.

\bibitem{mathav_jaschek}
Tim Jaschek and Mathav Murugan.
\newblock Geometric implications of fast volume growth and capacity estimates.
\newblock In {\em Analysis and partial differential equations on manifolds,
  fractals and graphs}, volume~3 of {\em Adv. Anal. Geom.}, pages 183--199. De
  Gruyter, Berlin, [2021] \copyright 2021.

\bibitem{lawler_limic}
Gregory~F. Lawler and Vlada Limic.
\newblock {\em Random walk: a modern introduction}, volume 123 of {\em
  Cambridge Studies in Advanced Mathematics}.
\newblock Cambridge University Press, Cambridge, 2010.

\bibitem{lou_li_distorted}
Liping Li and Shuwen Lou.
\newblock Distorted {B}rownian motions on space with varying dimension.
\newblock {\em Electron. J. Probab.}, 27:Paper No. 72, 32, 2022.

\bibitem{lou_BMVD_drift}
Shuwen Lou.
\newblock Brownian motion with drift on spaces with varying dimension.
\newblock {\em Stochastic Process. Appl.}, 129(6):2086--2129, 2019.

\bibitem{lou_distorted}
Shuwen Lou.
\newblock Explicit heat kernels of a model of distorted {B}rownian motion on
  spaces with varying dimension.
\newblock {\em Illinois J. Math.}, 65(2):287--312, 2021.

\bibitem{ooi_bmvd}
Takumu Ooi.
\newblock Heat kernel estimates on spaces with varying dimension.
\newblock {\em Tohoku Math. J. (2)}, 74(2):165--194, 2022.

\bibitem{PSHDuke}
Laurent Saloff-Coste.
\newblock A note on {P}oincar\'{e}, {S}obolev, and {H}arnack inequalities.
\newblock {\em Internat. Math. Res. Notices}, (2):27--38, 1992.

\bibitem{Spitzer}
Frank Spitzer.
\newblock {\em Principles of random walk}, volume~34.
\newblock Springer Science \& Business Media, 2013.

\end{thebibliography}

\end{document}